\pgfplotsset{compat=1.16}
\newtheorem{theorem}{Theorem}[section]
\newtheorem{lemma}[theorem]{Lemma}
\newtheorem{prop}[theorem]{Proposition}
\newtheorem{coro}[theorem]{Corollary}
\theoremstyle{definition}
\newtheorem{definition}[theorem]{Definition}
\theoremstyle{remark}
\newtheorem{remark}[theorem]{Remark}
\theoremstyle{plain}
\numberwithin{equation}{section}
\theoremstyle{plain}
\theoremstyle{plain}
\newcommand{\ttt}[1]{\text{#1}}
\newcommand{\N}{\ensuremath{\mathbb{N}}}
\newcommand{\Z}{\ensuremath{\mathbb{Z}}}
\newcommand{\Q}{\ensuremath{\mathbb{Q}}}
\newcommand{\R}{\ensuremath{\mathbb{R}}}
\newcommand{\listset}[1]{
	\left\lbrace #1 \right\rbrace
}
\newcommand{\setbuilder}[2]{
    \left\lbrace \, #1 \; \middle| \; #2 \, \right\rbrace
}
\newcommand{\eqdef}{\coloneqq}
\DeclarePairedDelimiterX{\floor}[1]{\delimsize\lfloor}{\delimsize\rfloor}{#1}
\newcommand{\Id}{\operatorname{Id}}
\newcommand{\Leb}{\operatorname{Leb}}
\newcommand{\absol}[1]{\left\lvert #1 \right\rvert}
\newcommand{\normof}[1]{\left\| #1 \right\|}
\newcommand{\innerprod}[2]{\left\langle #1, #2 \right\rangle}
\newcommand{\varia}{\operatorname{var}}
\newcommand{\closure}[2]{\operatorname{cl}_{#2} \, #1}
\newcommand{\Distr}[2]{
	\mathcal{D}'_{#2} \left( #1 \right)
}
\newcommand{\cobound}{\operatorname{B}}
\newcommand{\crit}[1]{\operatorname{Crit}\left(#1\right)}
\newcommand{\orbcrit}[1]{\operatorname{Crit}^{\pm}\left(#1\right)}
\newcommand{\rat}{\omega}
\newcommand{\chum}[1]{\widetilde{#1}}
\newcommand{\chdois}[1]{\widehat{#1}}
\newcommand{\auto}[1]{\mathcal{A}_{#1}}
\newcommand{\ratcomp}{\sim}
\begin{document}

\title{Automorphic measures and invariant distributions for circle dynamics}

\author{Edson de Faria}
\address{Instituto de Matem\'atica e Estat\'istica, Universidade de S\~ao Paulo}
\curraddr{Rua do Mat\~ao 1010, 05508-090, S\~ao Paulo SP, Brasil}
\email{edson@ime.usp.br}

\author{Pablo Guarino}
\address{Instituto de Matem\'atica e Estat\'istica, Universidade Federal Fluminense}
\curraddr{Rua Prof. Marcos Waldemar de Freitas Reis, S/N, 24.210-201, Bloco H, Campus do Gragoat\'a, Niter\'oi, Rio de Janeiro RJ, Brasil}
\email{pablo\_\,guarino@id.uff.br}

\author{Bruno Nussenzveig}
\address{Instituto de Matem\'atica e Estat\'istica, Universidade de S\~ao Paulo}
\curraddr{Rua do Mat\~ao 1010, 05508-090, S\~ao Paulo SP, Brasil}
\email{bnussen@usp.br}

\thanks{The first author has been supported by ``Projeto Tem\'atico Din\^amica em Baixas Dimens\~oes'' FAPESP Grant  2016/25053-8, the second author has been partially supported by Conselho Nacional de Desenvolvimento Cient\'ifico e Tecnol\'ogico (CNPq), and the third author has been supported by ``Aplica\c{c}\~oes Multicr\'iticas do C\'irculo e Distribui\c{c}\~oes Invariantes'' FAPESP Grant 2021/04599-0.}

\subjclass[2010]{Primary 37E10; Secondary 37C40}

\keywords{Critical circle maps, automorphic measures, invariant distributions, Denjoy-Koksma inequality}

\begin{abstract} Let $f$ be a $C^{1+\ttt{bv}}$ circle diffeomorphism with irrational rotation number. As established by Douady and Yoccoz in the eighties, for any given $s>0$ there exists a unique \emph{automorphic measure} of exponent $s$ for $f$. In the present paper we prove that the same holds for \emph{multicritical circle maps}, and we provide two applications of this result. The first one, is to prove that the space of \emph{invariant distributions of order $1$} of any given multicritical circle map is one-dimensional, spanned by the unique invariant measure. The second one, is an improvement over the \emph{Denjoy-Koksma inequality} for multicritical circle maps and absolutely continuous observables.
\end{abstract}  

\maketitle

\vspace{-0.5cm}

\section{Introduction} \label{intro}

Smooth one-dimensional dynamical systems can be studied from various viewpoints, such as their topological classification, their smooth rigidity properties, the behaviour of their individual orbits or their measure-theoretic and ergodic properties. A specific class of such systems that has received a great deal of attention in recent years is the class of \emph{multicritical circle maps}.

A \emph{multicritical circle map} is a $C^3$ circle homeomorphism $f: S^1\to S^1$ having $N \geq 1$ critical points (all of which are \emph{non-flat}, see \S \ref{prelim}). We are only interested in maps of this type having no periodic points, in other words, only in those maps that have irrational rotation number. 
The classification os such maps up to topological conjugacy goes back to Yoccoz \cite{Yo3}, who proved that they are always minimal, hence topologically equivalent to a rotation of the circle (see Theorem \ref{yoccoztheorem} below). The smooth rigidity of such maps -- including the preliminary step known as \emph{quasi-symmetric rigidity} -- has been the object of intense research in recent decades; it is by now fairly well-understood, at least in the unicritical case, thanks to the combined efforts of several mathematicians, see \cite{EdF,edsonETDS,dFdM1,dFdM2,GY,GMdM,GdM,KT,KY,yampolsky1,yampolsky2,yampolsky3,yampolsky4}, or the book \cite{dFG:book} and references therein (we note \emph{en passant} that, quite recently, some rigidity results for maps with \emph{more than one} critical point have been established, see \cite{EG2023,ESY2022,Yam2019} and the recent preprint \cite{GorYam2021}). The geometric behaviour of individual orbits of such maps was examined in the recent paper \cite{dFG1}.

From the measure-theoretic viewpoint, multicritical circle maps have also been studied in detail. Having irrational rotation number, they are uniquely ergodic. Their unique invariant probability measure was shown to be purely singular with respect to Lebesgue (Haar) measure by Khanin \cite{Kh1} (see also \cite{GS93}), and later it was shown to have zero Lyapunov exponent in \cite{dFG2}. In \cite{dFG3}, the authors went a bit further and showed that such maps do not admit even $\sigma$-finite absolutely continuous invariant measures. 

In the present paper, we are interested in further ergodic-theoretic properties of multicritical circle maps. In particular, we are interested in the question: ``Does a (minimal) multicritical circle map admit other invariant \emph{distributions} besides its unique invariant probability measure?". The analogous question in a more general setting seems to have been first asked by Katok (for a general reference, see ~\cite{Ka}). However, our main source of inspiration is the remarkable paper by Avila and Kocsard \cite{AK} in which they give a fairly complete answer to the corresponding question for smooth circle \emph{diffeomorphisms}. 

Here, we give a (partial) answer to the above question by relating it (following the paper \cite{NT}, by Navas and Triestino) to the question of existence and uniqueness of so-called \emph{automorphic measures} for multicritical circle maps -- a question to which we give here a full answer.

Given $s\in\mathbb{R}$, an \emph{automorphic measure} of exponent $s$ for $f$ is a Borel probability measure $\nu$ on $S^1$ whose pullback $f^\ast\nu$ is equivalent to $\nu$, with Radon-Nikodym derivative given by $(Df)^s$. This concept is the analogue, for real one-dimensional maps, of the concept of \emph{conformal measure} introduced by Sullivan in \cite{Su1} in the context of rational maps  -- which in turn is inspired by a similar notion introduced by Patterson \cite{Pat} and Sullivan himself \cite{Su2} in the context of Fuchsian and Kleinian groups.

The precise definition of automorphic measure is given in \S \ref{auto} (def. \ref{def automorphic measures}). In the eighties, it was proved by Douady and Yoccoz (but only published some years later, in~\cite{DY} -- see also \cite{dMP}) that, for every minimal $C^{1+\ttt{bv}}$ circle diffeomorphism and every real number $s$, there exists a unique automorphic measure of exponent $s$. In the present paper we prove the following result.

\begin{restatable}[Existence and uniqueness of automorphic measures]{maintheorem}{existunique} \label{exist-unique} Let $f$ be a multicritical circle map. For any given $s \geq 0$ there exists a unique automorphic measure of exponent $s$ for $f$. This measure has no atoms, is supported on the whole circle and it is ergodic under~$f$.
\end{restatable}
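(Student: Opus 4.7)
The plan is to adapt the Douady--Yoccoz strategy from~\cite{DY} to the critical setting, using the real \emph{a priori} bounds for multicritical circle maps---which yield uniform two-sided control $C^{-1}\leq Df^{q_k}(x)\leq C$ for every $x\in S^1$ and every $k\in\mathbb{N}$, where $q_k$ denotes the $k$-th closest-return denominator---as the technical tool in place of the uniform expansion that is unavailable here.

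\emph{Existence.} I introduce the weighted composition operator $L_s\colon C(S^1)\to C(S^1)$, $L_s\phi=(\phi\circ f)(Df)^s$, and study its normalized dual action
\[
\mathcal{T}(\nu)\;=\;\frac{L_s^{*}\nu}{\int(Df)^s\,d\nu}
\]
on probability measures. Since $(Df)^s$ is continuous and vanishes only on the finite critical set, after a harmless regularization $(Df+\varepsilon)^s$ the Schauder--Tychonoff theorem applied on $\mathcal{P}(S^1)$ produces fixed points $\nu_\varepsilon$; any weak-$*$ accumulation point $\nu_*$ as $\varepsilon\downarrow 0$ satisfies $L_s^{*}\nu_*=\lambda\,\nu_*$ for some $\lambda>0$. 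Iterating (and noting that $L_s^n\mathbf{1}=(Df^n)^s$ by the chain rule) gives $\int(Df^n)^s\,d\nu_*=\lambda^n$; taking $n=q_k$ and invoking the real bounds yields $C^{-s}\leq\lambda^{q_k}\leq C^s$ for all $k$, which forces $\lambda=1$ because $q_k\to\infty$. Hence $\nu_*$ is $s$-automorphic.

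\emph{No atoms and full support.} The automorphic relation on singletons reads $\nu_*(\{f(x)\})=(Df(x))^s\,\nu_*(\{x\})$. If $\nu_*(\{x_0\})>0$, then $x_0$ is not in the post-critical orbit of $f$ and its backward orbit avoids critical points; the real bounds then give $\nu_*(\{f^{-q_k}(x_0)\})\geq C^{-s}\nu_*(\{x_0\})$ for every $k$, yielding infinitely many atoms of uniform positive mass---absurd. Thus $\nu_*$ has no atoms, and in particular the critical set is $\nu_*$-null. If $U\subset S^1$ were open with $\nu_*(U)=0$, the identity $\int_{f^{-1}(U)}(Df)^{s}\,d\nu_*=\nu_*(U)=0$ together with the absence of atoms on the critical set forces $\nu_*(f^{-1}(U))=0$; iterating and using the minimality of $f$ (so that $\bigcup_{n\geq 0}f^{-n}(U)=S^1$) gives $\nu_*(S^1)=0$, a contradiction.

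\emph{Ergodicity and uniqueness.} If $\nu_1,\nu_2$ are $s$-automorphic, their average $\nu=(\nu_1+\nu_2)/2$ is also $s$-automorphic; writing $\nu_i=\rho_i\nu$ by Radon--Nikodym and applying the automorphic relation to both sides yields $\rho_i\circ f=\rho_i$ $\nu$-a.e.\ off the critical set. Consequently, uniqueness will follow once every $s$-automorphic probability measure is shown to be $f$-ergodic---and this is the main obstacle, since unique ergodicity of $f$ only constrains $f$-invariant measures, which $\nu_*$ is not when $s>0$. The plan is to exploit the bounded distortion of $f^{q_k}$ along atoms of the $k$-th dynamical partition $\mathcal{P}_k$ of $f$: for any $f$-invariant Borel set $A$, the fact that $f^{q_k}$ permutes atoms of $\mathcal{P}_k$ with $\nu_*$-masses comparable via the automorphic relation and the real bounds forces the density $\nu_*(A\cap I)/\nu_*(I)$ to be approximately constant as $I$ ranges over atoms of $\mathcal{P}_k$. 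Since the partitions $\mathcal{P}_k$ refine to generate the Borel $\sigma$-algebra, a standard martingale/density argument makes this density converge $\nu_*$-a.e.\ to $\chi_A$, which is therefore $\nu_*$-a.e.\ constant; so $\nu_*(A)\in\{0,1\}$. Once ergodicity is in hand, $\rho_i\circ f=\rho_i$ forces $\rho_i$ to be constant $\nu$-a.e., and the normalization gives $\rho_i=1$, so $\nu_1=\nu_2$.
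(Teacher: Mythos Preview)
Your proposal contains a factual error that undermines the existence argument and obscures the main difficulty in the ergodicity argument. You assert that the real bounds give \emph{two-sided} control $C^{-1}\leq Df^{q_k}(x)\leq C$ for all $x\in S^1$. This is false for multicritical circle maps: since $Df$ vanishes on the critical set, $Df^{q_k}$ vanishes at every preimage of a critical point, so no uniform lower bound can hold. The paper only claims (and only uses) the upper bound $Df^{q_k}\leq C_1$ (Lemma~\ref{lemma Dfqn}).

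This breaks your existence proof. First, even with the regularization $(Df+\varepsilon)^s$, nothing prevents the weak-$*$ limits $\nu_\varepsilon$ from accumulating on a Dirac mass at a critical point, in which case $\lambda=\int(Df)^s\,d\nu_*=0$ and the eigen-equation $L_s^*\nu_*=\lambda\nu_*$ degenerates. The paper explicitly flags this obstruction (Remark~\ref{Yoccoz operator}): the Douady--Yoccoz Schauder--Tychonoff scheme fails in the critical case precisely because the normalized operator is ill-defined at such point masses, and your regularization does not circumvent this. Second, even granting $\lambda>0$, your argument that $\lambda=1$ uses the nonexistent lower bound to get $\lambda^{q_k}\geq C^{-s}$. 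The upper bound alone only yields $\lambda\leq 1$. The paper proceeds instead by a Krylov--Bogolyubov-type construction: it first proves (via Jensen and the zero Lyapunov exponent, Lemma~\ref{ZeroLyapExp}) that $\Sigma(x)=\sum_n (Df^n(x))^s$ diverges on a set of positive $\mu$-measure, then uses any such point to form weighted empirical measures whose weak-$*$ limit is automatically automorphic with $\lambda=1$, the boundary term dying because $S_{n_k}(x)\to\infty$.

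Your ergodicity sketch points in the right direction---a density-point argument over dynamical partitions---but the phrase ``bounded distortion of $f^{q_k}$ along atoms of $\mathcal{P}_k$'' hides exactly the difficulty that occupies Sections~\ref{bounds}--\ref{ergodic} of the paper. Naive distortion bounds fail at the (finitely many) critical times; the paper handles them by combining Koebe's distortion principle away from critical times with the local power-law estimate of Proposition~\ref{prop 5.4 dFG21} at critical times, obtaining comparability of the ratios $\nu(I\cap B)/|I|^s$ (Theorem~\ref{most general comparability of ratios}) and then of $\nu(I\cap B)/\nu(I)$ across atoms. Your sketch does not indicate how to cross critical times, and the assertion that ``$f^{q_k}$ permutes atoms with $\nu_*$-masses comparable via the real bounds'' again implicitly appeals to the missing lower bound on $Df^{q_k}$.
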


\subsection{Applications} In Section \ref{distr} of the present paper we provide two applications of Theorem \ref{exist-unique}, that we now describe.

As usual, a \emph{$1$-distribution} is a continuous linear functional defined on the space of $C^1$ real-valued functions of the circle (see \S \ref{prelim inv distr} for precise definitions). As a consequence of Theorem \ref{exist-unique}, we have the following result.

\begin{restatable}[No invariant distributions]{maintheorem}{nodistr} \label{no invar distr} Let $f$ be a multicritical circle map with irrational rotation number and unique invariant measure $\mu$. Then the space $\Distr{f}{1}$ of $f$-invariant distributions of order at most $1$ is spanned by $\mu$, that is,
\[
\Distr{f}{1} = \R\mu\,.
\]
\end{restatable}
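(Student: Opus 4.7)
The plan is to derive Theorem~\ref{no invar distr} from Theorem~\ref{exist-unique} via the correspondence between invariant distributions and automorphic measures developed by Navas and Triestino~\cite{NT}. Let $T \in \Distr{f}{1}$ be arbitrary. Replacing $T$ by $\widetilde T := T - T(1)\mu$ gives an $f$-invariant $1$-distribution with $\widetilde T(1) = 0$, and it suffices to show $\widetilde T \equiv 0$. Thus from the outset we may assume $T(1) = 0$ and aim to prove $T \equiv 0$.

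The key step uses the structure theorem for continuous linear functionals on $C^1(S^1)$: every such $T$ admits a (non-unique) representation $T(\phi) = \int \phi\,d\alpha + \int \phi'\,d\beta$ for signed Borel measures $\alpha,\beta$ on $S^1$. Expanding $T(\phi\circ f) = T(\phi)$ via the chain rule $(\phi\circ f)' = (\phi'\circ f)\cdot Df$ and the standard pushforward identities yields the distributional equation
\[
\int \phi\, d(f_*\alpha - \alpha) \;=\; \int \phi'\, d\bigl(\beta - f_*(Df\cdot\beta)\bigr)\qquad \text{for all } \phi \in C^1(S^1).
\]
Testing against the constant $\phi \equiv 1$ shows $(f_*\alpha - \alpha)(S^1) = 0$, which is automatic. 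Testing against derivatives of $C^1$ functions of zero mean splits the identity into two pieces: the zeroth-order piece, combined with unique ergodicity of $f$ and the normalization $T(1) = 0$, forces $\alpha = 0$; the first-order piece then becomes the functional equation
\[
\beta \;=\; f_*(Df\cdot\beta) \;+\; c\,\Leb
\]
for some $c \in \R$, which is an automorphic-type relation tied to exponent~$1$. By Theorem~\ref{exist-unique}, the positive solutions of the corresponding homogeneous equation are precisely the scalar multiples of the unique automorphic measure $\nu_1$; the freedom in the representation $(\alpha,\beta)$ together with the continuity of $T$ on $(C^1(S^1),\|\cdot\|_{C^1})$ then forces the scalar (and $c$) to vanish, yielding $T \equiv 0$.

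The principal obstacle is the presence of critical points: at each critical point $x$ we have $Df(x) = 0$, so the derivative cocycle degenerates and obstructs the straightforward manipulations available in the diffeomorphism setting of~\cite{DY,NT}. To overcome this, the plan is to exploit (i)~the non-flatness hypothesis, which ensures $Df$ vanishes only polynomially and lets us make distributional sense of $f_*(Df\cdot\beta)$ even when $\beta$ concentrates mass near critical points; and (ii)~the fact, from Theorem~\ref{exist-unique}, that $\nu_1$ is atomless, ergodic, and fully supported on~$S^1$, which rules out non-trivial solutions~$\beta$ supported on critical orbits. In this way the uniqueness conclusion of Theorem~\ref{exist-unique} translates into the one-dimensionality of $\Distr{f}{1}$.
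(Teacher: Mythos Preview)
Your approach is genuinely different from the paper's. The paper (following Navas--Triestino) argues \emph{indirectly}: via the criterion of Proposition~\ref{DUE criterion}, it suffices to show that every $C^1$ function with zero $\mu$-mean is a $C^1$-limit of coboundaries; this is done by an explicit construction (Lemma~\ref{no distr crucial lemma} and Proposition~\ref{no distr crucial prop}), where Hahn--Banach and Corollary~\ref{signed uniqueness} produce the approximants at the level of derivatives. You instead attack $T$ \emph{directly} through the structure of $(C^1)^\ast$. Both routes ultimately rest on the same two ingredients --- Corollary~\ref{signed uniqueness} and the bound $\|Df^{q_n}\|_{C^0}\le C_1$ of Lemma~\ref{lemma Dfqn} --- so the approaches are essentially dual.

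That said, your write-up has two real gaps.

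\textbf{The step ``$\alpha = 0$''.} Unique ergodicity does not do this: it would yield $\alpha\in\R\mu$ only if you knew $f_*\alpha=\alpha$, which you do not --- the displayed identity couples $f_*\alpha-\alpha$ to the derivative of $\eta:=\beta-f_*(Df\cdot\beta)$ and does not separate. What is true (and sufficient) is that, since $T(1)=0$, $T$ factors through $C^1(S^1)/\R\cong\{g\in C^0:\int g\,d\Leb=0\}$ via $\phi\mapsto\phi'$; Hahn--Banach then gives a representation with $\alpha=0$. Invoke this instead.

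\textbf{The step ``$c=0$''.} This is the main gap. With $\alpha=0$, invariance gives $\int g\,d\beta-\int(g\circ f)\,Df\,d\beta=c\int g\,d\Leb$ for all $g\in C^0$, and replacing $\beta$ by $\beta+t\,\Leb$ does not change $c$ (Lebesgue solves the homogeneous equation), so ``freedom in the representation'' cannot kill it. A correct argument: take $g=Df^k$; since $(Df^k\circ f)\,Df=Df^{k+1}$ and $\int Df^k\,d\Leb=1$, the sequence $a_k:=\int Df^k\,d\beta$ satisfies $a_k-a_{k+1}=c$, hence $a_{q_n}=a_0-q_n c$. But $|a_{q_n}|\le\|Df^{q_n}\|_{C^0}\,|\beta|(S^1)\le C_1\,|\beta|(S^1)$ by Lemma~\ref{lemma Dfqn}, forcing $c=0$. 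Then Corollary~\ref{signed uniqueness} gives $\beta\in\R\,\Leb$, whence $T(\phi)=\beta(S^1)\int\phi'\,d\Leb=0$.

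Finally, your ``principal obstacle'' paragraph is a red herring. Since $Df\ge 0$ is continuous, the measure $Df\cdot\beta$ and its pushforward are perfectly well defined for any signed $\beta$; nothing in the reduction of Theorem~\ref{no invar distr} to Theorem~\ref{exist-unique} requires non-flatness. Non-flatness is used only inside the proof of Theorem~\ref{exist-unique} itself.
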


In other words, $f$ admits no invariant distributions of order at most $1$ different from (a scalar multiple of) its unique invariant measure. The proof of Theorem \ref{no invar distr} will be given in \S \ref{prelim inv distr}, and will follow the approach of Navas and Triestino developed in ~\cite{NT} for $C^{1+\ttt{bv}}$-diffeomorphisms. We would like to remark that this approach deals with distributions of order at most $1$. Since multicritical circle maps are assumed to be $C^3$ smooth, it would be desirable to also rule out invariant distributions of order up to $3$. Unfortunately, we do not know how to do this. Moreover, if we consider $C^{\infty}$, or $C^{\omega}$, multicritical circle maps, we do not know how to deal with higher order distributions. Let us be more precise: a $C^{\infty}$ dynamical system is \emph{distributionally uniquely ergodic} if it admits a single invariant distribution (up to multiplication by a constant). In \cite{AK}, Avila and Kocsard proved that every $C^{\infty}$ circle diffeomorphism with irrational rotation number is distributionally uniquely ergodic. We believe that $C^{\infty}$ multicritical circle maps are distributionally uniquely ergodic too but, as already mentioned, we do not know how to prove that. Nevertheless, to the best of our knowledge, Theorem \ref{no invar distr} provides the first examples of dynamics with no invariant distributions of order at most $1$ outside the realm of flows and diffeomorphisms. Finally, we would like to remark that the non-flatness condition on each critical point of $f$ (see \S \ref{prelim} below) is crucial in order to prove Theorem \ref{no invar distr}. Indeed, the following holds.

\begin{restatable}{maintheorem}{examplesHall} \label{TeoCexamples} For any given irrational number $\rho\in(0,1)$ there exists a $C^{\infty}$ homeomorphism $f \colon S^1 \to S^1$, with rotation number $\rho(f)=\rho$, having invariant distributions of order $1$ (different from a scalar multiple of its unique invariant measure).
\end{restatable}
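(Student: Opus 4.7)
The plan is to produce such an example by combining a Hall-type $C^{\infty}$ Denjoy construction with a distribution that is essentially the distributional derivative of the characteristic function of the wandering set.

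First I would invoke the classical construction of Hall: for every irrational $\rho \in (0,1)$, there exists a $C^{\infty}$ orientation-preserving homeomorphism $f \colon S^1 \to S^1$ with rotation number $\rho(f) = \rho$ that admits a countable family of pairwise disjoint open wandering intervals $\{I_n\}_{n \in \Z}$ with $f(I_n) = I_{n+1}$ for every $n$. Writing $I_n = (a_n, b_n)$, one has $f(a_n) = a_{n+1}$, $f(b_n) = b_{n+1}$, and, setting $U \eqdef \bigcup_n I_n$, the estimate $|U| = \sum_n |I_n| < 1$. All derivatives of $f$ vanish at each endpoint $a_n, b_n$, so $f$ has only flat critical points and is not a multicritical circle map in the sense of Section \ref{prelim}. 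The unique $f$-invariant Borel probability measure $\mu$ is supported on the minimal Cantor set $K = S^1 \setminus U$.

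Next I would define a linear functional $T \colon C^1(S^1) \to \R$ by
\[
\langle T, \phi \rangle \eqdef \sum_{n \in \Z} \bigl(\phi(b_n) - \phi(a_n)\bigr) \;=\; \int_U \phi'(t)\, dt.
\]
Since $|\phi(b_n) - \phi(a_n)| \leq |I_n|\,\|\phi'\|_{\infty}$ and $\sum_n |I_n| < 1$, the series converges absolutely and $|\langle T, \phi\rangle| \leq \|\phi\|_{C^1}$, so $T$ is a genuine $1$-distribution on $S^1$. Invariance under $f$ is immediate by reindexing, using $f(a_n) = a_{n+1}$ and $f(b_n) = b_{n+1}$:
\[
\langle T, \phi \circ f \rangle = \sum_{n \in \Z} \bigl(\phi(b_{n+1}) - \phi(a_{n+1})\bigr) = \langle T, \phi \rangle,
\]
so $T \in \Distr{f}{1}$.

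It then remains to show that $T$ is not a scalar multiple of $\mu$. The cleanest route is to prove the stronger assertion that $T$ is not a finite signed Borel measure on $S^1$ at all. Integration by parts shows that $T = -\chi_U'$ as distributions, since for every $\phi \in C^1(S^1)$ one has $\langle T, \phi \rangle = \int_U \phi'(t)\, dt = -\langle \chi_U', \phi \rangle$. If $T$ were a finite signed measure then so would be $\chi_U'$, and this would force $\chi_U$ to be a function of bounded variation on $S^1$; but $\chi_U$ jumps by $+1$ at every $a_n$ and by $-1$ at every $b_n$, so its total variation equals $\sum_n 2 = +\infty$. Hence $T$ is a $1$-distribution of order exactly $1$, and in particular $T \neq c\mu$ for any $c \in \R$.

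The main difficulty in the whole argument lies entirely in the very first step, namely the $C^{\infty}$ Denjoy-type construction itself: one must engineer a smooth homeomorphism $f$ realizing any given irrational rotation number with prescribed wandering intervals, making all derivatives match smoothly at the endpoints of every $I_n$. Once such an $f$ is at hand, the construction and verification of $T$ are essentially formal, and the non-flatness hypothesis on the critical points of $f$ is seen to be indispensable for Theorem \ref{no invar distr}, as this very hypothesis fails for Hall's example.
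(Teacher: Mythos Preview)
Your argument is correct but follows a genuinely different route from the paper's.

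The paper also takes a Hall example $f$, but instead of your $T$ it builds an \emph{atomic} automorphic measure of exponent~$1$: one first shows (by the one-line estimate $\int_I \sum_n Df^n\,d\Leb = \sum_n |f^n(I)| \le 1$) that $\sum_{n\in\Z} Df^n(x)<\infty$ for Lebesgue-a.e.\ $x$ in a wandering interval $I$, then sets $\nu = S^{-1}\sum_n Df^n(x)\,\delta_{f^n(x)}$ and $\langle T,u\rangle = \int u'\,d\nu$. The check that $T\neq c\mu$ is then immediate: for $u$ compactly supported in $I$ with $u'(x)\neq 0$ one gets $\langle T,u\rangle = u'(x)/S\neq 0$ while $\int u\,d\mu = 0$. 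This ties in directly with the automorphic-measure theme of the paper and shows explicitly that uniqueness in Theorem~\ref{exist-unique} fails once non-flatness is dropped.

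Your $T$ is in fact of the same shape, $\langle T,u\rangle = \int u'\,d\nu$, but with the absolutely continuous measure $\nu=\chi_U\,d\Leb$ (also $f$-automorphic of exponent~$1$, since $U$ is $f$-invariant). The trade-off is this: you avoid any lemma on summability of $Df^n$, and you obtain the stronger conclusion that $T$ has order exactly~$1$; on the other hand, testing on bump functions supported in a single $I_n$ gives $0$, so you must argue indirectly that $T$ is not a measure. Two small points to tighten: (i)~your claim that all derivatives of $f$ vanish at every endpoint $a_n,b_n$ is not accurate for Hall's construction (there is a single flat critical point, on the Cantor set), but you never use it; (ii)~the ``jumps'' description of $\chi_U$ is informal since $\chi_U$ has no one-sided limits at the $a_n,b_n$, and more importantly the whole BV argument needs $\Leb(K)>0$ --- if $\Leb(K)=0$ then $\chi_U=1$ a.e.\ and your $T$ vanishes. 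In the standard blow-up construction (insert intervals of total length $\sum\ell_n$ into a unit circle and rescale) one has $\Leb(K)=1/(1+\sum\ell_n)>0$, so this is harmless, but it is worth saying explicitly.
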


The examples of Theorem \ref{TeoCexamples} are those constructed by Hall in \cite{Hall}, see \S \ref{ProofThmC} for the details.

\medskip

As it turns out, Theorem \ref{exist-unique} implies the following improvement over the Denjoy-Koksma inequality for absolutely continuous observables.

\begin{restatable}[Improved Denjoy-Koksma]{maintheorem}{denjoykoksma} \label{denjoy-koksma} Let $f$ be a multicritical circle map with irrational rotation number $\rho$ and unique invariant measure $\mu$, and let $\phi \colon S^1 \to \R$ be absolutely continuous. If $\{q_n\}$ is the sequence of denominators for the rational approximations of $\rho$, we have that
\begin{equation*}
q_n\left\|\frac{1}{q_n}\sum_{i = 0}^{q_n - 1} \, \phi \circ f^i-\int_{S^1} \phi \, d\mu\right\|_{C^0(S^1)} \longrightarrow 0\quad\mbox{as $n$ goes to $\infty$.}
\end{equation*}
\end{restatable}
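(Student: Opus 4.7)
My plan is to reduce to the case of $C^1$ test functions by a density argument in variation norm, and then invoke Theorem~\ref{no invar distr} to rule out a distributional obstruction. Setting $S_n^\phi(x) \eqdef \sum_{i=0}^{q_n - 1} \phi \circ f^i(x) - q_n \int_{S^1} \phi \, d\mu$, the conclusion becomes $\|S_n^\phi\|_{C^0(S^1)} \to 0$. Since $f$ is minimal and uniquely ergodic, the classical Denjoy--Koksma inequality $\|S_n^\psi\|_{C^0(S^1)} \leq V(\psi)$ holds for every function $\psi$ of bounded variation (its combinatorial proof relies only on the closest-return structure of the rotation number). Every absolutely continuous $\phi \colon S^1 \to \R$ is the primitive of some $\phi' \in L^1(S^1)$ of mean zero; mollifying $\phi'$ on $S^1$ yields a sequence $\psi_k \in C^\infty(S^1)$ with $V(\phi - \psi_k) = \|\phi' - \psi_k'\|_{L^1(S^1)} \to 0$ (periodicity of the primitives is automatic because convolution preserves the mean). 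Applying Denjoy--Koksma to $\phi - \psi_k$ together with the triangle inequality gives
\[
\|S_n^\phi\|_{C^0(S^1)} \;\leq\; \|S_n^{\psi_k}\|_{C^0(S^1)} + V(\phi - \psi_k),
\]
so it suffices to prove $\|S_n^\psi\|_{C^0(S^1)} \to 0$ for every $\psi \in C^1(S^1)$.

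For the $C^1$ case, I argue by contradiction. Suppose there exist $\psi \in C^1(S^1)$, $\epsilon > 0$, a subsequence $\{n_k\}$, and points $x_k \in S^1$ with $|S_{n_k}^\psi(x_k)| \geq \epsilon$ for all $k$. Define linear functionals $T_k \colon C^1(S^1) \to \R$ by $T_k(\eta) \eqdef S_{n_k}^\eta(x_k)$. Denjoy--Koksma now gives $|T_k(\eta)| \leq V(\eta) \leq \|\eta\|_{C^1(S^1)}$, so $\{T_k\}$ is norm-bounded in the dual of the separable Banach space $C^1(S^1)$; by Banach--Alaoglu, a subsequence converges weak-$*$ to a $1$-distribution $T$ on $S^1$. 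A direct telescoping computation, using the $f$-invariance of $\mu$, yields
\[
T_k(\eta \circ f) - T_k(\eta) \;=\; \eta(f^{q_{n_k}}(x_k)) - \eta(x_k),
\]
and since $f$ is topologically conjugate to a rigid rotation (Yoccoz), $\sup_{x \in S^1} |f^{q_n}(x) - x| \to 0$; combined with $\eta \in C^1$, the right-hand side tends to $0$. Hence the weak-$*$ limit $T$ is $f$-invariant, i.e., $T \in \Distr{f}{1}$.

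By Theorem~\ref{no invar distr}, $T = c\mu$ for some $c \in \R$. Evaluating at the constant function $\mathbf{1}$ gives $T_k(\mathbf{1}) = q_{n_k} - q_{n_k} = 0$ for every $k$, so $c = T(\mathbf{1}) = 0$ and therefore $T \equiv 0$. This contradicts $|T(\psi)| = \lim_k |T_k(\psi)| \geq \epsilon > 0$, completing the proof.

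The essential ingredient is Theorem~\ref{no invar distr}: without the triviality of $\Distr{f}{1}$ modulo $\R\mu$, the weak-$*$ limit $T$ could in principle carry nontrivial mass and provide a genuine obstruction to upgrading the Denjoy--Koksma $O(1)$ bound to $o(1)$. The remaining ingredients --- the validity of Denjoy--Koksma for minimal circle homeomorphisms, the weak-$*$ sequential compactness in $(C^1(S^1))^*$, and the $C^\infty$-density in $W^{1,1}(S^1)$ --- are standard, so the main conceptual difficulty is absorbed into the already-established Theorems~\ref{exist-unique} and~\ref{no invar distr}.
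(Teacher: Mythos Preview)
Your proof is correct, but the route differs from the paper's in two respects.

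First, the paper does \emph{not} reduce the absolutely continuous case to the $C^1$ case by mollification. Instead it gives a direct argument for $\phi\in AC(S^1)$ via an $L^1$ lemma (Lemma~\ref{DK key lemma}): for any $v\in L^1(\Leb)$ with $\int v\,d\Leb=0$ there exist $w_n\in L^1$ with $(w_n\circ f)\,Df-w_n\to v$ in $L^1$. Applying this to $v=\phi'$ one writes $\phi=\xi+(\psi\circ f)-\psi+\int\phi\,d\mu$ with $\varia(\xi)$ small, and concludes by classical Denjoy--Koksma plus $\|f^{q_n}-\Id\|_{C^0}\to 0$. This proof uses only the ergodicity of Lebesgue measure (Theorem~\ref{ergodicity}), not the full strength of Theorem~\ref{no invar distr}.

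Second, for the $C^1$ case the paper argues ``constructively'': by Theorem~\ref{no invar distr} and the criterion of Proposition~\ref{DUE criterion}, $\phi-\int\phi\,d\mu$ is a $C^1$-limit of coboundaries $u\circ f-u$, and one estimates directly using Denjoy--Koksma on the small remainder and a telescoping sum on the coboundary part. Your argument is the dual ``obstructive'' one: assuming failure, you extract via Banach--Alaoglu a weak-$*$ limit $T\in\Distr{f}{1}$ with $T(\mathbf{1})=0$ but $T(\psi)\neq 0$, contradicting Theorem~\ref{no invar distr}. The two are equivalent through Hahn--Banach; yours makes the distributional obstruction explicit, while the paper's second proof is more economical in that it only invokes ergodicity of Lebesgue rather than the full $\Distr{f}{1}=\R\mu$.
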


The proof of Theorem \ref{denjoy-koksma} will be given in \S \ref{ProofThmD}, following the very same lines as that of Avila and Kocsard \cite[Section 3]{AK} and Navas \cite[Section 2]{NavasIMRN23} for circle diffeomorphisms.

\medskip

The paper is organized as follows: in \S \ref{prelim}, we present a recap of some topics from the theory of critical circle maps. In \S \ref{auto}, we define automorphic measures of exponent $s \geq 0$ for $f$, and explore some of their elementary properties; we remark that, at that point in the paper, it is still not clear whether $f$-automorphic measures actually exist. This is done in \S \ref{exist}, where we show that $f$-automorphic measures indeed exist for all $s > 0$. In \S \ref{bounds}, we obtain fundamental bounds on $f$-automorphic measures, which may themselves be of interest in future works. In \S \ref{ergodic}, we use said bounds to show that any $f$-automorphic measure is ergodic with respect to $f$, and as a consequence, easily derive the uniqueness part of Theorem \ref{exist-unique}. Finally, in \S \ref{distr}, we prove that Theorem \ref{exist-unique} implies both Theorem \ref{no invar distr} and Theorem \ref{denjoy-koksma}, and we briefly explain the proof of Theorem \ref{TeoCexamples}.

\section{Multicritical circle maps} \label{prelim}

Let us now define the maps which are the main object of study in the present paper. We start with the notion of {\it non-flat critical point\/}.

\begin{definition}\label{nonflat} We say that a critical point $c$ of a one-dimensional $C^3$ map $f$ is \emph{non-flat} of degree $d > 1$ if there exists a neighborhood $W_c$ of the critical point and a $C^3$ diffeomorphism
$\phi_c \colon W_c \to \phi_c(W_c) \subset \R$ such that $\phi_c(c) = 0$ and, for all $x \in W_c$,
\[
	f(x) = f(c) + \phi_c(x) \, \absol{\phi_c(x)}^{d-1}\,.
\] 
\end{definition}

This local form easily implies the following estimate (see ~\cite[ch. 5]{dFG:book}).

\begin{prop} \label{prop 5.4 dFG21} Let $c$ be a non-flat critical point of degree $d$ of a one-dimensional $C^3$ map $f$. There exists an interval $U = U_c \subset W_c$ that contains $c$ such that, for any non-empty interval $J \subset U$ and $x \in J$,
\begin{equation} \label{eq prop 5.4 dFG21}
D f(x) \leq 3d\,\frac{\absol{f(J)}}{\absol{J}}\,,
\end{equation}where $|J|$ denotes the Euclidean length of an interval $J$.
\end{prop}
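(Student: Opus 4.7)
The plan is to use the local normal form directly, computing $Df(x)$ and $\absol{f(J)}/\absol{J}$ in terms of $\phi_c$, and reducing the desired inequality to a purely one-variable estimate on the model map $g(t) = t\absol{t}^{d-1}$.

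First I would differentiate: from $f(x) = f(c) + g(\phi_c(x))$ and $g'(t) = d\absol{t}^{d-1}$, one gets
\[
Df(x) \;=\; d\,\absol{\phi_c(x)}^{d-1}\, D\phi_c(x).
\]
For a non-empty interval $J \subset W_c$, let $I = \phi_c(J)$; this is an interval because $\phi_c$ is a diffeomorphism. The mean value theorem yields $\absol{J} = \absol{I}/D\phi_c(\xi)$ for some $\xi \in J$, and since $g$ is monotone non-decreasing, $\absol{f(J)} = \absol{g(I)}$. Hence
\[
\frac{Df(x)}{\absol{f(J)}/\absol{J}} \;=\; \frac{D\phi_c(x)}{D\phi_c(\xi)} \,\cdot\, \frac{d\,\absol{\phi_c(x)}^{d-1}}{\absol{g(I)}/\absol{I}}.
\]

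Next I would shrink $W_c$ to a smaller neighborhood $U = U_c$ of $c$ on which $\phi_c$ has controlled distortion, say $D\phi_c(x)/D\phi_c(\xi) \leq 3/2$ for all $x,\xi \in U$. This is possible because $D\phi_c$ is continuous at $c$ and $D\phi_c(c) \neq 0$ (as $\phi_c$ is a diffeomorphism). With this choice of $U$, the whole proposition reduces to bounding the second factor by $2d$.

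This is a purely elementary one-variable question about $g$, and it is the only step requiring real work. Setting $y_0 = \phi_c(x) \in I$, I would split into two cases. If $0 \notin \interior I$, one may assume $I = [a,b] \subset [0,\infty)$, giving $\absol{g(I)}/\absol{I} = (b^d - a^d)/(b-a)$ and $d\absol{y_0}^{d-1} \leq d\,b^{d-1}$; writing $t = a/b \in [0,1]$, the factor becomes $d(1-t)/(1-t^d) \leq d$, its maximum being attained at $t=0$. If $0 \in \interior I$, one may assume (by reflecting if necessary) $I = [-\absol{a},b]$ with $b \geq \absol{a}$, so $\absol{g(I)}/\absol{I} = (b^d + \absol{a}^d)/(b+\absol{a})$ and $d\absol{y_0}^{d-1} \leq d\,b^{d-1}$; setting $s = \absol{a}/b \in [0,1]$, the factor becomes $d(1+s)/(1+s^d) \leq 2d$, using the trivial bound $(1+s)/(1+s^d) \leq 2$ on $[0,1]$. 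Combining the two factors gives $Df(x) \leq (3/2)(2d)\,\absol{f(J)}/\absol{J} = 3d\,\absol{f(J)}/\absol{J}$, as desired. The slightly harder of the two cases is the one where $c$ lies in the interior of $J$, but even there the inequality collapses to a one-line bound.
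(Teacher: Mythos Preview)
Your proposal is correct. Note, however, that the paper does not actually prove this proposition: it simply records that ``This local form easily implies the following estimate'' and defers to \cite[ch.~5]{dFG:book}. So there is no in-paper proof to compare against.

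Your argument is the natural one and presumably close to what the cited book does: pass through the chart $\phi_c$ to reduce to the model $g(t)=t\lvert t\rvert^{d-1}$, absorb the distortion of $\phi_c$ into the factor $3/2$ by shrinking $W_c$ to $U$, and then handle the model by the elementary case split according to whether $0$ lies in $\phi_c(J)$. Both one-variable estimates are correct for arbitrary real $d>1$ (in the first case $t^d\le t$ on $[0,1]$, in the second $(1+s)/(1+s^d)\le 2$ on $[0,1]$), and the product $(3/2)\cdot 2d$ lands exactly on the stated constant $3d$. One small remark: if $D\phi_c<0$ then $Df\le 0$ near $c$ and the inequality is trivial, so your implicit assumption $D\phi_c>0$ is harmless.
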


\begin{definition} \label{def:multicritic} A \emph{multicritical circle map} is an orientation-preserving $C^3$ circle homeomorphism having finitely many critical points, all of which are non-flat.
\end{definition}

Being an orientation-preserving circle homeomorphism, a multicritical circle map $f$ has a well defined rotation number. We will focus on the case that $f$ has no periodic orbits (\emph{i.e.}, $\rho(f) \notin \Q$). As it turns out, these maps have no wandering intervals. More precisely, we have the following fundamental result.

\begin{theorem}\label{yoccoztheorem} Let $f$ be a multicritical circle map with irrational rotation number $\rho$. Then $f$ is topologically conjugate to the rigid rotation $R_{\rho}$, i.e., there exists a homeomorphism $h \colon S^{1} \to S^{1}$ such that $h \circ f = R_{\rho} \circ h.$
\end{theorem}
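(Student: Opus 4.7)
The plan is to combine Poincaré's classification theorem with the absence of wandering intervals; the latter being the genuinely difficult input due to the presence of critical points.

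First I would invoke Poincaré's theorem: any orientation-preserving circle homeomorphism with irrational rotation number $\rho$ admits a continuous, surjective, degree-one monotone map $h\colon S^1\to S^1$ satisfying $h\circ f=R_\rho\circ h$. The construction of $h$ is classical and proceeds by lifting $f$ to $\widetilde{f}\colon\mathbb{R}\to\mathbb{R}$ and defining $\widetilde{h}(x)=\lim_{n\to\infty}\bigl(\widetilde{f}^n(x)-n\rho\bigr)$ after reordering along an orbit; irrationality of $\rho$ guarantees that this semi-conjugacy is well defined and non-constant. A standard observation is that $h$ fails to be injective precisely on the union of the connected components of $h^{-1}(y)$ as $y$ ranges over $S^1$, and that the non-degenerate preimages, if any, are necessarily \emph{wandering intervals} of $f$ (their forward iterates are pairwise disjoint). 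Hence the theorem reduces to showing that $f$ has no wandering interval.

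Next I would argue, by contradiction, that $f$ admits no wandering interval. Suppose $I\subset S^1$ is such an interval; since its forward iterates are pairwise disjoint we must have $|f^n(I)|\to 0$, and by minimality of the rotation the orbit $\{f^n(I)\}_{n\geq 0}$ accumulates on every point of $S^1$, in particular on each of the finitely many critical points of $f$. The core of the argument is a cross-ratio/distortion estimate: for a $C^3$ circle homeomorphism whose critical points are all non-flat, the distortion of $f^n$ restricted to suitable enlargements of $I$ can be controlled in terms of the combinatorics of the return times to fixed neighborhoods of the critical set, using the pointwise bound in Proposition \ref{prop 5.4 dFG21} together with the Schwarzian/cross-ratio inequality for $C^3$ maps with non-flat critical points. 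This is precisely Yoccoz's theorem from \cite{Yo3}, and a full modern exposition is given in \cite[Chapter 5]{dFG:book}.

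The conclusion is then standard: once we know $f$ has no wandering intervals, each fibre $h^{-1}(y)$ is either a single point or is contained in the grand orbit of a wandering interval; the latter being empty, $h$ is injective, hence a homeomorphism, and the semi-conjugacy $h\circ f=R_\rho\circ h$ becomes a topological conjugacy.

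The main obstacle, as expected, is the absence of wandering intervals. In the smooth setting without critical points (Denjoy's theorem) a $C^{1+\mathrm{bv}}$ hypothesis suffices and the proof is a short Denjoy-type distortion argument. The presence of finitely many non-flat critical points destroys uniform expansion bounds and forces one to use Yoccoz's much finer cross-ratio analysis, in which the non-flatness condition of Definition \ref{nonflat} plays an essential role. Since this is by now a textbook result, I would simply appeal to Yoccoz's theorem \cite{Yo3} (or \cite[Chapter~5]{dFG:book}) rather than reproduce the argument in detail.
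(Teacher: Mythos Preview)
Your proposal is correct and matches the paper's treatment: the paper does not prove this result either, but simply records that it was proved by Yoccoz in \cite{Yo3} and refers to \cite{dFG:book} for a modern exposition. Your sketch of the reduction via Poincar\'e's semi-conjugacy to the absence of wandering intervals is the standard route and is exactly what underlies Yoccoz's theorem; the only minor slip is the chapter reference---the paper points to Chapter~6 of \cite{dFG:book} rather than Chapter~5.
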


Theorem \ref{yoccoztheorem} was proved by Yoccoz in \cite{Yo3}, see also \cite[ch. 6]{dFG:book}.

\subsection{The Koebe distortion principle} \label{sec Koebe}

Given two circle intervals $M\subset T\subset S^{1}$ with $M$ compactly contained in the interior of $T$ (written $M \Subset T$), we denote by $L$ and $R$ the two connected components of $T\setminus M$. The {\it space of $M$ inside $T$} is defined to be the number
\begin{equation} \label{eq space}
	\tau = \min \listset{\frac{\absol{L}}{\absol{M}}, \frac{\absol{R}}{\absol{M}}} \, .
\end{equation}

Given circle intervals $M, T$ with $M \Subset T$ and $k \geq 1$ such that $f^k \colon T \to f^k(T)$ is a $C^1$ diffeomorphism onto its image, one can bound the distortion of $f^k$ inside $M$ independently of $k$ as long as the intermediate images $T, f(T), \cdots, f^{k-1}(T)$ satisfy a mild summability condition and the space of $f^k(M)$ inside $f^k(T)$ is bounded from below independently of $k$. This is the content of the Koebe distortion principle, and, as one can expect, it is of fundamental importance in controlling the geometric behavior of large iterates of the map $f$.

\begin{lemma}[Koebe distortion principle] \label{Koebe} For each $\ell, \tau > 0$ and each multicritical circle map $f \colon S^1 \to S^1$ there exists a constant $K = K(\ell, \tau, f) > 1$ with the following property. If $k \geq 1$, $M \subset T \subset S^1$ are intervals, with $M$ compactly contained in the interior of $T$, are such that the intervals $T, f(T), \dots, f^{k-1}(T)$ contain no critical point of $f$,
\begin{equation} \label{eq1 Koebe}
	\sum_{j = 0}^{k-1} \, \absol{f^j(T)} \leq \ell
\end{equation}
and the space of $f^k(M)$ inside $f^k(T)$ is at least $\tau$, then
\begin{equation} \label{eq2 Koebe}
	K^{-1} \leq \frac{D f^k(x)}{D f^k(y)} \leq K \quad \ttt{for all } x, y \in M \, .
\end{equation}
\end{lemma}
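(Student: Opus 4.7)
The plan is to follow the classical real Koebe principle for $C^3$ interval maps with non-flat critical points, the main technical tool being a cross-ratio distortion estimate. Write $T_j = f^j(T)$ and $M_j = f^j(M)$ for $0 \le j \le k$, and denote by $b(A, B) = |A|\,|B|/(|L|\,|R|)$ the cross-ratio of $A \Subset B$, where $L$ and $R$ are the two components of $B \setminus A$. Up to constants, having space of $A$ in $B$ bounded below is equivalent to $b(A, B)$ being bounded above, so (\ref{eq space}) and cross-ratios are interchangeable for our purposes.

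The central ingredient is the \emph{cross-ratio distortion lemma}: there exists a constant $C = C(f) > 0$ such that, for every pair of intervals $M' \Subset T'$ with $T' \cap \crit{f} = \emptyset$,
$$\frac{b(f(M'), f(T'))}{b(M', T')} \;\ge\; 1 - C|T'|.$$
The crucial feature is that $C$ does not depend on the distance from $T'$ to the critical set; this uniformity is obtained by decomposing $T'$ into a piece near the closest critical point, treated using the non-flat normal form of Definition~\ref{nonflat} (in the spirit of Proposition~\ref{prop 5.4 dFG21}), and a piece bounded away from all critical points, treated by standard Schwarzian/$C^3$ estimates. This lemma is the heart of the argument and is established in~\cite[ch.~5]{dFG:book}.

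Applying this to each $f|_{T_j}$ and taking products,
$$b(M_k, T_k) \;\ge\; \prod_{j=0}^{k-1}(1 - C|T_j|)\, b(M, T) \;\ge\; e^{-C'\ell}\, b(M, T),$$
for a suitable $C' = C'(f) > 0$, by the summability hypothesis (\ref{eq1 Koebe}). The space condition at the final level $k$ bounds $b(M_k, T_k)$ from above in terms of $\tau$, hence $b(M, T) \le B(\tau, \ell, f)$. The same reasoning applied to each intermediate iterate $f^{k-j}: T_j \to T_k$ shows that the space of $M_j$ inside $T_j$ is bounded below by a uniform constant $\tau' = \tau'(\tau, \ell, f) > 0$.

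Finally, for arbitrary $x, y \in M$ I would apply the cross-ratio distortion estimate to the pair $[x, y'] \Subset T$ and take the limit $y' \to x$; the resulting inequality expresses $Df^k(x)$ in terms of $|f^k(T)|/|T|$ and the two expansion ratios on the components of $T \setminus \{x\}$, all up to a factor in $[e^{-C'\ell}, e^{C'\ell}]$. Writing the analogous expression for $y$, dividing, and using the uniform space bounds (so that the expansion ratios for $x$ and $y$ differ by a bounded factor) yields the desired estimate $K^{-1} \le Df^k(x)/Df^k(y) \le K$. The main obstacle is the cross-ratio distortion lemma itself: the classical Koebe proof via bounded Schwarzian derivative breaks down here because $Sf$ blows up near critical points, forcing one to work directly with the non-flat normal form of Definition~\ref{nonflat} in order to extract a constant $C$ that does not deteriorate as $T'$ approaches the critical set.
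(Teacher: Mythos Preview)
The paper does not supply its own proof of this lemma: immediately after the statement it simply refers the reader to \cite[p.~295]{dMvS}. Your sketch is a correct outline of precisely the classical argument found there (and in \cite[ch.~5]{dFG:book}): a uniform one-step cross-ratio distortion bound---obtained by combining the $C^3$ Schwarzian estimate away from $\crit{f}$ with the non-flat normal form of Definition~\ref{nonflat} near it---is multiplied along the orbit using the summability hypothesis \eqref{eq1 Koebe} to propagate the space condition from level $k$ back to every intermediate level, after which a degenerate cross-ratio (letting the inner interval shrink to a point) converts uniform space into the derivative-ratio bound \eqref{eq2 Koebe}.
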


A proof of the Koebe distortion principle can be found in ~\cite[p.~295]{dMvS}. 

\begin{remark} \label{mult intersec and summability} Given a family of intervals $\mathcal{F}$ on $S^{1}$ and a positive integer $m$, we say that $\mathcal{F}$ has {\it multiplicity of intersection 
at most $m$\/} if each $x\in S^{1}$ belongs to at most $m$ elements of $\mathcal{F}$. For our purposes, the following (elementary) observation relating the hypotheses of the Koebe distortion principle to multiplicity of intersection shall be crucial: if the family $T, f(T), \dots, f^{k-1}(T)$ has multiplicity of intersection at most $m$, then \eqref{eq1 Koebe} holds with $\ell = m$. This observation also holds in the context of arbitrary finite measures on $S^1$: if $\nu$ is a finite measure on the circle, $m \geq 1$ and $\mathcal{F}$ is a family of circle intervals with intersection multiplicity at most $m$, then
\begin{equation*}
	\sum_{I \in \mathcal{F}} \, \nu(I) \leq m \, \nu(S^1)\,.
\end{equation*}
\end{remark}

\subsection{Combinatorics and real bounds} \label{prelim comb real bounds}

Throughout this paper, $f \colon S^1 \to S^1$ will be a $C^3$ multicritical circle map with irrational rotation number. Furthermore, $N \geq 1$ shall be the number of critical points of $f$, $\crit{f} = \listset{c_1, \dots, c_N}$ shall be the set of critical points of $f$, and $d_1, \dots, d_N$ their corresponding criticalities.

Let $\rho$ be the rotation number of $f$. As we know, it has a infinite continued fraction expansion, say
\begin{equation*}
      \rho(f)= [a_{0} , a_{1} , \cdots ]=   
      \cfrac{1}{a_{0}+\cfrac{1}{a_{1}+\cfrac{1}{ \ddots} }} \ .
    \end{equation*}
Truncating the expansion at level $n-1$, we obtain a sequence
of fractions $p_n/q_n$ which are called the \emph{convergents} of the irrational $\rho$.
$$
\frac{p_n}{q_n}\;=\;[a_0,a_1, \cdots ,a_{n-1}]\;=\;\dfrac{1}{a_0+\dfrac{1}{a_1+\dfrac{1}{\ddots\dfrac{1}{a_{n-1}}}}}\ .
$$
Since each $p_n/q_n$ is the best possible approximation to $\rho$ by fractions with denominator at most $q_n$,
we have
\begin{equation*}
\text{If} \hspace{0.4cm} 0<q<q_n \hspace{0.3cm} \text{then} \hspace{0.3cm} \left|\rho-\frac{p_n}{q_n}\right|<\left|\rho-\frac{p}{q}\right|, 
\hspace{0.3cm} \text{ for any $p\in\Z$.}
\end{equation*} 
The sequence of numerators satisfies
\begin{equation*}
 p_0=0,  \hspace{0.4cm} p_{1}=1, \hspace{0.4cm} p_{n+1}=a_{n}p_{n}+p_{n-1} \hspace{0.3cm} \text{for $n \geq 1$}.
\end{equation*}
Analogously, the sequence of the denominators, which we call the \emph{return times}, satisfies 
\begin{equation*}
 q_{0}=1, \hspace{0.4cm} q_{1}=a_{0}, \hspace{0.4cm} q_{n+1}=a_{n}q_{n}+q_{n-1} \hspace{0.3cm} \text{for $n \geq 1$} .
\end{equation*}

For each point $x \in S^1$ and each non-negative integer $n$, let $I_{n}(x)$ be the closed interval with endpoints 
$x$ and $f^{q_n}(x)$ containing  $f^{q_{n+2}}(x)$ (note that $I_{n}(x)$ contains no other iterate $f^{j}(x)$ with $1 \leq j \leq q_{n}-1$). We write $I_{n}^{j}(x)=f^{j}(I_{n}(x))$ for all $j$ and $n$.

\begin{lemma}\label{lemapartition} For each $n\geq 0$ and each $x\in S^1$, the collection of intervals
\[
 \mathcal{P}_n(x)\;=\; \left\{f^i(I_n(x)):\;0\leq i\leq q_{n+1}-1\right\} \;\bigcup\; \left\{f^j(I_{n+1}(x)):\;0\leq j\leq q_{n}-1\right\} 
\]
is a partition of the unit circle ({\it modulo} endpoints), called the {\it $n$-th dynamical partition\/} associated to the point $x$.
\end{lemma}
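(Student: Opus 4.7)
The plan is to reduce the statement to the case of the rigid rotation $R_\rho$ by means of the Yoccoz conjugacy and then argue by induction on $n$.

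By Theorem~\ref{yoccoztheorem} there exists an orientation-preserving homeomorphism $h \colon S^1 \to S^1$ with $h \circ f = R_\rho \circ h$, hence $h \circ f^k = R_\rho^k \circ h$ for every $k \in \Z$. Because $h$ preserves orientation, it sends each closed arc to a closed arc with matching endpoints and circular orientation; in particular $h(I_n(x)) = I_n(h(x))$, where the right-hand side is defined analogously with $R_\rho$ in place of $f$, and $h$ carries $\mathcal{P}_n(x)$ bijectively onto $\mathcal{P}_n(h(x))$. Since being a partition modulo endpoints is preserved by homeomorphisms, it suffices to prove the lemma for $R_\rho$.

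For $R_\rho$ I would identify $S^1$ with $\R/\Z$ and, by translation invariance, take $x = 0$. The base case $n = 0$ is immediate: $\mathcal{P}_0(0)$ consists of the $a_0$ translates $[k\rho, (k+1)\rho]$ of $I_0(0) = [0,\rho]$ ($0 \leq k \leq a_0 - 1$) together with $I_1(0) = [a_0\rho, 1]$, which tile $S^1$ (total length $a_0\rho + (1 - a_0\rho) = 1$). For the inductive step, suppose $\mathcal{P}_n(0)$ is a partition modulo endpoints. The $q_n$ short intervals of $\mathcal{P}_n(0)$ (the translates of $I_{n+1}(0)$) are already $q_n$ of the $q_{n+2}$ long intervals of $\mathcal{P}_{n+1}(0)$, so the task reduces to showing that each long interval $f^i(I_n(0))$ of $\mathcal{P}_n(0)$ is refined by $\mathcal{P}_{n+1}(0)$ into exactly $a_{n+1}$ translates of $I_{n+1}(0)$ together with one translate of $I_{n+2}(0)$. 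Using the recurrence $q_{n+2} = a_{n+1}q_{n+1} + q_n$, one checks that the points $(q_n + jq_{n+1})\rho$ for $j = 0, 1, \ldots, a_{n+1}$ lie monotonically along $I_n(0)$, going from the endpoint $q_n\rho$ to the interior point $q_{n+2}\rho$, and thus subdivide $I_n(0)$ into $a_{n+1}$ sub-arcs of length $|q_{n+1}\rho - p_{n+1}|$ and one residual sub-arc of length $|q_{n+2}\rho - p_{n+2}|$, which by translation invariance of $R_\rho$ are the appropriate iterates of $I_{n+1}(0)$ and $I_{n+2}(0)$. Applying $R_\rho^i$ transports this refinement to every long interval of $\mathcal{P}_n(0)$.

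The hardest part is the combinatorial bookkeeping: one must verify that the subdivision produced above exhausts $\mathcal{P}_{n+1}(0)$ with indices ranging exactly over $0 \leq i \leq q_{n+2} - 1$ and $0 \leq j \leq q_{n+1} - 1$, rather than some other range equivalent modulo the dynamics. This requires tracking the alternation of sign of $q_n\rho - p_n$ with $n$, so that the successive subdivisions of $I_n(0)$ are traversed in the correct direction at each level of the induction.
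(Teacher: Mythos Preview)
Your argument is correct. The paper does not supply a proof of this lemma, referring instead to \cite[ch.~6]{dFG:book}; the proof found there (and in most standard references) carries out the same inductive refinement you describe, but works directly for an arbitrary circle homeomorphism with irrational rotation number rather than first passing to the rigid rotation via a conjugacy. Your reduction through Theorem~\ref{yoccoztheorem} is a legitimate shortcut within the paper's logical order, since Yoccoz's theorem is stated beforehand; however, it is worth noting that the partition lemma is a purely combinatorial statement about the cyclic order of orbit points, and holds already for circle homeomorphisms with wandering intervals (where only Poincar\'e's semi-conjugacy to $R_\rho$ is available). The direct argument has the advantage of making this generality visible and of avoiding any appearance of circularity, since most proofs of the absence of wandering intervals themselves rely on the dynamical partition. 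Your sketch of the inductive step and the index bookkeeping (tracking that the indices $i + q_n + j q_{n+1}$ exhaust $\{q_n,\dots,q_{n+2}-1\}$ as $i$ and $j$ range over the stated intervals) is accurate.
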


For a proof of this lemma, see ~\cite[ch. 6]{dFG:book}. Note that, for each $n$, the partition $\mathcal{P}_{n+1}(x)$ is a (non-strict) refinement of $\mathcal{P}_{n}(x)$, while the partition 
$\mathcal{P}_{n+2}(x)$ is a strict refinement of $\mathcal{P}_{n}(x)$.  

\begin{theorem}[Real Bounds] \label{real bounds} There exists a constant $C = C(f) > 1$, depending only on $f$, such that the following holds for every critical point $c$ of $f$. For all $n \geq 0$ and for each pair of adjacent atoms $I, J \in \mathcal{P}_n(c)$ we have
\begin{equation} \label{eq real bounds}
C^{-1} \absol{J} \leq \absol{I} \leq C \absol{J} \, .
\end{equation}
\end{theorem}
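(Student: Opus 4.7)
The plan is to prove the Real Bounds by induction on $n$, combining Koebe (Lemma~\ref{Koebe}), the non-flatness estimate (Proposition~\ref{prop 5.4 dFG21}), and the combinatorial structure of the dynamical partition (Lemma~\ref{lemapartition}). The base case, covering small values of $n$, is handled separately: since $f$ is conjugate to an irrational rotation by Theorem~\ref{yoccoztheorem} and has no wandering intervals, for any fixed $n_{0}$ only finitely many configurations of adjacent atoms arise (up to $f$ itself), so a constant $C_{0}$ works trivially up to level $n_{0}$.

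The key combinatorial ingredient from Lemma~\ref{lemapartition} is that every sub-family of iterates of the form $\{I_{n}(c), f(I_{n}(c)), \ldots, f^{q_{n+1}-1}(I_{n}(c))\}$, and similarly for $I_{n+1}(c)$, is a subcollection of the atoms of a dynamical partition; in particular any such family, or a union of two such, has multiplicity of intersection at most $2$. By Remark~\ref{mult intersec and summability}, the summability hypothesis \eqref{eq1 Koebe} of Koebe's lemma is therefore automatic with $\ell = 2$ along every relevant trajectory. This is what enables me to apply the Koebe distortion principle with a constant depending only on $f$.

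For the inductive step, given adjacent atoms $I, J \in \mathcal{P}_{n}(c)$, I would choose a suitable backward iterate $f^{-k}$ transporting $I \cup J$ to a pair of intervals adjacent to the critical point $c$, namely to a configuration involving $I_{n}(c)$ and $I_{n+1}(c)$. To apply Koebe, I would enclose $I \cup J$ in a slightly larger interval $T$ whose relative space is bounded below using the real bounds at levels $\leq n-1$ (this is where the induction is used). As long as the backward orbit of $T$ stays away from critical points, Koebe yields bounded distortion of $f^{-k}$; whenever the orbit is forced to pass near a critical point, Proposition~\ref{prop 5.4 dFG21} together with the non-flat local coordinates $\phi_{c}$ of Definition~\ref{nonflat} allows one to replace $f$ locally by $x \mapsto x|x|^{d-1}$ and compare ratios of lengths directly using this explicit model. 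The remaining step is the direct comparison of $|I_{n}(c)|$ and $|I_{n+1}(c)|$ at the critical point, which again reduces, via $\phi_{c}$, to comparing $|\phi_{c}(f^{q_{n}}(c))|^{d}$ with $|\phi_{c}(f^{q_{n+1}}(c))|^{d}$, and this is controlled by the inductive hypothesis at the previous level.

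The main obstacle is controlling the distortion contributed by pullbacks crossing non-flat critical points: a critical point of degree $d$ does not preserve length ratios unless the intervals involved are comparable in size to their distance to the critical point. The inductive setup must therefore ensure two things simultaneously: (i) that along the chosen backward trajectory each critical point is encountered only a bounded number of times, independent of $n$, which is again a consequence of Lemma~\ref{lemapartition} together with the fact that distinct critical points are separated by atoms of the partition, and (ii) that the combination of Proposition~\ref{prop 5.4 dFG21} at the critical crossings with Koebe off the critical set produces a constant $C(f)$ that does not degrade with $n$. The cleanest way to do the latter is through cross-ratio estimates adapted to passing critical points, whose near-monotonicity under $C^{3}$ maps (a consequence of Schwarzian-type inequalities on intervals away from the critical set) compensates exactly the unavoidable amplification of the non-flat pullback.
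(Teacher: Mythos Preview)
The paper does not prove Theorem~\ref{real bounds}; it merely cites the result, attributing it to Herman and \'Swi\c{a}tek and referring to \cite[ch.~6]{dFG:book} for a detailed proof. So there is no in-paper argument to compare against, and your proposal should be judged on its own terms.

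As written, your inductive scheme has a genuine gap. You want the Koebe constant $K = K(\ell,\tau,f)$ at level $n$ to depend only on $f$, but the space $\tau$ you feed into Koebe is produced by the inductive hypothesis at level $n-1$. Unless you show that $\tau$ stays bounded away from $0$ by a quantity depending only on $f$ (which is exactly the Real Bounds you are trying to prove), the constant $C_n$ you obtain at level $n$ depends on $C_{n-1}$ through $K$, and nothing in your outline prevents $C_n$ from growing without bound. The same circularity hits your treatment of critical crossings: Proposition~\ref{prop 5.4 dFG21} gives only an upper bound on $Df$, so to compare the two sides of a pullback through a critical point you need an additional lower bound whose size is again controlled by the very comparability you are after.

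The actual argument (Herman--\'Swi\c{a}tek) does not proceed by Koebe plus induction. It uses a cross-ratio distortion inequality that holds \emph{uniformly along the entire orbit}, including through critical points, with a bound depending only on the total multiplicity of intersection (hence only on $f$). You allude to this in your last sentence, but you present it as a patch for the Koebe step, when in fact it is the entire mechanism: the cross-ratio inequality replaces both Koebe and your separate critical-point analysis, and it is precisely what breaks the circularity by giving a one-shot estimate that does not rely on any inductive constant from the previous level.
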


Note that for a rigid rotation we have $|I_n|=a_{n+1}|I_{n+1}|+|I_{n+2}|$. If $a_{n+1}$ is large, then $I_n$ is much larger than $I_{n+1}$. Thus, even for rigid rotations, real bounds do not hold in general.

Theorem \ref{real bounds} was obtained by Herman \cite{He2}, based on estimates by \'Swi\c{a}tek \cite{Sw1}. A detailed proof can be found in~\cite[ch. 6]{dFG:book}.

Theorem \ref{real bounds} has the following consequence (see ~\cite[ch. 8]{dFG:book}).

\begin{lemma} \label{lemma Dfqn} There exists $C_1 = C_1(f) > 0$ such that, for each $x \in S^1$ and all $n \geq 0$, we have $D f^{q_n}(x) \leq C_1$.
\end{lemma}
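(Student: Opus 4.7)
The plan is to combine the Koebe distortion principle (Lemma \ref{Koebe}) with the real bounds (Theorem \ref{real bounds}). The guiding intuition is that $f^{q_n}$ sends atoms of a dynamical partition $\mathcal{P}_n(c)$ to atoms of the same (or a shifted) partition, and since the real bounds make all such atoms uniformly comparable, $f^{q_n}$ cannot expand (or contract) a single point by more than a fixed factor once its distortion is brought under control.

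First I would fix an arbitrary critical point $c \in \crit{f}$ and consider the $n$-th dynamical partition $\mathcal{P}_n(c)$ given by Lemma \ref{lemapartition}. Given $x \in S^1$ and $n \ge 0$, let $M$ be an atom of $\mathcal{P}_n(c)$ containing $x$ and let $T \supset M$ be the union of $M$ with its two immediate neighbors in $\mathcal{P}_n(c)$. Theorem \ref{real bounds} ensures that $M$ has space inside $T$ bounded below by a uniform $\tau = C^{-1}$. Moreover, the combinatorial structure of $\mathcal{P}_n(c)$ (together with Remark \ref{mult intersec and summability}) implies that the family $\{T, f(T), \dots, f^{q_n-1}(T)\}$ has multiplicity of intersection bounded by a constant depending only on $f$, so $\sum_{j=0}^{q_n-1} |f^j(T)| \le \ell(f)$.

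The main obstacle is that $f^{q_n}$ need not be a $C^1$ diffeomorphism on the whole of $T$, since some iterate $f^j(T)$ with $0 \le j \le q_n - 1$ may contain a critical point of $f$ in its interior. I would handle this by partitioning the index range $\{0, 1, \dots, q_n - 1\}$ into a bounded (independent of $n$) number of blocks separated by the critical-passage indices: on each critical-free block Lemma \ref{Koebe} applies and yields uniformly bounded distortion for the corresponding iterate of $f$ on (the relevant image of) $M$; at each critical passage, Proposition \ref{prop 5.4 dFG21} bounds the single factor $Df(f^j(x))$ by $3 d_i \cdot |f(J_j)|/|J_j|$ for a suitable subinterval $J_j$. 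The bounded combinatorics of $\mathcal{P}_n(c)$ -- essentially, that first-return times to its atoms are of order at least $q_n$ -- ensures that in $q_n$ iterates the orbit of $x$ enters a fixed neighborhood of each critical point at most a uniformly bounded number of times, so only a bounded number of critical-passage factors appear.

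Multiplying the Koebe estimates on the good blocks with the length-ratio factors at the critical passages telescopes into a bound of the form $Df^{q_n}(x) \le K(f) \cdot |f^{q_n}(M)|/|M|$. Since $M$ and $f^{q_n}(M)$ are each a union of a bounded number of atoms of dynamical partitions of $f$ around various critical points, Theorem \ref{real bounds} yields $|f^{q_n}(M)|/|M| \le C(f)$, and the lemma follows with $C_1 = K \cdot C$.
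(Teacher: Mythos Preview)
The paper does not actually prove this lemma: it simply records it as a consequence of the real bounds and refers to \cite[ch.~8]{dFG:book}. Your outline is the standard argument and is essentially what one finds in that reference; it also mirrors closely the machinery the present paper develops in \S\ref{distortion-estimates}--\S\ref{rat dom comp} (Lemmas~\ref{Koebe for measure} and~\ref{critical times dominate}) for a closely related purpose. So your approach is correct and in line with the intended one.

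Two small points worth tightening. First, the space hypothesis in Lemma~\ref{Koebe} is on $f^k(M)$ inside $f^k(T)$ at the \emph{end} of each Koebe block, not on $M$ inside $T$ at the beginning; you need the real bounds (more precisely, something like Lemma~\ref{comparability of a bunch of images of atoms}) to guarantee definite space for the images at each stopping time. Second, your justification for the bounded number of critical passages is slightly oblique: the clean statement is that the family $\{f^j(T)\}_{j=0}^{q_n-1}$ has intersection multiplicity at most $3$ (cf.\ Lemma~\ref{bounding mult intersec}), so each of the $N$ critical points lies in at most three of these intervals, giving at most $3N$ critical times. With these adjustments the telescoping product you describe goes through exactly as you say.
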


Yet another consequence of the real bounds that will be useful in the present paper (see \S \ref{exist} below) is the following.

\begin{lemma}[Zero Lyapunov Exponent]\label{ZeroLyapExp} Let $f$ be a multicritical circle map with irrational rotation number and unique invariant measure $\mu$. Then $\log D f \in L^1(\mu)$ and
\[
\int_{S^1}\log Df\,d\mu=0\,.
\]
\end{lemma}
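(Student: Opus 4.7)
The plan is to first secure the integrability of $\log Df$ with respect to $\mu$, and then to evaluate the integral via Birkhoff's ergodic theorem, matched with upper and lower bounds on $Df^{q_k}$.

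For integrability, note that $(\log Df)^+ \leq \log\max_{S^1} Df$ is bounded, so only the negative part, concentrated near the critical points, requires attention. By the non-flatness condition (Definition \ref{nonflat}), $Df(x) \asymp |x-c_i|^{d_i-1}$ on a neighborhood of each $c_i \in \crit{f}$, reducing the task to showing $\int|\log|x-c_i||\,d\mu < \infty$ near each $c_i$. The real bounds (Theorem \ref{real bounds}) imply geometric decay of the atom sizes $|I_n(c_i)| \leq C\lambda^n$ for some $\lambda \in (0,1)$; meanwhile, Lemma \ref{lemapartition} together with the $f$-invariance of $\mu$ gives the identity $q_{n+1}\mu(I_n(c_i)) + q_n\mu(I_{n+1}(c_i)) = 1$, whence $\mu(I_n(c_i)) \leq 1/q_{n+1}$, which decays exponentially in $n$ (since $q_n$ grows at least as Fibonacci). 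Combining these yields a Hölder bound $\mu(B(c_i,r)) \leq Cr^{\alpha}$ for some $\alpha > 0$, and a layer-cake estimate then gives $\log Df \in L^1(\mu)$.

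Granted integrability, ergodicity of $\mu$ (a consequence of unique ergodicity) together with Birkhoff's theorem gives, for $\mu$-a.e.\ $x \in S^1$,
\[
\frac{1}{n}\log Df^n(x) \longrightarrow L := \int_{S^1}\log Df\,d\mu.
\]
Specializing to $n = q_k$, Lemma \ref{lemma Dfqn} gives $\log Df^{q_k}(x) \leq \log C_1$, so dividing by $q_k$ and passing to the limit forces $L \leq 0$.

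The reverse inequality $L \geq 0$ is the delicate step. Arguing by contradiction, if $L<0$ then $Df^{q_k} \to 0$ in $\mu$-measure, so it suffices to exhibit a subset of $S^1$ of $\mu$-measure bounded away from zero on which $Df^{q_k}$ stays uniformly bounded below. The strategy is to pick an atom $J \in \mathcal{P}_k(c_1)$ and take $T \supset J$ to be $J$ together with its two adjacent atoms; the real bounds ensure bounded space of $f^{q_k}(J)$ inside $f^{q_k}(T)$ as well as $|f^{q_k}(J)|/|J| \asymp 1$, while the family $\{f^j(T)\}_{0 \leq j < q_k}$ has universally bounded multiplicity of intersection (Remark \ref{mult intersec and summability}). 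Under these conditions, the Koebe distortion principle (Lemma \ref{Koebe}) delivers $Df^{q_k}(x) \geq c_{\star} > 0$ on $J$, with $c_\star$ independent of $k$ and $J$. The main obstacle is that the orbit $T, f(T), \dots, f^{q_k-1}(T)$ may meet critical points of $f$, in which case the Koebe hypotheses fail; this is handled by subdividing $J$ at the finitely many preimages of critical points under $f^j$ for $0 \leq j < q_k$, applying the argument above to each subpiece separately, and controlling the $\mu$-mass of the excised intervals via the non-flatness together with the Hölder bound $\mu(B(c_i,r)) \leq Cr^{\alpha}$ from the first step.
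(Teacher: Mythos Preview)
The paper does not prove this lemma; it simply refers to \cite{dFG2} (and \cite[Section~8.3]{dFG:book}). So there is no ``paper's proof'' to match against, and your proposal must stand on its own.

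Your integrability argument and the upper bound $L\le 0$ (via Lemma~\ref{lemma Dfqn} along the subsequence $q_k$) are correct. The gap is in the argument for $L\ge 0$. The subdivision/excision sketch does not actually produce a set of $\mu$-measure bounded away from zero on which $Df^{q_k}\ge c_\star$ uniformly in~$k$, for two concrete reasons. First, after subdividing an atom $J$ at the preimages of critical points under $f^j$, $0\le j<q_k$, each subpiece $J_i$ has an endpoint that lands \emph{exactly} on a critical point under some iterate; any enlargement $T_i\supset J_i$ needed for Koebe space therefore still meets $\crit{f}$ along its orbit, so the Koebe hypothesis is not recovered by subdivision alone. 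Second --- and more seriously --- even granting bounded distortion on some shrunken $J_i'\Subset J_i$, converting this into an \emph{absolute} lower bound on $Df^{q_k}$ requires $|f^{q_k}(J_i')|/|J_i'|\asymp 1$. But if $f^{j_*}(J_i)=[c,b]$ abuts a critical point of order $d$, then $|f^{j_*+1}(J_i)|\asymp |f^{j_*}(J_i)|^{\,d}$, so a single critical encounter can shrink the length ratio by a factor $|I_k|^{\,d-1}\to 0$. Your H\"older bound $\mu(B(c_i,r))\le Cr^\alpha$ controls the $\mu$-mass of small balls around critical points, but it does not prevent this length collapse, nor does it show that the union of ``bad'' subpieces has $\mu$-mass bounded away from~$1$ uniformly in~$k$.

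One route that does close (and is closer in spirit to the cited references) is to bound $\int_{S^1}\log Df^{q_k}\,d\mu = q_kL$ from below by a constant independent of $k$: split $\log Df = g_M + h_M$ with $g_M=\max(\log Df,-M)$ of bounded variation (so its Birkhoff sums at time $q_k$ are controlled within $\operatorname{Var}(g_M)$ of $q_k\int g_M\,d\mu$ by Denjoy--Koksma), and bound the contribution of the singular part $h_M$, supported on $\bigcup_i\{|\phi_{c_i}|<e^{-M/(d_i-1)}\}$, using that the first $q_k$ iterates of any point visit each atom of $\mathcal{P}_k(c_i)$ at most once together with the exponential decay of $|I_n(c_i)|$ from the real bounds.
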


A proof of Lemma \ref{ZeroLyapExp} can be found in~\cite{dFG2} (see also \cite[section 8.3]{dFG:book}).

\subsubsection{On the notions of domination and comparability} \label{dom comp 1}

To simplify both the understanding of and future calculations involving the real bounds, we introduce the notions of domination and comparability {\it modulo} $f$.

Given two circle intervals $I, J$, we shall say that {\it $I$ dominates $J$ modulo $f$}, and write $I \geq J$, if there exists a constant $K > 1$ depending only on $f$ such that $\absol{J} \leq K \absol{I}$. If both $I \geq J$ and $J \geq I$ (i.e. if there is $K = K(f) > 1$ such that $K^{-1} \absol{I} \leq \absol{J} \leq K \absol{I}$), we shall say that {\it $I$ and $J$ are comparable modulo $f$} (and write $I \asymp J$).

Thus, Theorem \ref{real bounds} states precisely that adjacent atoms of a dynamical partition are always comparable.

Observe that neither domination or comparability are transitive relations: if we are given a domination chain $I_1 \geq I_2 \geq \cdots \geq I_k$, we can only say that $I_1 \geq I_k$ if the length $k$ of the chain is bounded by a constant that depends only on $f$ (and similarly for comparability).

\section{Automorphic measures} \label{auto}

In this section we define automorphic measures of non-negative exponent for multicritical circle maps. We further prove that they have full support on the circle (Proposition \ref{automorphic measure full support}) and are non-atomic (Lemma \ref{no atoms}).

\begin{definition}[Automorphic measures] \label{def automorphic measures} Let $s \geq 0$.
An {\it automorphic measure of exponent $s$ for $f$} (or $f$-automorphic measure of exponent $s$) is a Radon probability measure $\nu$ on $S^1$ such that, for all continuous functions $\phi \in C^0(S^1)$,
\begin{equation} \label{eq def automorphic}
    \int_{S^1} \, \phi \, d\nu = \int_{S^1} \, (\phi \circ f) \, \left( D f \right)^s \, d\nu \, .
\end{equation}
We denote the set of $f$-automorphic measures of exponent $s$ by $\auto{s}$.
\end{definition}

Equivalently (see Proposition \ref{extension L1} below), a Radon probability measure $\nu$ on $S^1$ is $f$-automorphic of exponent $s$ if, and only if, the pullback measure $f^\ast \nu$ is equivalent to $\nu$, with Radon-Nikodym derivative
\[
	\frac{d f^\ast \nu}{d\nu} = (D f)^s \, .
\]

Observe that we leave out the possibility of negative exponents, \emph{i.e.}, $s < 0$. Though automorphic measures of negative exponent make perfect sense and indeed always exist in the case of diffeomorphisms\footnote{As it happens, the case $s=-1$ is suitable to understand both the variation of the rotation number along generic $1$-parameter families of circle diffeomorphisms \cite{dMP}, as well as to build the tangent space of the set of $C^2$ diffeomorphisms with a given irrational rotation number \cite[Th\'eor\`eme 2]{DY}.}, they are significantly more difficult to work with in the critical case. Indeed, if $s < 0$, then $(D f)^s$ blows up at the critical points of $f$, so we cannot take for granted that $(\phi \circ f) \, \left( D f \right)^s$ will be $\nu$-integrable for any $\phi \in C^0(S^1)$ and Radon probability measure $\nu$ on $S^1$.

We further remark that the notion of automorphic measures makes perfect sense on any dimension, provided the one-dimensional derivative $Df(x)$ is replaced by the Jacobian of $f$ at $x$, \emph{i.e.}, the absolute value of the determinant of the matrix $Df(x)$. As we mentioned in the introduction, for complex one-dimensional systems this is exactly the same as the notion of \emph{conformal measure} introduced by Sullivan in \cite{Su1}. In the present paper, however, we will of course only treat the real one-dimensional case.

Finally, observe that, in the case $s = 0$, an $f$-automorphic measure of exponent $0$ is simply an $f$-invariant probability measure. Therefore, the case $s = 0$ is well understood, and Theorem \ref{exist-unique} in this case is precisely the statement that $f$ (just like any circle homeomorphism with irrational rotation number) is uniquely ergodic. Therefore, for the rest of this paper, we will focus on the case $s > 0$. Thus, let $s > 0$ and $\nu \in \auto{s}$ be fixed.

\begin{prop} \label{extension L1} For all $\phi \in L^1(\nu)$ and $n \geq 1$, $(\phi \circ f^n) (D f^n)^s \in L^1(\nu)$ and
\begin{equation} \label{eq extension L1}
    \int_{S^1} \, \phi \, d\nu = \int_{S^1} \, (\phi \circ f^n) (D f^n)^s \, d\nu \, .
\end{equation}
\end{prop}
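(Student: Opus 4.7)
The plan is first to extend the defining identity \eqref{eq def automorphic} from continuous test functions to arbitrary $L^1(\nu)$ observables (handling the case $n=1$), and then to bootstrap from $n=1$ to general $n\geq 1$ by induction via the chain rule. The main conceptual move is to recast \eqref{eq def automorphic} as an equality of Borel measures; once that is done, everything else follows from standard change-of-variables.

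First, taking $\phi\equiv 1$ in \eqref{eq def automorphic} gives $\int_{S^1}(Df)^s\,d\nu = 1$, so $(Df)^s\in L^1(\nu)$ and we may define the Borel probability measure $\mu_s := (Df)^s\,\nu$ on $S^1$. Equation \eqref{eq def automorphic} then says $\int_{S^1} \phi\,d\nu = \int_{S^1}(\phi\circ f)\,d\mu_s$ for every $\phi\in C^0(S^1)$, which, by Riesz representation (both sides are Radon measures), is equivalent to the identity $f_\ast\mu_s = \nu$ of Borel measures on $S^1$. Applying the standard push-forward transformation formula to every nonnegative Borel function $\phi$ yields
\[
\int_{S^1}\phi\,d\nu \;=\; \int_{S^1}\phi\,d(f_\ast\mu_s) \;=\; \int_{S^1}(\phi\circ f)\,d\mu_s \;=\; \int_{S^1}(\phi\circ f)(Df)^s\,d\nu .
\]
Applying this to $|\phi|$ for $\phi\in L^1(\nu)$ shows $(\phi\circ f)(Df)^s\in L^1(\nu)$, and decomposing $\phi=\phi^+-\phi^-$ gives \eqref{eq extension L1} for $n=1$.

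For general $n\geq 1$ I would argue by induction on $n$. Fix $\phi\in L^1(\nu)$ and set $\psi := (\phi\circ f)(Df)^s$, which lies in $L^1(\nu)$ by the $n=1$ step. Writing $f^{n+1}=f^n\circ f$ and using the chain rule in the form $Df^{n+1}(x)=Df^n(f(x))\cdot Df(x)$, one checks pointwise that
\[
(\phi\circ f^{n+1})(Df^{n+1})^s \;=\; \bigl(\psi\circ f^n\bigr)\,(Df^n\circ f)^s\cdot \text{something}
\]
—more convenient is $f^{n+1}=f\circ f^n$, giving $Df^{n+1}(x)=Df(f^n(x))\cdot Df^n(x)$ and hence
\[
(\phi\circ f^{n+1})(Df^{n+1})^s \;=\; \bigl((\phi\circ f)(Df)^s\bigr)\!\circ f^n\cdot (Df^n)^s \;=\; (\psi\circ f^n)(Df^n)^s.
\]
Applying the inductive hypothesis to $\psi$ and then the $n=1$ case to $\phi$,
\[
\int_{S^1}(\phi\circ f^{n+1})(Df^{n+1})^s\,d\nu \;=\; \int_{S^1}(\psi\circ f^n)(Df^n)^s\,d\nu \;=\; \int_{S^1}\psi\,d\nu \;=\; \int_{S^1}\phi\,d\nu,
\]
completing the induction. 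Integrability of $(\phi\circ f^{n+1})(Df^{n+1})^s$ in $L^1(\nu)$ is obtained by the same chain applied to $|\phi|$.

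I do not expect any serious obstacle: the only step requiring any thought is the $n=1$ extension to $L^1(\nu)$, but once the defining relation is reformulated as the measure identity $f_\ast\bigl((Df)^s\nu\bigr)=\nu$, this extension is automatic from the classical push-forward change-of-variables formula. Note that the restriction $s\geq 0$ is used implicitly here to ensure that $(Df)^s$ is a bounded, nonnegative Borel function (in particular $\nu$-integrable without any a priori estimate), so that $\mu_s$ is a well-defined finite Borel measure; this is the place where the difficulty alluded to in the paragraph following Definition \ref{def automorphic measures} would arise for $s<0$.
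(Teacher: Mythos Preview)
Your argument is correct and is precisely the ``standard argument from the theory of Radon measures'' the paper leaves unwritten: reformulating \eqref{eq def automorphic} as the pushforward identity $f_\ast\bigl((Df)^s\nu\bigr)=\nu$ is exactly what makes the $L^1$ extension and the integrability claim automatic. The only cosmetic difference is the order of operations---the paper inducts first on continuous $\phi$ and then extends to $L^1(\nu)$, while you extend first for $n=1$ and then induct via the chain rule---but these are equivalent.
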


\begin{proof}
Observe that \eqref{eq extension L1} holds trivially if $\phi$ is continuous, by applying \eqref{eq def automorphic} inductively. The extension to $L^1$ functions $\phi$ now follows from a standard argument from the theory or Radon measures, with the main difficulty being to show that $(\phi \circ f^n) (D f^n)^s \in L^1(\mu)$ for all $\phi \in L^1(\mu)$.
\end{proof}

\medskip

Note in particular that, under forward iteration, the $\nu$-measure of a Borel set $A \subset S^1$ behaves according to the following rule:
\begin{equation}\label{AutIteration}
\nu\big(f^n(A)\big)=\int_{A}\,(D f^n)^s \, d\nu\,,\quad\mbox{for all $n\in\mathbb{N}$.}
\end{equation}

\begin{prop} \label{automorphic measure full support} The measure $\nu$ is supported on the entire circle.
\end{prop}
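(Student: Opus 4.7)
The plan is to derive a contradiction from the assumption that $K := \supp{\nu}$ is a proper subset of $S^1$, by combining the iteration formula \eqref{AutIteration} with the minimality of $f$ (Theorem~\ref{yoccoztheorem}). If $K \neq S^1$, then $U := S^1 \setminus K$ is a non-empty open set, and by the very definition of support $\nu(U) = 0$.

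The first step is to show that $U$ is forward invariant under $f$. Applying \eqref{AutIteration} with $A = U$ and $n = 1$ gives
\[
\nu(f(U)) \;=\; \int_U (Df)^s \, d\nu \;=\; 0,
\]
because $\nu(U) = 0$. Since $f$ is a homeomorphism, $f(U)$ is open; being an open set of zero $\nu$-measure, it must be contained in the maximal open $\nu$-null set, which is $U$ itself. Hence $f(U) \subset U$, and passing to complements this reads $f^{-1}(K) \subset K$.

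The second step is to invoke minimality. The containment $f^{-1}(K) \subset K$ makes the sequence of non-empty compact sets $\{f^{-n}(K)\}_{n \geq 0}$ nested and decreasing, so its intersection $L := \bigcap_{n \geq 0} f^{-n}(K)$ is non-empty by compactness of $S^1$; a direct computation then gives $f^{-1}(L) = L$. Thus $L$ is a non-empty closed strictly $f$-invariant subset of $S^1$. By Theorem~\ref{yoccoztheorem}, $f$ is topologically conjugate to an irrational rotation and hence minimal, so the only such subset is $S^1$ itself. Consequently $L = S^1$, which forces $K \supset L = S^1$, contradicting $U \neq \emptyset$.

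The only mild subtlety is that the first step delivers just forward invariance $f(U) \subset U$, which is a priori weaker than the equality $f(U) = U$ required for a direct appeal to minimality; this is circumvented by passing to the strictly invariant intersection $L$, which is where the nontrivial content of the argument resides.
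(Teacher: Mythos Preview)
Your argument is correct and rests on the same two ingredients as the paper's: the iteration formula \eqref{AutIteration} shows that $\nu$-null open sets are taken by $f$ to $\nu$-null open sets, and then minimality (Theorem~\ref{yoccoztheorem}) furnishes the contradiction.

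The paper's execution is, however, shorter and avoids the construction of $L$ altogether. Rather than showing $f(U)\subset U$ and then carving out a fully invariant closed set, the paper simply picks any open interval $I\subset U$, observes that $\nu(f^n(I))=0$ for every $n\geq 0$ by \eqref{AutIteration}, and uses the consequence of minimality that the \emph{forward} orbit $\{f^n(I)\}_{n\geq 0}$ of any open interval covers $S^1$ after finitely many steps; summing the null measures then gives $\nu(S^1)=0$. This sidesteps the ``mild subtlety'' you flag: one never needs full invariance, because for a minimal homeomorphism the forward orbit of an open set already exhausts the circle. Your detour through $L=\bigcap_{n\ge 0} f^{-n}(K)$ is a perfectly valid (and standard) workaround, but it is not needed here.
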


\begin{proof} Since the pullback measure $f^\ast \nu$ is equivalent to $\nu$, $\nu$-null sets are mapped under $f$ into $\nu$-null sets (see \eqref{AutIteration} above). But, since $f$ is topologically conjugate to an irrational rotation, the positive orbit of an open interval eventually covers the whole circle, and then this interval must have positive $\nu$-measure.
\end{proof}

\medskip

We denote by 
\[
	\orbcrit{f} = \bigcup_{j = 1}^N \, \mathcal{O}_f(c_j)
\]
the union of the critical orbits of $f$, and its complement $S^1 \setminus \orbcrit{f}$ by $\Lambda$. Observe that $\Lambda$ is $f$-invariant and its complement is countable (but dense!).

\begin{lemma} \label{no atoms} The measure $\nu$ has no atoms. In particular, $\Lambda$ has full $\nu$-measure on the circle.
\end{lemma}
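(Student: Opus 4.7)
The plan is to argue by contradiction: assume $\nu(\{y\})=\alpha>0$ for some $y\in S^{1}$, and produce infinitely many distinct points with uniformly positive $\nu$-mass, violating $\nu(S^{1})=1$. The crux is to promote the defining relation \eqref{eq def automorphic} to indicators of singletons. Since $\indicator_{\{y\}}\in L^{1}(\nu)$, Proposition \ref{extension L1} applies and, using that $\indicator_{\{y\}}\circ f^{n}=\indicator_{\{f^{-n}(y)\}}$, yields
\[
\nu(\{y\}) \;=\; \bigl(Df^{n}(f^{-n}(y))\bigr)^{s}\,\nu\bigl(\{f^{-n}(y)\}\bigr)\qquad\text{for every } n\geq 1.
\]

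Next I would evaluate this identity along the subsequence $n=q_{k}$ of return times. Lemma \ref{lemma Dfqn} bounds $Df^{q_{k}}\leq C_{1}$ uniformly on $S^{1}$, hence at the point $f^{-q_{k}}(y)$ in particular, so the previous display rearranges to $\nu(\{f^{-q_{k}}(y)\})\geq \alpha/C_{1}^{s}$ for every $k\geq 0$. Since $q_{k}\to\infty$ and $f$ has no periodic orbits (irrational rotation number), the set $\{f^{-q_{k}}(y):k\geq 0\}$ is infinite, and summing the $\nu$-masses of countably many distinct points in this set gives
\[
1 \;=\; \nu(S^{1}) \;\geq\; \sum_{k}\nu\bigl(\{f^{-q_{k}}(y)\}\bigr) \;\geq\; \sum_{k}\frac{\alpha}{C_{1}^{s}} \;=\; +\infty,
\]
a contradiction. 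Therefore $\nu(\{y\})=0$ for every $y\in S^{1}$.

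The second assertion follows at once: $\orbcrit{f}$ is a countable union of orbits, hence countable, so $\nu(\orbcrit{f})=0$ and $\nu(\Lambda)=1$. I do not anticipate a serious obstacle: the only subtle point is the legitimacy of inserting the discontinuous function $\indicator_{\{y\}}$ into the automorphic relation, which is handled by Proposition \ref{extension L1}. The whole argument is driven by the universal bound of Lemma \ref{lemma Dfqn}, which is precisely what prevents the $\nu$-mass of a putative atom from being dissipated along the backward iterates at the return times $q_{k}$.
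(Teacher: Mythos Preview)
Your proposal is correct and follows essentially the same approach as the paper: both argue by contradiction, use the automorphic identity (via Proposition~\ref{extension L1} / equation~\eqref{AutIteration}) to express $\nu(\{y\})$ in terms of $\nu(\{f^{-n}(y)\})$ and $Df^{n}(f^{-n}(y))$, then invoke Lemma~\ref{lemma Dfqn} along the return times $q_k$ to produce infinitely many distinct backward iterates each carrying $\nu$-mass at least $\alpha/C_1^s$. The paper additionally remarks that the atom cannot lie in the forward critical orbit (since otherwise the right-hand side of the identity vanishes), but this is implicit in your argument as well since $\alpha>0$ forces the derivative factor to be positive before you divide by it.
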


\begin{proof} Arguing by contradiction, suppose there is some $x_0 \in S^1$ such that $\nu(\listset{x_0}) = \delta > 0$, and note that \eqref{AutIteration} implies
\[
\nu(\listset{x_0})=\nu\big(\listset{f^{-n}(x_0)}\big)\,\big(Df^n(f^{-n}(x_0))\big)^s\quad\mbox{for all $n\in\mathbb{N}$.}
\]
In particular, $x_0$ cannot be in the forward orbit of any critical point of $f$. Moreover, since $\nu$ is a probability measure and $f$ has no periodic orbits,
\begin{equation*}
1 \geq \sum_{n = 0}^{\infty} \, \nu(\listset{f^{-n}(x_0)}) = \delta \, \sum_{n = 0}^{\infty} \frac{1}{\left( D f^n(f^{-n}(x_0)) \right)^s} \geq \delta \, \sum_{n = 0}^{\infty} \frac{1}{\left( D f^{q_n}(f^{-q_n}(x_0)) \right)^s}\,.
\end{equation*}
However, by Lemma \ref{lemma Dfqn}, we have $D f^{q_n}(f^{-q_n}(x_0)) \leq C_1$ for all $n \geq 0$. Thus we obtain
\[
1 \geq \delta \, \sum_{n = 0}^{\infty} \, C_1^{-s} = \infty,
\]
which is the desired contradiction.
\end{proof}

\section{Existence} \label{exist}

In this section we show that, for all $s > 0$, $\auto{s}$ is non-empty (the existence part of Theorem \ref{exist-unique}). For the entire section, $s$ shall be a fixed positive number. 

Let $\Sigma \colon S^1 \to [0, \infty]$ be defined by
\begin{equation} \label{sigma s}
    \Sigma(x) = \sum_{n = 0}^{\infty} \, \left( Df^n(x) \right)^s.
\end{equation}
Observe that if $f^k(x) \in \crit{f}$ for some $k \geq 0$, then $\Sigma(x) < \infty$, since it is just a finite sum. Therefore, there is a dense subset of $S^1$ (the union of the backward orbits of the critical points) on which $\Sigma$ is finite. However, as the following lemma shows, there are points on the circle where $\Sigma$ diverges.

\begin{lemma} \label{divergence almost everywhere} 
$\Sigma = \infty$ $\mu$-almost everywhere.
\end{lemma}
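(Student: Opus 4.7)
The strategy is to combine Lemma~\ref{ZeroLyapExp} (which gives $\log Df\in L^1(\mu)$ with $\int_{S^1}\log Df\,d\mu=0$) with Atkinson's recurrence theorem for Birkhoff sums of zero-mean $L^1$ observables under an ergodic measure-preserving system. The $f$-invariance of $\mu$ yields much more than the mere $\tfrac{1}{n}\log Df^n(x)\to 0$ that Birkhoff delivers; what is needed is a recurrence-type refinement.

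Since $f$ is topologically conjugate to an irrational rotation by Theorem~\ref{yoccoztheorem}, $f$ is uniquely ergodic, so in particular $\mu$ is $f$-ergodic. Setting $\phi:=\log Df$, the chain rule gives
\[
S_n\phi(x)\;:=\;\sum_{i=0}^{n-1}\phi(f^i(x))\;=\;\log Df^n(x).
\]
Atkinson's theorem asserts that for any ergodic probability measure-preserving system $(X,T,\mu)$ and any $\phi\in L^1(\mu)$ with $\int\phi\,d\mu=0$, one has $\liminf_{n\to\infty}|S_n\phi(x)|=0$ for $\mu$-a.e.\ $x$. Applied to $(S^1,f,\mu)$ and our $\phi$, it yields, for $\mu$-a.e.\ $x\in S^1$ and every $\varepsilon>0$, infinitely many $n\in\N$ with $e^{-\varepsilon}<Df^n(x)<e^{\varepsilon}$.

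Choosing $\varepsilon=(\log 2)/s$ (recall $s>0$), we conclude that $(Df^n(x))^s>\tfrac{1}{2}$ for infinitely many $n$, and therefore
\[
\Sigma(x)\;=\;\sum_{n=0}^{\infty}(Df^n(x))^s\;=\;+\infty
\]
for $\mu$-a.e.\ $x\in S^1$, which is the claim.

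The main obstacle is precisely this upgrade from the Birkhoff statement $\tfrac{1}{n}\log Df^n(x)\to 0$ (which is essentially immediate from Lemma~\ref{ZeroLyapExp}) to the stronger \emph{recurrence} statement that $\log Df^n(x)$ returns close to $0$ infinitely often. The pointwise ergodic theorem alone permits $\log Df^n(x)\to-\infty$ sublinearly in $n$, in which case $\Sigma(x)$ could very well be finite $\mu$-a.e.; an attempt based only on Jensen's inequality (giving $\int\Sigma\,d\mu=\infty$) together with ergodicity of the essentially $f$-invariant set $\{\Sigma=\infty\}$ is, for the same reason, insufficient to pin down the dichotomy. It is precisely Atkinson's theorem---equivalently, the conservativity of the skew product $(x,t)\mapsto(f(x),\,t+\phi(x))$ on $S^1\times\R$---that rules this out, using crucially both the ergodicity of $\mu$ and the vanishing $\int\phi\,d\mu=0$ furnished by Lemma~\ref{ZeroLyapExp}.
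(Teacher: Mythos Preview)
Your proof via Atkinson's theorem is correct and gives a clean route to the result. The paper takes a different, more self-contained path that does not invoke Atkinson: it first uses Jensen's inequality together with Lemma~\ref{ZeroLyapExp} to obtain $\int (Df^n)^s\,d\mu\ge 1$ for all $n$, hence $\int\sum_{k=0}^{n-1}(Df^{q_k})^s\,d\mu\ge n$; then, assuming for contradiction that $\Sigma<\infty$ $\mu$-a.e., it splits this integral over $X_m=\{\Sigma\le m\}$ and its complement and uses the \emph{uniform} bound $Df^{q_k}\le C_1$ from Lemma~\ref{lemma Dfqn} on the small set $S^1\setminus X_m$ to reach $m\ge n(1-C_1^s\epsilon)$, a contradiction as $n\to\infty$. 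So your closing remark is slightly off: while ``Jensen plus invariance of $\{\Sigma=\infty\}$'' alone would indeed be insufficient (one could have $\Sigma<\infty$ a.e.\ yet $\int\Sigma\,d\mu=\infty$), the paper supplements Jensen with the pointwise bound on the closest-return iterates, which is exactly what rules out the scenario of $\log Df^n$ drifting to $-\infty$ along the full sequence. Your approach trades this real-bounds input for a general ergodic-theoretic tool; the paper trades Atkinson for an extra appeal to the structure of multicritical circle maps already established in \S\ref{prelim}.
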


\begin{proof} We show that the set
\begin{equation} \label{eq1 div ae}
	A \eqdef \setbuilder{x \in \Lambda}{\Sigma(x) = \infty}
\end{equation}
has full $\mu$-measure. To do this, first observe that, for all $x \in S^1$,
\begin{equation*}
	\Sigma(x) = 1 + (D f(x))^s \, \Sigma(f(x))\,.
\end{equation*}
It follows that $A$ is $f$-invariant, so, by the ergodicity of $\mu$, it suffices to show that $\mu(A) > 0$.

For each $n \geq 0$, $\left(D f^n\right)^s \in C^2(S^1)$, so we may apply Jensen's inequality to obtain
\begin{equation*}
\log \left( \int_{S^1} \, \left(D f^n\right)^s \, d\mu \right) \geq \int_{S^1} \, \log \left(D f^n\right)^s \, d\mu = s \sum_{i = 0}^{n - 1} \, \int_{S^1} \, \log D f \circ f^i \, d\mu\,.
\end{equation*}
Since $\mu$ is $f$-invariant,
\begin{equation} \label{eq2 div ae}
    \log  \left( \int_{S^1} \, \left(D f^n\right)^s \, d\mu \right) \geq s \sum_{i = 0}^{n - 1}
    \int_{S^1} \, \log D f \, d\mu = 0\,,
\end{equation}
where we have used Lemma \ref{ZeroLyapExp}. Thus,
\[
\int_{S^1} \, \left( D f^n \right)^s \, d\mu \geq 1
\]
for all $n \geq 0$, which implies in particular that
\begin{equation} \label{eq3 div ae}
	\int_{S^1} \, \sum_{k = 0}^{n - 1} \, (D f^{q_k})^s \, d\mu \geq n
\end{equation}
for all $n \geq 1$.

We argue by contradiction. Suppose $\mu(A) = 0$; then $\Sigma$ must be finite $\mu$-almost everywhere. Now, for each $m \geq 1$, let
\begin{equation} \label{eq4 div ae}
X_m \eqdef \setbuilder{x \in S^1}{\Sigma(x) \leq m} \, .
\end{equation}
Since we are assuming that $\Sigma$ is finite $\mu$-almost everywhere,
\begin{equation} \label{eq5 div ae}
\lim_{m \to \infty} \, \mu(X_m) = 1.
\end{equation}

Let $0 < \epsilon < C_1^{-s}$, where $C_1 = C_1(f)$ is the constant of Lemma \ref{lemma Dfqn}. From \eqref{eq5 div ae}, there exists $m_0 \in \N$ such that, for all $m \geq m_0$, $\mu(S^1 \setminus X_m) \leq \epsilon$.

But then, from \eqref{eq3 div ae}, \eqref{eq4 div ae} and Lemma \ref{lemma Dfqn}, we have that, for all $n \geq 1$ and $m \geq m_0$,
\begin{equation} \label{eq6 div ae}
\begin{split}
n &\leq \int_{S^1} \, \sum_{k = 0}^{n - 1} \, (D f^{q_k})^s \, d\mu = \int_{X_m} \, \sum_{k = 0}^{n - 1} \, (D f^{q_k})^s \, d\mu + \int_{S^1 \setminus X_m} \, \sum_{k = 0}^{n - 1} \, (D f^{q_k})^s \, d\mu \\
&\leq \int_{X_m} \, \Sigma \, d\mu + \sum_{k = 0}^{n-1} \int_{S^1 \setminus X_m} \, (D f^{q_k})^s \, d\mu \leq m \, \mu(X_m) + n \, C_1^s \, \mu(S^1 \setminus X_m) \\
&\leq m + n \, C_1^s \, \epsilon\,,
\end{split}
\end{equation}
which implies that $m \geq n \, (1 - C_1^s \, \epsilon)$. Since $1 - C_1^s \, \epsilon > 0$, the contradiction arises by letting $n \to \infty$ while keeping $m$ fixed.
\end{proof}

\vspace{0.2cm}

As it turns out, the only thing we will need in the proof below from the set $A$ is the fact that it is non-empty, which certainly follows from Lemma~\ref{divergence almost everywhere}.

\vspace{0.2cm}

\begin{proof}[Proof of Theorem \ref{exist-unique}, existence part] Consider the function $\Sigma$ and the set $A$ from Lemma \ref{divergence almost everywhere}. Fix
$x \in A$ and, for each $n \geq 1$, let
\[
	S_n(x) = \sum_{i = 0}^{q_n - 1} \, (D f^i(x))^s\,.
\]
Consider
\[
	\mu_{s, x, n} \eqdef \frac{1}{S_n(x)} \, \sum_{i = 0}^{q_n - 1} \, (D f^i(x))^s \, \delta_{f^i(x)}\,,
\] 
which is an atomic probability measure. By compactness, there is a monotone sequence $(n_k) \subset \N$ and $\mu_{s,x} \in \mathcal{P}(S^1)$ such that, for all $\phi \in C^0(S^1)$,
\[
    \int_{S^1} \, \phi \, d\mu_{s,x,n_k} \longrightarrow \int_{S^1} \, \phi \, d\mu_{s,x}\,.
\]
In particular, since $(\phi \circ f) (D f)^s \in C^0(S^1)$,
\[
    \int_{S^1} \, (\phi \circ f) (D f)^s \, d\mu_{s,x,n_k} \longrightarrow \int_{S^1} \, (\phi \circ f) (D f)^s \, d\mu_{s,x}
\]for all $\phi \in C^0(S^1)$. We claim that $\mu_{s,x}$ is $s$-automorphic under $f$. Indeed, for all $k \geq 1$ and $\phi \in C^0(S^1)$, we have
\begin{equation*}
\begin{split}
    & \quad \absol{\int_{S^1} \, [\phi - (\phi \circ f) (D f)^s] \, d\mu_{s,x,n_k}} \\
    &= \frac{1}{S_{n_k}(x)} \, \absol{\sum_{i = 0}^{q_{n_k} - 1} \, (D f^i(x))^s \, [\phi(f^i(x)) - \phi(f^{i+1}(x)) (D f(f^i(x)))^s]} \\
    &= \frac{1}{S_{n_k}(x)} \, \absol{\phi(x) - \phi(f^{q_{n_k}}(x)) \, (D f^{q_{n_k}}(x))^s} \\
    &\leq \normof{\phi}_{C^0} \, [1 + (D f^{q_{n_k}}(x))^s] \, \frac{1}{S_{n_k}(x)} \leq \normof{\phi}_{C^0} (1 + C_1^s) \, \frac{1}{S_{n_k}(x)}\,,
\end{split}
\end{equation*}
where we have used Lemma \ref{lemma Dfqn}. Consequently,
\begin{equation*}
\begin{split}
    &\quad \absol{\int_{S^1} \, \phi \, d\mu_{s,x} - \int_{S^1} \, (\phi \circ f) (D f)^s \, d\mu_{s, x}} = \lim_{k \to \infty} \, \absol{\int_{S^1} \, [\phi - (\phi \circ f) (D f)^s] \, d\mu_{s,x,n_k}} \\
    &\leq \normof{\phi}_{C^0} (1 + C_1^s) \, \lim_{k \to \infty} \frac{1}{S_{n_k}(x)} = 0\,,
\end{split}
\end{equation*}
since $x \in A$. Thus, $\mu_{s,x}$ is $f$-automorphic of exponent $s$, which concludes the proof. \end{proof}

\begin{remark} \label{Yoccoz operator} In ~\cite{DY}, Douady and Yoccoz prove the existence part of Theorem \ref{exist-unique} in the context of diffeomorphisms through a different approach. First, they define a continuous operator $U_{s,f} \colon \mathcal{M}(S^1) \to \mathcal{M}(S^1)$ on the space $\mathcal{M}(S^1)$ of signed finite Radon measures on the circle (equipped with the weak-$\ast$ topology) by
\[
\int_{S^1} \, \phi \, d(U_{s, f} \nu) \eqdef \frac{1}{\int_{S^1} \, (D f)^s \, d\nu} \, \int_{S^1} \, (\phi \circ f)(D f)^s \, d\nu\,.
\]
Clearly, the operator $U_{s,f}$ leaves invariant the convex compact set $\mathcal{P}(S^1)$ of Radon probability measures on the circle. The authors then use the Schauder-Tychonoff fixed point theorem to conclude that $U_{s, f}$ has a fixed point $\mu_s \in \mathcal{P}(S^1)$, and through some estimates, they conclude that this fixed point $\mu_s$ must be $f$-automorphic of exponent $s$.

In the critical case, this approach fails. Indeed, if $\nu = \delta_c$ is a point mass on a critical point $c$ of $f$, then $U_{s, f} \nu$ is ill-defined, since 
\[
\int_{S^1} \, (D f)^s \, d\nu = (D f(c))^s = 0\,.
\]
Furthermore, if we remove from $\mathcal{P}(S^1)$ the point masses at the critical points of $f$, then we lose compactness, which is essential to apply the Schauder-Tychonoff fixed point theorem.
\end{remark}

\section{Bounds for automorphic measures} \label{bounds}

In this section, we dive further into the fine-scale structure of $f$-automorphic measures of exponent $s > 0$. For this entire section, fix some $s > 0$ and $\nu \in \auto{s}$, and let $B \subset S^1$ be an arbitrary Borel $f$-invariant set.

In what follows, the ratio
\begin{equation} \label{eq ratio 1}
	\frac{\nu(I \cap B)}{\absol{I}^s}
\end{equation}
where $I$ is an interval, will play a fundamental role. Hence we introduce the special notation:
\begin{equation} \label{eq ratio 2}
	\rat(I) \eqdef \frac{\nu(I \cap B)}{\absol{I}^s} \, .
\end{equation}

The following theorem is the main result of this section.

\begin{restatable}{theorem}{gencomp} \label{general comparability of ratios} There exists a constant $B = B(f, s) > 1$ with the following property. For any critical point $c$ of $f$, sufficiently large $n$ and $\Delta_1, \Delta_2 \in \mathcal{P}_n(c)$, we have:
\begin{enumerate}
    \item[(a)] If $\Delta_1, \Delta_2$ are both long atoms or both short atoms of $\mathcal{P}_n(c)$, then
    \begin{equation} \label{eq comp ratios same type}
    B^{-1} \, \rat(\Delta_2) \leq \rat(\Delta_1) \leq B \, \rat(\Delta_2) \, .
    \end{equation}
    \item[(b)] If $\Delta_1$ is a short atom and $\Delta_2$ is a long atom of $\mathcal{P}_n(c)$, then
    \begin{equation} \label{eq comp ratios diff type}
        \rat(\Delta_1) \leq B \, \rat(\Delta_2) \, .
    \end{equation}
\end{enumerate}
\end{restatable}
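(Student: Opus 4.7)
The plan is to combine the Koebe distortion principle (Lemma \ref{Koebe}) with the automorphic identity \eqref{AutIteration} to compare $\rat$-values of atoms related by iteration, using the real bounds (Theorem \ref{real bounds}) to supply both the geometric input for Koebe and the bounded-geometry control of the ambient partition. The key observation throughout is that, because $B$ is $f$-invariant, $f^{k}(I \cap B) = f^k(I) \cap B$, so \eqref{AutIteration} gives
\[
\nu\bigl(f^k(I)\cap B\bigr) \;=\; \int_{I\cap B}\,(Df^k)^s\,d\nu
\]
for any interval $I$, turning comparisons of $\rat$ into distortion estimates for $Df^k$.

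For part (a), I consider two long atoms $\Delta_{\ell}=f^{i_{\ell}}(I_n(c))$, $\ell=1,2$, of $\mathcal{P}_n(c)$. I apply Koebe to $f^{i_{\ell}}$ on the interval $I_n(c)$ thickened by its two $\mathcal{P}_n(c)$-neighbors: the real bounds (Theorem \ref{real bounds}) provide the Koebe space $\tau$, and the bounded multiplicity of intersection of the family of iterates of this thickening (Remark \ref{mult intersec and summability}) supplies the summability hypothesis \eqref{eq1 Koebe}. The minor issue that $c$ sits at an endpoint of $I_n(c)$ is handled by pushing forward one step and applying Koebe to $f^{i_{\ell}-1}$ on a thickening of $f(I_n(c))$, whose closure avoids the critical set, treating the case $i_{\ell}=0$ trivially. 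This yields $(Df^{i_{\ell}}(x))^s \asymp (|\Delta_{\ell}|/|I_n(c)|)^s$ uniformly for $x \in I_n(c)$, and hence $\rat(\Delta_{\ell}) \asymp \rat(I_n(c))$, proving the long-atom case. The short-atom case is parallel, taking $I_{n+1}(c)$ (resp.\ $f(I_{n+1}(c))$) as the reference atom.

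For part (b), the plan is to derive the recurrence
\[
\rat(I_n(c)) \;\asymp\; \rat(I_{n+1}(c)) \,+\, \rat(I_{n+2}(c)),
\]
with constants depending only on $f$ and $s$, from which the lower bound immediately gives $\rat(I_{n+1}(c)) \leq C\,\rat(I_n(c))$; part (a) then extends this to every short-long pair, delivering \eqref{eq comp ratios diff type}. To set up the recurrence, note that $I_{n+2}(c)\subset I_n(c)$ (they share endpoint $c$ and lie on the same side), so $\mathcal{P}_{n+1}(c)$ restricts to a subdivision of $I_n(c)$ into $I_{n+2}(c)$ together with the long atoms of $\mathcal{P}_{n+1}(c)$ lying inside $I_n(c)$, say $\{\Delta_k\}$. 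Decomposing $\nu(I_n(c)\cap B)$ across this subdivision and substituting $\nu(\Delta_k\cap B)\asymp\rat(I_{n+1}(c))\,|\Delta_k|^s$ via part (a) at level $n+1$, one obtains
\[
\nu(I_n(c)\cap B) \;\asymp\; \rat(I_{n+2}(c))\,|I_{n+2}(c)|^s \,+\, \rat(I_{n+1}(c))\,\sum_k |\Delta_k|^s.
\]
Dividing by $|I_n(c)|^s$ and using $|I_{n+2}(c)|\asymp|I_n(c)|$ (real bounds), the recurrence then follows provided $\sum_k |\Delta_k|^s \asymp |I_n(c)|^s$. The hard part—and main obstacle—will be precisely this uniform two-sided bound on $\sum_k |\Delta_k|^s$, as the family $\{\Delta_k\}$ may contain arbitrarily many atoms when the continued fraction digits of $\rho(f)$ are unbounded; the key will be to combine real bounds at level $n+1$ (which forces the atoms adjacent to $I_{n+2}(c)$ to have size $\asymp|I_{n+1}(c)|\asymp|I_n(c)|$, giving the lower bound) with a Koebe-based argument applied to iterates of $I_{n+1}(c)$ that generate the family $\{\Delta_k\}$ to control the sum from above.
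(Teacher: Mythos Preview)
Your plan for part (a) has a genuine gap. You propose to apply Koebe (Lemma~\ref{Koebe}) to $f^{i_\ell}$ on a thickening of $I_n(c)$, handling only the ``endpoint issue'' by pushing forward one step. But Lemma~\ref{Koebe} requires \emph{every} intermediate iterate $T, f(T),\dots,f^{k-1}(T)$ to avoid the critical set, and this fails: as the thickening is iterated up to $q_{n+1}$ times, it necessarily passes through critical points (indeed, by Lemma~\ref{bounding mult intersec} each critical point lies in several iterates of $\Delta^*$, and for $N\geq 2$ the other critical points $c_2,\dots,c_N$ are encountered as well). So a single Koebe application cannot deliver the two-sided bound $(Df^{i_\ell})^s \asymp (|\Delta_\ell|/|I_n(c)|)^s$ on all of $I_n(c)$.

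This is not a cosmetic issue: at a critical passage the only available control is the \emph{one-sided} inequality of Proposition~\ref{prop 5.4 dFG21}, namely $Df(x)\leq 3d\,|f(J)|/|J|$, which via \eqref{AutIteration} gives $\rat(I^{j+1})\leq (3d)^s\,\rat(I^j)$ but \emph{not} the reverse. The paper's argument is organized around exactly this asymmetry. One first identifies the finitely many ``critical times'' (Definition~\ref{critical time of type 1}); between consecutive critical times Koebe gives genuine $\rat$-comparability (Lemma~\ref{comparability of ratios 1}), while across each critical time one only has one-directional $\rat$-domination (Lemma~\ref{critical times dominate}(a)). Chaining these yields the forward-in-time domination $I^{\ell_1}\succeq I^{\ell_2}$ for $\ell_1<\ell_2$ (Lemma~\ref{critical times dominate}(b)), which by itself is weaker than your claimed comparability. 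The missing direction is recovered not by distortion at all, but by a set-theoretic ``closing of the loop'': $I_n^{q_{n+1}}\supset I_{n+1}$ with comparable lengths gives $I_n^{q_{n+1}}\succeq I_{n+1}$ (this is part (b), Lemma~\ref{comparability of ratios short-long}), and then $I_n\subset I_n^{q_{n+1}}\cup I_{n+1}^{q_n}$ yields $I_n^{q_{n+1}}\succeq I_n$, completing the two-sided bound for long atoms (Theorem~\ref{most general comparability of ratios}(a)).

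Your approach to part (b) via the recurrence $\rat(I_n)\asymp\rat(I_{n+1})+\rat(I_{n+2})$ is more elaborate than needed and, as written, inherits the gap in (a) since it invokes part (a) at level $n+1$. Note also that the full two-sided recurrence would require $\sum_k|\Delta_k|^s\asymp|I_n|^s$; the lower bound is immediate from the real bounds, but the upper bound for $s<1$ with $a_{n+1}$ unbounded would need a geometric-decay statement for the level-$(n{+}1)$ atoms inside $I_n$ that is not supplied by Theorem~\ref{real bounds} alone. The paper bypasses all of this: once the forward domination of Lemma~\ref{critical times dominate}(b) is in hand, the single containment $I_n^{q_{n+1}}\supset I_{n+1}$ together with $|I_n^{q_{n+1}}|\asymp|I_{n+1}|$ already gives long $\succeq$ short.
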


\subsection{Fundamental estimates on distortion} \label{distortion-estimates}

We must now introduce a bit of notation. For the rest of this paper, we fix a critical point $c$ of $f$, and we write simply $\mathcal{P}_n$ in place of $\mathcal{P}_n(c)$. Furthermore, if $I \subset S^1$ is an interval, we write $I^k$ for $f^k(I)$. For any $n \geq 0$ and any atom $\Delta \in \mathcal{P}_n$, we write $\Delta^\ast$ for the reunion of $\Delta$ with its two adjacent atoms, $L$ and $R$, in $\mathcal{P}_n$. For example, if $\Delta = I_n$, then
\[
	\Delta^\ast = I_{n+1} \cup I_n \cup I_n^{q_n} \, .
\]
We also write $\chum{\Delta}$ for the following interval. First write $\Delta$ as a reunion of atoms of $\mathcal{P}_{n+2}$ and let $L_1, R_1$ be the leftmost and rightmost atoms of $\mathcal{P}_{n+2}$ in this reunion, respectively; we then take $\chum{\Delta} = L_2 \cup \Delta \cup R_2$, where $L_2, R_2$ are the atoms of $\mathcal{P}_{n+2}$ left-adjacent to $L_1$ and right-adjacent to $R_1$, respectively. For example, if $\Delta = I_n$, then
\begin{equation} \label{delta star}
\chum{\Delta} = I_{n+3} \cup I_n \cup I_{n+2}^{q_n} \, .
\end{equation}

Lastly, we write $\chdois{\Delta}$ for the following interval. If $\Delta^\ast = L \cup \Delta \cup R$, $L^\ast = (L)_2 \cup L \cup \Delta$ and $R^\ast = \Delta \cup R \cup (R)_2$, we shall write
\begin{equation} \label{delta star star}
    \chdois{\Delta} = (L)_2^\ast \cup \Delta \cup (R)_2^\ast = (L)_3 \cup (L)_2 \cup L \cup \Delta \cup R \cup (R)_2 \cup (R)_3 \supset \Delta^\ast \, .
\end{equation}
For example, if $\Delta = I_n$ and $a_n \geq 5$, then
\[
	\chdois{\Delta} = I_n^{q_{n+1} - 2 q_n} \cup I_n^{q_{n+1} - q_n} \cup I_{n+1} \cup I_n \cup I_n^{q_n} \cup I_n^{2 q_n} \cup I_n^{3 q_n} \, .
\]

\begin{figure}[!ht]
    \centering
    \begin{tikzpicture}
    
    \begin{scope}[thick]
    
    \draw (-7, 2) -- (7, 2);
    \draw (-3, 1) -- (3, 1);
    \draw (-1.4, 0) -- (1.4, 0);
    
    \end{scope}
    
    \draw (0, 2.1) node[anchor=south] {$\Delta$};
    \draw (0, 1.1) node[anchor=south] {$\Delta$};
    \draw (0, 0.1) node[anchor=south] {$\Delta$};
    \draw (-2, 2.1) node[anchor=south] {$L$};
    \draw (-2, 1.1) node[anchor=south] {$L$};
    \draw (2, 2.1) node[anchor=south] {$R$};
    \draw (2, 1.1) node[anchor=south] {$R$};
    \draw (-4, 2.1) node[anchor=south] {$(L)_2$};
    \draw (4, 2.1) node[anchor=south] {$(R)_2$};
    \draw (-6, 2.1) node[anchor=south] {$(L)_3$};
    \draw (6, 2.1) node[anchor=south] {$(R)_3$};
    \draw (-1.2, 0.1) node[anchor=south] {$L_2$};
    \draw (1.2, 0.1) node[anchor=south] {$R_2$};
    \draw (8, 2) node[anchor=west] {$\chdois{\Delta}$};
    \draw (4, 1) node[anchor=west] {$\Delta^\ast$};
    \draw (2, 0) node[anchor=west] {$\chum{\Delta}$};
    
    \filldraw[color=black] (-7, 2) circle (0.05cm);
    \filldraw[color=black] (-5, 2) circle (0.05cm);
    \filldraw[color=black] (-3, 2) circle (0.05cm);
    \filldraw[color=black] (-1, 2) circle (0.05cm);
    \filldraw[color=black] (1, 2) circle (0.05cm);
    \filldraw[color=black] (3, 2) circle (0.05cm);
    \filldraw[color=black] (5, 2) circle (0.05cm);
    \filldraw[color=black] (7, 2) circle (0.05cm);
    \filldraw[color=black] (-3, 1) circle (0.05cm);
    \filldraw[color=black] (-1, 1) circle (0.05cm);
    \filldraw[color=black] (1, 1) circle (0.05cm);
    \filldraw[color=black] (3, 1) circle (0.05cm);
    \filldraw[color=black] (-1.4, 0) circle (0.05cm);
    \filldraw[color=black] (-1, 0) circle (0.05cm);
    \filldraw[color=black] (1, 0) circle (0.05cm);
    \filldraw[color=black] (1.4, 0) circle (0.05cm);
    
	\end{tikzpicture}
	\caption{The intervals $\Delta^\ast$, $\chum{\Delta}$ and $\chdois{\Delta}$.}
    \label{fig: delta ast chum and chdois}
\end{figure}

Of course, if $n$ is small, it may be that $\mathcal{P}_n$ has at most $7$ atoms, so in this case we would have $\chdois{\Delta} = S^1$. Thus, when dealing with $\chdois{\Delta}$, we alwaus assume that $n$ is sufficiently large for $\chdois{\Delta}$ to be a proper interval.

To provide the bounds on distortion needed for the rest of this paper, we shall need the following combinatorial facts. Although their proofs are somewhat involved, the techniques used are standard. Accordingly, we have decided to omit these proofs.

Recall from \S \ref{prelim} that, given a family of intervals $\mathcal{F}$ on $S^{1}$ and a positive integer $m$, we say that $\mathcal{F}$ has multiplicity of intersection at most $m$ if each $x\in S^{1}$ belongs to at most $m$ elements of $\mathcal{F}$. 

\begin{lemma} \label{bounding mult intersec} Let $n \geq 0$, $\Delta \in \mathcal{P}_n$. Then:
\begin{enumerate}
\item[(a)] the collection $\listset{f^k(\Delta^\ast)}_{k=0}^{q_{n+1}-1}$ has intersection multiplicity at most $3$;
\item[(b)] the collection $\listset{f^k(\chum{\Delta})}_{k=0}^{q_{n+1}-1}$ has intersection multiplicity at most $3$;
\item[(c)] the collection $\listset{f^k(\chdois{\Delta})}_{k=0}^{q_{n+1}-1}$ has intersection multiplicity at most $8$.
\end{enumerate}
\end{lemma}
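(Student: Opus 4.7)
The plan is to derive all three bounds from a single \emph{disjointness principle}: for every atom $A\in\mathcal{P}_n$, the family $\{f^k(A)\}_{k=0}^{q_{n+1}-1}$ consists of $q_{n+1}$ intervals that are pairwise disjoint modulo endpoints. Once this principle is in place, (a) and (c) will follow by decomposing $\Delta^\ast$ and $\chdois{\Delta}$ into their constituent atoms and applying subadditivity of the multiplicity function, while (b) will follow from the simple inclusion $\chum{\Delta}\subset\Delta^\ast$.

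First I would prove the disjointness principle by transferring to the rigid rotation $R_\rho$ via the topological conjugacy provided by Theorem~\ref{yoccoztheorem}. Since disjointness of arcs is preserved by homeomorphisms, it suffices to verify the claim for $R_\rho$. In that model every atom of the dynamical partition is a translate of $I_n$ or $I_{n+1}$, so two iterates $R_\rho^{k_1}(A)=A+k_1\rho$ and $R_\rho^{k_2}(A)=A+k_2\rho$ overlap if and only if $\|(k_2-k_1)\rho\|<|A|$. For $A$ a translate of $I_n$ one has $|A|=\|q_n\rho\|$, and the best approximation property of the continued fraction convergents gives $\|\ell\rho\|\geq\|q_n\rho\|$ for every $\ell\in[1,q_{n+1}-1]$, with equality only at $\ell=q_n$ (in which case the corresponding pair of iterates meet at a single endpoint). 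Since the differences $k_2-k_1$ within the window $\{0,\ldots,q_{n+1}-1\}$ lie precisely in $[1,q_{n+1}-1]$, the iterates of any translate of $I_n$ are pairwise disjoint modulo endpoints. The short case is entirely analogous, with $|A|=\|q_{n+1}\rho\|$ and the inequality $\|\ell\rho\|\geq\|q_{n+1}\rho\|$ applied for $\ell\in[1,q_{n+2}-1]\supset[1,q_{n+1}-1]$.

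With the disjointness principle in hand, (a) reduces to set-theoretic subadditivity: writing $\Delta^\ast=L\cup\Delta\cup R$ with $L,\Delta,R\in\mathcal{P}_n$, for any $x\in S^1$ we have
\[
\#\{k\in[0,q_{n+1}-1]:x\in f^k(\Delta^\ast)\}\leq\sum_{A\in\{L,\Delta,R\}}\#\{k:x\in f^k(A)\}\leq 3,
\]
since each summand is at most $1$. Part (c) follows by exactly the same scheme, decomposing $\chdois{\Delta}$ into its seven adjacent $\mathcal{P}_n$-atoms $(L)_3,(L)_2,L,\Delta,R,(R)_2,(R)_3$ to obtain a multiplicity bound of $7\leq 8$. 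Part (b) then comes for free: since $\mathcal{P}_{n+2}$ refines $\mathcal{P}_n$, the two $\mathcal{P}_{n+2}$-atoms $L_2$ and $R_2$ flanking $\Delta$ in the definition of $\chum{\Delta}$ are contained in the $\mathcal{P}_n$-atoms $L$ and $R$ respectively, so $\chum{\Delta}\subset\Delta^\ast$ and the multiplicity bound of (a) transfers verbatim to $\chum{\Delta}$.

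The main obstacle is the careful combinatorial bookkeeping when $\Delta$ is a translated atom $f^i(I_n)$ with $i>0$, because the ``wrap-around'' iterate $f^{q_{n+1}}(I_n)$ genuinely overlaps $I_n$ (it is shifted by only $\|q_{n+1}\rho\|=|I_{n+1}|<|I_n|$). This overlap is not a threat, however, because the shifted window $\{i,\ldots,i+q_{n+1}-1\}$ contains $q_{n+1}$ consecutive indices, so any two of them differ by strictly less than $q_{n+1}$; the problematic difference $q_{n+1}$ never appears, and the best approximation inequality applies uniformly over the window. Once this combinatorial check is made, the remainder of the argument is routine.
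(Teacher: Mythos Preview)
Your argument is correct. The paper omits the proof of this lemma entirely, remarking only that the techniques are standard though ``somewhat involved'', so there is no approach to compare against. Your route---the disjointness principle for single atoms $A\in\mathcal{P}_n$ established via the conjugacy to $R_\rho$ and the best approximation property of the convergents, followed by subadditive decomposition of $\Delta^\ast$ and $\chdois{\Delta}$ into three and seven constituent $\mathcal{P}_n$-atoms, with (b) reduced to (a) through the inclusion $\chum{\Delta}\subset\Delta^\ast$---is a clean and legitimate way to obtain the bounds, and indeed yields the slightly sharper constant $7$ in part (c).

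One harmless technicality: since the intervals are closed and iterates $f^{k}(A)$, $f^{k+q_n}(A)$ share a common endpoint, the ``multiplicity at most one per atom'' count can tick up by one at countably many boundary points. This has no effect on any application in the paper, where the multiplicity bound is used solely to control sums of lengths or measures (Remark~\ref{mult intersec and summability}).
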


We shall need the following consequence of the Real Bounds.

\begin{lemma} \label{comparability of a bunch of images of atoms} There exists a constant $C_2 = C_2(f) \geq C > 1$ with the following property. For $n \geq 0$, let
\begin{equation*} 
\mathcal{C}_n \eqdef \listset{I_n^j}_{j = 0}^{2 q_{n+1}} \cup \listset{I_{n+1}^k}_{k = 0}^{q_n + q_{n+1}}
\end{equation*}
be the set of all atoms of $\mathcal{P}_n$, together with their forward images under $f$ up to iterate $q_{n+1} + 1$. Then, for any $J_1, J_2 \in \mathcal{C}_n$ that share a common endpoint,
\begin{equation} \label{comp bunch}
C_2^{-1} \absol{J_1} \leq \absol{J_2} \leq C_2 \absol{J_1} \, .
\end{equation}
\end{lemma}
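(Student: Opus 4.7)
The plan is to reduce every endpoint-sharing comparability in $\mathcal{C}_n$ to the Real Bounds (Theorem~\ref{real bounds}) via a bounded-distortion transport along forward iterates of $f$. First I would classify the pairs $J_1, J_2 \in \mathcal{C}_n$ that share a common endpoint. Since the endpoints of $I_n^j$ are $f^j(c)$ and $f^{j+q_n}(c)$, and those of $I_{n+1}^k$ are $f^k(c)$ and $f^{k+q_{n+1}}(c)$, the condition of sharing an endpoint reduces to one of the combinatorial relations $j_2 = j_1 + q_n$; $k_2 = k_1 + q_{n+1}$; $j = k$; $j = k+q_{n+1}$; $j+q_n = k$; or $j+q_n = k+q_{n+1}$. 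In each case there is a common exponent $m \leq 2q_{n+1}+1$ such that $f^{-m}(J_1)$ and $f^{-m}(J_2)$ is a pair of adjacent atoms in $\mathcal{P}_n(c)$ (or $\mathcal{P}_{n+1}(c)$) sharing an endpoint at one of the base points $c$, $f^{q_n}(c)$, or $f^{q_{n+1}}(c)$. By Theorem~\ref{real bounds}, these pulled-back intervals are comparable with the universal constant $C = C(f)$.

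Next, I would transfer this comparability forward to $J_1, J_2$ by invoking the Koebe distortion principle (Lemma~\ref{Koebe}) applied to $f^m$ on an interval containing $f^{-m}(J_1 \cup J_2)$, with Koebe space provided by neighboring atoms --- precisely the auxiliary intervals $\Delta^{\ast}$, $\widetilde{\Delta}$, or $\widehat{\Delta}$ defined in \eqref{delta star}--\eqref{delta star star}. Definite Koebe space in the image follows again from the Real Bounds. The summability hypothesis \eqref{eq1 Koebe} is guaranteed by the intersection-multiplicity estimate of Lemma~\ref{bounding mult intersec} (multiplicity at most $8$), which gives $\ell \leq 8$. Consequently, the distortion of $f^m$ on the enlarged pulled-back interval is bounded by a constant depending only on $f$, and pushing forward the comparability $C$ yields a new constant $C_2 = C_2(f) \geq C$ that works for every pair in $\mathcal{C}_n$.

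The principal obstacle is the no-critical-points hypothesis of Koebe: the intermediate iterates $f^i\bigl(f^{-m}(J_1 \cup J_2)\bigr)$, $0 \leq i \leq m-1$, may contain critical points of $f$ in their interior. To resolve this, I would split the tower at each critical crossing and apply the Koebe principle piecewise on the subsegments between crossings, controlling the single critical step by the non-flatness estimate of Proposition~\ref{prop 5.4 dFG21}: on a critical atom $J$, one has $Df(x) \leq 3d \,|f(J)|/|J|$, which together with the Real Bounds gives a uniform loss of distortion at each crossing. The bounded intersection multiplicity limits the number of critical crossings to a constant depending only on the number $N$ of critical points of $f$, so the accumulated loss is absorbed into the final constant $C_2(f)$. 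Once this tower analysis is in place, \eqref{comp bunch} follows for all endpoint-sharing pairs in $\mathcal{C}_n$ uniformly in $n$.
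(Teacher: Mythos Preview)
The paper actually omits the proof of this lemma, saying only that it is ``a consequence of the Real Bounds'' and that ``the techniques used are standard.'' So there is no paper proof to compare against; your proposal is an attempt to fill in what the authors deemed routine.

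Your overall strategy---pull back an endpoint-sharing pair to adjacent base atoms, compare there via Theorem~\ref{real bounds}, then transport forward with bounded distortion---is the right mechanism in spirit. However, as written it contains a circularity. To apply Lemma~\ref{Koebe} to $f^m$ you must verify definite space of $f^m(M)$ inside $f^m(T)$, i.e.\ at the \emph{image} level. You assert that this ``follows again from the Real Bounds,'' but for $m$ ranging up to $2q_{n+1}$ the flanking intervals in $f^m(T)$ are themselves elements of $\mathcal{C}_n$ that need not be atoms of any $\mathcal{P}_k(c)$; their comparability with $J_1\cup J_2$ is precisely the lemma you are proving. Note that in the paper the logical dependence runs the other way: Lemmas~\ref{Koebe for measure} and~\ref{Koebe for measure 2} are stated to follow \emph{from} Lemma~\ref{comparability of a bunch of images of atoms} (the space hypothesis there is supplied by it), so you cannot borrow that machinery here. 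A smaller point: Lemma~\ref{bounding mult intersec} covers iterates only up to $q_{n+1}-1$, not $2q_{n+1}$; the extension is easy, but it should be said.

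A cleaner route avoids Koebe altogether. Every interval in $\mathcal{C}_n$ is already an atom of $\mathcal{P}_n$ or $\mathcal{P}_{n+1}$ except for a handful of ``overflowing'' ones (the $I_n^j$ with $q_{n+1}\leq j\leq 2q_{n+1}$, and $I_{n+1}^{q_n+q_{n+1}}$ when $a_{n+1}=1$). For endpoint-sharing pairs both of which are atoms of some $\mathcal{P}_k$, Theorem~\ref{real bounds} applies directly. The overflowing intervals satisfy identities such as
\[
I_n^{q_{n+1}+j'}=I_{n+1}^{j'}\cup\bigl(I_n^{j'}\setminus I_{n+1}^{q_n+j'}\bigr),\qquad 0\leq j'\leq q_{n+1},
\]
(compare Figures~\ref{fig: Last elements of Cn}--\ref{fig: Last elements of Cn 2}), expressing each as a union/difference of at most three atoms of $\mathcal{P}_n$ and $\mathcal{P}_{n+1}$ that abut it. The Real Bounds at levels $n$ and $n+1$ then make each overflowing interval comparable to any atom sharing an endpoint with it, via a comparability chain of bounded length. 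This disposes of all endpoint-sharing pairs with a constant depending only on $C(f)$, and there is no circular appeal to Koebe space.
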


The following two lemmas contain the bounds on distortion needed for the rest of this paper. Their proof is a standard application of Koebe's distortion principle, with Lemmas \ref{bounding mult intersec} and \ref{comparability of a bunch of images of atoms} guaranteeing that the corresponding hypotheses on summability and space are satisfied (recall Section \ref{sec Koebe}).

\begin{lemma} \label{Koebe for measure} There exists $B_0 = B_0(f) > 1$ with the following property.
If $\Delta \in \mathcal{P}_n$ and $0 \leq j < k \leq q_{n+1} + 1$ are such that the intervals $f^j(\chum{\Delta}), f^{j+1}(\chum{\Delta}), \dots, f^{k-1}(\chum{\Delta})$ do not contain any critical point of $f$, then the map $f^{k-j} \colon f^j(\Delta) \to f^k(\Delta)$ has distortion bounded by $B_0$, that is
\[
    B_0^{-1} \leq \frac{Df^{k-j}(x)}{Df^{k-j}(y)} \leq B_0 \quad\quad \ttt{for all } x, y \in f^j(\Delta) \, .
\]
\end{lemma}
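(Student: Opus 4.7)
My plan is to apply the Koebe distortion principle (Lemma \ref{Koebe}) directly to the pair $M = f^j(\Delta) \subset T = f^j(\chum{\Delta})$ under the iterate $f^{k-j}$. Three hypotheses must be verified: absence of critical points in the intermediate images, summability of lengths along the orbit, and a uniform lower bound on the Koebe space.

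The first hypothesis is granted by assumption. For the second, Lemma \ref{bounding mult intersec}(b) asserts that the family $\{f^i(\chum{\Delta})\}_{i=0}^{q_{n+1}-1}$ has intersection multiplicity at most $3$, so Remark \ref{mult intersec and summability} yields
\[
\sum_{i=0}^{q_{n+1}-1} |f^i(\chum{\Delta})| \leq 3\,.
\]
Since $k \leq q_{n+1}+1$, at most two extra terms appear in the actual sum, but each has length uniformly bounded via Lemma \ref{comparability of a bunch of images of atoms}, giving a total length bound $\ell = \ell(f)$.

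The third and most delicate hypothesis is the Koebe space: a uniform lower bound for $|f^k(\chum{\Delta}) \setminus f^k(\Delta)|/|f^k(\Delta)|$. Writing $\chum{\Delta} = L_2 \cup \Delta \cup R_2$, the components $L_2, R_2$ are atoms of $\mathcal{P}_{n+2}$ sharing endpoints with the outermost atoms $L_1, R_1$ of $\mathcal{P}_{n+2}$ inside $\Delta$, so the real bounds at level $n+2$ yield $|L_2| \asymp |L_1|$ and $|R_2| \asymp |R_1|$ modulo $f$. Since $k - j \leq q_{n+1}+1 \leq q_{n+3}+1$, the iterates $f^{k-j}(L_2), f^{k-j}(L_1)$ (and likewise on the right) both lie in the set $\mathcal{C}_{n+2}$ from Lemma \ref{comparability of a bunch of images of atoms} and share a common endpoint, so that lemma gives $|f^k(L_2)| \asymp |f^k(L_1)|$ and $|f^k(R_2)| \asymp |f^k(R_1)|$. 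Chaining these comparabilities with the real bounds applied along the boundary of $f^k(\Delta)$ at level $n+2$ produces the desired uniform lower bound $\tau = \tau(f) > 0$ on the Koebe space.

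Having verified all three hypotheses with constants depending only on $f$, Lemma \ref{Koebe} yields the distortion bound $B_0 = K(\ell, \tau, f)$. I expect the main obstacle to be the careful verification of the space bound, since $\chum{\Delta}$ mixes partition scales (thickening a $\mathcal{P}_n$-atom by $\mathcal{P}_{n+2}$-atoms) and one must trace through chains of real-bounds comparabilities at the refined level $n+2$ to control the iterated sizes uniformly in $n$ and $k$; by contrast, the summability and critical-point conditions are essentially immediate from Lemma \ref{bounding mult intersec}(b) and the hypothesis.
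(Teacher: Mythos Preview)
Your proposal is correct and matches the paper's approach exactly: the paper states that the proof is ``a standard application of Koebe's distortion principle, with Lemmas \ref{bounding mult intersec} and \ref{comparability of a bunch of images of atoms} guaranteeing that the corresponding hypotheses on summability and space are satisfied,'' which is precisely the three-step verification you outline.

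One small sharpening of your space argument: establishing $|f^k(L_2)| \asymp |f^k(L_1)|$ via Lemma \ref{comparability of a bunch of images of atoms} at level $n+2$ is correct, but to pass from $|f^k(L_1)|$ up to $|f^k(\Delta)|$ you will in general need to invoke Lemma \ref{comparability of a bunch of images of atoms} at the intermediate levels $n$ and $n+1$ as well (not merely the Real Bounds, which are stated only for atoms of a single $\mathcal{P}_m(c)$, not for their iterates). For example, when $\Delta=I_n$ one chains $|I_n^k|\asymp|I_{n+1}^k|\asymp|I_{n+2}^k|\asymp|I_{n+3}^k|$ using that all the relevant iterates lie in $\mathcal{C}_n$, $\mathcal{C}_{n+1}$, $\mathcal{C}_{n+2}$ respectively (this uses $k\le q_{n+1}+1\le q_{n+2}\le q_{n+3}$). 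Since the chain always has bounded length independent of $n$ and $k$, the resulting constant depends only on $f$, and your plan goes through.
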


\begin{lemma} \label{Koebe for measure 2} There exists $B_1 = B_1(f) > 1$ with the following property.
If $\Delta \in \mathcal{P}_n$ and $0 \leq j < k \leq q_{n+1}$ are such that the intervals $f^j(\chdois{\Delta}), f^{j+1}(\chdois{\Delta}), \dots, f^{k-1}(\chdois{\Delta})$ do not contain the critical point of $f$, then the map $f^{k-j} \colon f^j(\Delta^\ast) \to f^k(\Delta^\ast)$ has distortion bounded by $B_1$, that is
\[
    B_1^{-1} \leq \frac{Df^{k-j}(x)}{Df^{k-j}(y)} \leq B_1 \quad\quad \ttt{for all } x, y \in f^j(\Delta^\ast)  \, .
\]
\end{lemma}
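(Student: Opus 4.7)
The plan is to apply the Koebe distortion principle (Lemma \ref{Koebe}) directly to the map $f^{k-j}$ on $f^j(\Delta^\ast)$, viewed as compactly contained in $f^j(\chdois{\Delta})$. Three hypotheses must be verified in order to invoke Lemma \ref{Koebe}: the critical-point-free condition on the intermediate iterates (which is the standing hypothesis of our lemma), a summability bound on the total Lebesgue length of these iterates, and a definite lower bound on the space of $f^k(\Delta^\ast)$ inside $f^k(\chdois{\Delta})$.

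For the summability, since $j < k \leq q_{n+1}$, the family $\{f^i(\chdois{\Delta})\}_{i=j}^{k-1}$ is a sub-family of $\{f^i(\chdois{\Delta})\}_{i=0}^{q_{n+1}-1}$, which by Lemma \ref{bounding mult intersec}(c) has intersection multiplicity at most $8$. Remark \ref{mult intersec and summability} then gives
\[
\sum_{i=j}^{k-1}\absol{f^i(\chdois{\Delta})} \;\leq\; 8,
\]
so \eqref{eq1 Koebe} holds with $\ell = 8$.

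For the space estimate, recall from \eqref{delta star star} that $\chdois{\Delta}$ is a union of seven consecutive atoms $(L)_3, (L)_2, L, \Delta, R, (R)_2, (R)_3$ of $\mathcal{P}_n$, with $\Delta^\ast = L \cup \Delta \cup R$ sitting in the middle. Since $k \leq q_{n+1}$, the $f^k$-images of each of these seven atoms are of the form $I_n^{i}$ with $i \leq 2q_{n+1}$ or $I_{n+1}^{i}$ with $i \leq q_n + q_{n+1}$, and hence all belong to $\mathcal{C}_n$; moreover, they remain consecutive on $S^1$ since $f^k$ is an orientation-preserving homeomorphism. Applying Lemma \ref{comparability of a bunch of images of atoms} to each adjacent pair along this string of seven intervals, one finds that the seven $f^k$-images are mutually comparable up to a constant depending only on $f$. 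In particular, the two outer pieces $f^k((L)_3 \cup (L)_2)$ and $f^k((R)_2 \cup (R)_3)$ each have length comparable to $\absol{f^k(\Delta^\ast)}$, yielding the desired lower bound $\tau = \tau(f) > 0$ on the space of $f^k(\Delta^\ast)$ inside $f^k(\chdois{\Delta})$.

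With the three ingredients in hand, Lemma \ref{Koebe} delivers the distortion constant $B_1 = K(8, \tau, f)$. The only (mild) obstacle is checking that the $f^k$-images of the outermost atoms $(L)_3$ and $(R)_3$ still lie in $\mathcal{C}_n$, which is precisely what forces the tight hypothesis $k \leq q_{n+1}$ in the statement, slightly more restrictive than the $k \leq q_{n+1}+1$ allowed in Lemma \ref{Koebe for measure} (where only three consecutive atoms were needed for the corresponding space argument).
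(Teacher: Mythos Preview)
Your proof is correct and follows precisely the approach the paper indicates: apply the Koebe distortion principle (Lemma~\ref{Koebe}) using Lemma~\ref{bounding mult intersec}(c) for the summability hypothesis and Lemma~\ref{comparability of a bunch of images of atoms} for the space hypothesis. Your verification that all seven $f^k$-images of the atoms composing $\chdois{\Delta}$ lie in $\mathcal{C}_n$ when $k \leq q_{n+1}$, and hence are mutually comparable via a chain of bounded length, is exactly the intended mechanism.

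One small remark on your closing paragraph: the explanation for why the statement requires $k \leq q_{n+1}$ rather than $k \leq q_{n+1}+1$ is not quite right. Even with $k = q_{n+1}+1$, the images of all seven atoms would still land in $\mathcal{C}_n$ (since a long atom $I_n^j$ with $j \leq q_{n+1}-1$ maps to $I_n^{j+k}$ with $j+k \leq 2q_{n+1}$, and similarly for short atoms). The restriction comes rather from matching the range in Lemma~\ref{bounding mult intersec}(c), which covers only $\{f^i(\chdois{\Delta})\}_{i=0}^{q_{n+1}-1}$. Also, note that in Lemma~\ref{Koebe for measure} the flanking pieces of $\chum{\Delta}$ are atoms of $\mathcal{P}_{n+2}$, not of $\mathcal{P}_n$, so the space argument there is structured somewhat differently than ``three consecutive atoms of $\mathcal{P}_n$''. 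These are side comments and do not affect the validity of your argument.
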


We remark that $\chdois{\Delta}$ and Lemma \ref{Koebe for measure 2} will not be mentioned further in this section, but will play a fundamental role in \S \ref{ergodic}.

\subsection{$\rat$-domination and comparability} \label{rat dom comp}

To simplify both the statement and the proof of the remaining results in this section, we introduce the notions of {\it $\rat$-domination} and {\it $\rat$-com\-pa\-ra\-bi\-li\-ty} between intervals. If $I, J \subset S^1$ are intervals, we shall say that {\it $I$ $\rat$-dominates $J$} (and write $I \succeq J$) if there is some constant $K = K(f, s) > 1$ (depending {\it only} on $f$ and $s$, but not on $\nu$ or $B$) such that
\begin{equation} \label{eq ratio domination}
\rat(J) \leq K \rat(I).
\end{equation}
Similarly, we say that $I, J$ are {\it $\rat$-comparable} (and write $I \ratcomp J$) if $I \succeq J$ and $J \succeq I$; that is, if there is some constant $K = K(f, s) > 1$ such that
\begin{equation} \label{eq ratio comparability}
K^{-1} \rat(I) \leq \rat(J) \leq K \rat(I).
\end{equation}

\begin{definition} \label{critical time of type 1} Let $\Delta \in \mathcal{P}_n$, $0 \leq k < q_{n+1}$. We shall say that $k$ is a {\it critical time of type 1} for $\Delta$ if $f^k(\chum{\Delta}) \cap \crit{f} \neq \varnothing$.
\end{definition}

Since $f$ has $N$ critical points $c_1, \dots, c_N$ and the collection $\listset{f^k(\chum{\Delta})}_{k=0}^{q_{n+1}-1}$ has intersection multiplicity at most $3$ (see Lemma \ref{bounding mult intersec}), it follows that, for any $n \geq 0$ and $\Delta \in \mathcal{P}_n$, there are at most $3N$ critical times of type 1 for $\Delta$. 

\begin{remark} \label{one critical point from critical time} It follows easily from the minimality of $f$ that there exists some level $n_0 = n_0(f) \in \N$, depending only on $f$, such that, for all $n \geq n_0$, $\Delta \in \mathcal{P}_n$ and $0 \leq k < q_{n+1}$ a critical time of type 1 for $\Delta$, we have that: (i) $f^k(\chum{\Delta})$ contains a {\it single} critical point of $f$; and (ii) $f^k(\chum{\Delta}) \subset U$, where $U$ is the interval about the critical point of $f$ in $f^k(\chum{\Delta})$ from Proposition \ref{prop 5.4 dFG21}.
\end{remark}

The following lemma tells us what happens to the ratios $\rat(I)$ as we iterate $f$ while staying (combinatorially) far away from the critical points of $f$.

\begin{lemma} \label{comparability of ratios 1} Let $n \geq 0$ and $\Delta \in \mathcal{P}_n$. Then, for any interval $I \subset \Delta$ and $0 \leq j < k \leq q_{n+1} + 1$ such that the intervals $f^j(\chum{\Delta}), \dots, f^{k-1}(\chum{\Delta})$ do not contain any critical point of $f$,
\begin{equation} \label{eq comp ratios 1}
B_0^{-s} \, \rat(I^j) \leq \rat(I^k) \leq B_0^s \, \rat(I^j) \, .
\end{equation}
\end{lemma}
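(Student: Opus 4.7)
The plan is to reduce the statement to a direct application of the distortion bound already packaged in Lemma \ref{Koebe for measure}, combined with the automorphic transformation rule \eqref{AutIteration}. The no-critical-point hypothesis ensures $f^{k-j}$ restricts to a $C^3$ diffeomorphism from $\chum{\Delta}^j$ onto $\chum{\Delta}^k$, and in particular from $I^j$ onto $I^k$. Since $B$ is $f$-invariant (i.e.\ $f^{-1}(B)=B$), this injectivity yields the set-theoretic identity $f^{k-j}(I^j\cap B)=I^k\cap B$, which is the only point in the proof that uses the hypothesis on $B$.

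With that identification in hand, I would apply the automorphic identity \eqref{AutIteration} with $A=I^j\cap B$ and iterate $k-j$ to obtain
\[
\nu(I^k\cap B)\;=\;\int_{I^j\cap B}\bigl(Df^{k-j}\bigr)^{s}\,d\nu,
\]
and simultaneously use the ordinary change of variables for Lebesgue measure to control the length:
\[
\Bigl(\inf_{I^j}Df^{k-j}\Bigr)\,|I^j|\;\leq\;|I^k|\;\leq\;\Bigl(\sup_{I^j}Df^{k-j}\Bigr)\,|I^j|.
\]
Raising the length estimate to the $s$-th power and dividing, I get
\[
\rat(I^k)\;=\;\frac{\nu(I^k\cap B)}{|I^k|^s}\;\leq\;\left(\frac{\sup_{I^j}Df^{k-j}}{\inf_{I^j}Df^{k-j}}\right)^{\!s}\rat(I^j),
\]
together with the reversed inequality obtained by swapping $\sup$ and $\inf$.

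Finally, Lemma \ref{Koebe for measure} applied to $f^{k-j}\colon f^j(\Delta)\to f^k(\Delta)$ gives a distortion bound of $B_0$ on all of $f^j(\Delta)$, and since $I^j\subset f^j(\Delta)=\Delta^j$, the same bound holds on $I^j$, i.e.\ $\sup_{I^j}Df^{k-j}\leq B_0\,\inf_{I^j}Df^{k-j}$. Inserting this into the two inequalities above yields the desired sandwich $B_0^{-s}\,\rat(I^j)\le\rat(I^k)\le B_0^{s}\,\rat(I^j)$.

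There is no real obstacle: the lemma is essentially a bookkeeping exercise combining \eqref{AutIteration} with Koebe. The only point that needs verification, rather than computation, is that the use of $\sup$ in the numerator versus $\inf$ in the denominator (rather than a symmetric estimate via a common reference point in $I^j$) yields the constant $B_0^{s}$ stated in the conclusion, and not the weaker $B_0^{2s}$; this is the reason for measuring the length of $I^k$ and the $\nu$-mass of $I^k\cap B$ by separate one-sided bounds rather than a single multiplicative constant.
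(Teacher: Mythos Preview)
Your proof is correct and follows essentially the same route as the paper: combine the automorphic transformation rule \eqref{AutIteration} with the Koebe distortion bound from Lemma~\ref{Koebe for measure}. The only cosmetic difference is that the paper invokes the Mean Value Theorem to pin down a reference derivative $Df^{k-j}(z)=|I^k|/|I^j|$ and then applies the Koebe bound relative to that point, whereas you estimate numerator and denominator directly by $\sup$ and $\inf$ and then use $\sup\le B_0\inf$; both yield the same constant $B_0^{s}$.
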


\begin{proof} Indeed, by Lemma \ref{Koebe for measure}, the distortion of $f^{k-j}$ in $\Delta^j$ is bounded by $B_0$. By the Mean Value Theorem, there exists $z \in I^j$ such that
\[
    D f^{k-j}(z) = \frac{\absol{I^k}}{\absol{I^j}} \, .
\]
Thus, for any $x \in \Delta^j \supset I^j$,
\begin{equation} \label{eq1 comp ratios 1}
    B_0^{-1} \frac{\absol{I^k}}{\absol{I^j}} \leq D f^{k-j}(x) \leq B_0 \frac{\absol{I^k}}{\absol{I^j}} \, .
\end{equation}

From equation \eqref{AutIteration}, we have
\begin{equation*}
    \rat(I^k) = \absol{I^k}^{-s} \, \int_{I^j \cap B} \, (D f^{k-j}(x))^s \, d\nu(x)
\end{equation*}
so, from \eqref{eq1 comp ratios 1}, we get
\[
    B_0^{-s} \frac{\absol{I^k}^s}{\absol{I^j}^s} \absol{I^k}^{-s} \nu(I^j \cap B) \leq \rat(I^k) \leq B_0^s \frac{\absol{I^k}^s}{\absol{I^j}^s} \absol{I^k}^{-s} \nu(I^j \cap B)
\]
which is \eqref{eq comp ratios 1}.
\end{proof}

The next result is now an easy corollary of Lemma \ref{comparability of ratios 1}:

\begin{coro} \label{comparability of ratios between critical times} Let $n \geq 0$, $\Delta \in \mathcal{P}_n$, and let $0 \leq k_1 < k_2 < \cdots < k_r < q_{n+1}$  be the critical times of type 1 for $\Delta$. Then, for any interval $I \subset \Delta$,
\begin{enumerate}
\item[(a)] $I, I^1, \dots, I^{k_1}$ are pairwise $\rat$-comparable;
\item[(b)] For $1 \leq j < r$, $I^{k_j+1}, I^{k_j+2}, \dots, I^{k_{j+1}}$ are pairwise $\rat$-comparable;
\item[(c)] $I^{k_r+1}, \dots, I^{q_{n+1}}$ are pairwise $\rat$-comparable,
\end{enumerate}
all with constant $K(f, s) = B_0^s$.
\end{coro}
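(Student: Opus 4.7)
The plan is to apply Lemma \ref{comparability of ratios 1} directly, after observing that its hypothesis is satisfied for \emph{any} pair of intermediate iterates in each of the three ranges listed in the corollary. Recall that the lemma gives the $\rat$-comparability $B_0^{-s}\rat(I^j)\leq \rat(I^k) \leq B_0^s \rat(I^j)$ whenever $0\leq j<k\leq q_{n+1}+1$ and none of the intervals $f^j(\chum{\Delta}), f^{j+1}(\chum{\Delta}), \dots, f^{k-1}(\chum{\Delta})$ contains a critical point of $f$. Hence, once the combinatorial hypothesis is checked, a single application of the lemma to the pair $(j,k)$ yields pairwise comparability with the uniform constant $B_0^s$ (there is no need to chain estimates, so the constant does not deteriorate).

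For part (a), I would fix any $0\leq j_1<j_2\leq k_1$. Since $k_1$ is, by definition, the \emph{first} critical time of type $1$ for $\Delta$, the intervals $f^k(\chum{\Delta})$ contain no critical point of $f$ for every $0\leq k<k_1$; in particular this holds for $k=j_1,j_1+1,\dots,j_2-1$ (note $j_2-1<k_1$). Lemma \ref{comparability of ratios 1} applied to $(j_1,j_2)$ then gives $\rat(I^{j_1}) \ratcomp \rat(I^{j_2})$ with constant $B_0^s$, proving (a).

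For part (b), fix $1\leq j<r$ and any $k_j+1\leq j_1<j_2\leq k_{j+1}$. The indices $j_1,j_1+1,\dots,j_2-1$ all lie strictly between $k_j$ and $k_{j+1}$, so none of them is a critical time of type $1$ for $\Delta$; equivalently, none of the intervals $f^{j_1}(\chum{\Delta}),\dots,f^{j_2-1}(\chum{\Delta})$ meets $\crit{f}$. Lemma \ref{comparability of ratios 1} applies and produces the desired comparability with the same constant $B_0^s$. Part (c) is entirely analogous: for $k_r+1\leq j_1<j_2\leq q_{n+1}$, every index in $\{j_1,\dots,j_2-1\}$ exceeds $k_r$, hence is not a critical time of type $1$, and Lemma \ref{comparability of ratios 1} concludes.

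There is no genuine obstacle; the result is essentially an unpacking of the definition of critical time of type $1$ together with Lemma \ref{comparability of ratios 1}. The only mild care required is at the boundaries of each range: one must start strictly after each critical time $k_j$ (since $f^{k_j}(\chum{\Delta})$ does contain a critical point and therefore cannot appear in the admissible chain), and one must verify $j_2\leq q_{n+1}+1$ so that the upper bound in the hypothesis of Lemma \ref{comparability of ratios 1} is respected. Both conditions are built into the statement of the corollary.
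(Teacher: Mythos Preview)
Your argument is correct and is exactly the approach the paper has in mind: the corollary is stated as an immediate consequence of Lemma~\ref{comparability of ratios 1}, and your verification that the non-criticality hypothesis holds on each of the three ranges (with the right endpoint care) is precisely the unpacking required.
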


To proceed, we need to study the behaviour of $\rat$ close to the critical set of $f$. For this purpose, recall that $d > 1$ denotes the maximum of the criticalities of the critical points of $f$.

\begin{lemma} \label{critical times dominate} Let $n \geq n_0$ (from Remark \ref{one critical point from critical time}), $\Delta \in \mathcal{P}_n$, and let $I \subset \Delta$ be an interval.
\begin{enumerate}
\item[(a)] If $0 \leq j < q_{n+1}$ is a critical time of type 1 for $\Delta$, $I^j \succeq I^{j+1}$ with constant $K(f, s) = (3 d)^s$;
\item[(b)] if $0 \leq \ell_1 < \ell_2 \leq q_{n+1}$, $I^{\ell_1} \succeq I^{\ell_2}$, with constant $K(f, s) = (3 d B_0)^{4 N s}$.
\end{enumerate}
\end{lemma}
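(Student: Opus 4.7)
The approach is to combine the non-flatness estimate of Proposition~\ref{prop 5.4 dFG21} with the pullback identity \eqref{AutIteration}. Since $n\geq n_0$ and $j$ is a critical time of type~$1$ for $\Delta$, Remark~\ref{one critical point from critical time} guarantees that $f^j(\chum{\Delta})$ is contained in the neighborhood $U$ from Proposition~\ref{prop 5.4 dFG21} of a single critical point of $f$. Because $I\subset\Delta\subset\chum{\Delta}$, we have $I^j\subset f^j(\chum{\Delta})\subset U$, so taking $J=I^j$ in \eqref{eq prop 5.4 dFG21} yields the pointwise bound $Df(x)\leq 3d\,|I^{j+1}|/|I^j|$ for every $x\in I^j$. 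Using the $f$-invariance of $B$ (so that $f(I^j\cap B)=I^{j+1}\cap B$) together with \eqref{AutIteration} for $n=1$, I obtain
\[
\nu(I^{j+1}\cap B)\;=\;\int_{I^j\cap B}(Df)^s\,d\nu\;\leq\; (3d)^s\,\frac{|I^{j+1}|^s}{|I^j|^s}\,\nu(I^j\cap B),
\]
and dividing both sides by $|I^{j+1}|^s$ gives $\rat(I^{j+1})\leq (3d)^s\,\rat(I^j)$, which is exactly $I^j\succeq I^{j+1}$ with constant $(3d)^s$.

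\textbf{Plan for (b).} The idea is to chain the estimate of (a) at the critical times of type~$1$ with the $\rat$-comparability on the intermediate non-critical blocks supplied by Corollary~\ref{comparability of ratios between critical times}. Let $k_1<\cdots<k_m$ be the critical times of type~$1$ for $\Delta$ lying in $[\ell_1,\ell_2)$. By Lemma~\ref{bounding mult intersec}(b) the family $\{f^k(\chum{\Delta})\}_{k=0}^{q_{n+1}-1}$ has intersection multiplicity at most~$3$, and $f$ has $N$ critical points, so $m\leq 3N$. Corollary~\ref{comparability of ratios between critical times} gives $\rat(I^{k_{j+1}})\leq B_0^s\,\rat(I^{k_j+1})$ on each intermediate block, as well as $\rat(I^{k_1})\leq B_0^s\,\rat(I^{\ell_1})$ and $\rat(I^{\ell_2})\leq B_0^s\,\rat(I^{k_m+1})$ for the initial and final blocks, while part~(a) gives $\rat(I^{k_j+1})\leq (3d)^s\,\rat(I^{k_j})$ at each critical time. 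Multiplying these $m+1\leq 3N+1$ block estimates with the $m\leq 3N$ critical-time estimates yields
\[
\rat(I^{\ell_2})\;\leq\;(B_0^s)^{m+1}\,((3d)^s)^m\,\rat(I^{\ell_1})\;\leq\;(3d\,B_0)^{(3N+1)s}\,\rat(I^{\ell_1})\;\leq\;(3d\,B_0)^{4Ns}\,\rat(I^{\ell_1}),
\]
where the last two inequalities use $3d\,B_0>1$ and $N\geq 1$. This is exactly $I^{\ell_1}\succeq I^{\ell_2}$ with the stated constant.

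\textbf{Expected main obstacle.} No single step is conceptually deep, but the genuine content lies in the asymmetry built into (a): at a critical time the derivative vanishes, so no lower bound on $Df$ is available, and this is precisely why one can only hope for $\rat$-domination rather than $\rat$-comparability across a critical crossing. Beyond this, everything is bookkeeping: Remark~\ref{one critical point from critical time} (which forces the assumption $n\geq n_0$) lets me isolate a single critical point per critical time so that Proposition~\ref{prop 5.4 dFG21} applies cleanly; Lemma~\ref{bounding mult intersec}(b) caps the number of critical times encountered by $3N$; and Corollary~\ref{comparability of ratios between critical times} absorbs the non-critical blocks. The only mild care needed is to verify that the chain still closes in the degenerate cases $m=0$ (no critical times in $[\ell_1,\ell_2)$, handled directly by Corollary~\ref{comparability of ratios between critical times}) and $\ell_1=k_1$ (the initial block is trivial and one begins with part (a)).
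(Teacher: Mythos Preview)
Your proposal is correct and follows essentially the same route as the paper: part~(a) combines Proposition~\ref{prop 5.4 dFG21} (via Remark~\ref{one critical point from critical time}) with the pullback identity~\eqref{AutIteration} exactly as the authors do, and part~(b) chains the $\rat$-comparability blocks from Corollary~\ref{comparability of ratios between critical times} with the one-step domination from~(a), counting at most $3N$ critical times and $3N+1\leq 4N$ non-critical blocks to reach the constant $(3d\,B_0)^{4Ns}$. Your treatment of the degenerate cases is also appropriate.
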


\begin{proof} Observe that part (a) and Corollary \ref{comparability of ratios between critical times} together imply (b), since we can join $I^{\ell_1}$ and $I^{\ell_2}$ by a $\rat$-domination chain as follows: let $\ell_1 \leq k_i < \cdots < k_m < \ell_2$ be the critical times of type $1$ for $\Delta$ between $\ell_1$ and $\ell_2$. Then
\[
	I^{\ell_1} \ratcomp I^{k_i} \succeq I^{k_i + 1} \ratcomp I^{k_{i+1}} \succeq \cdots \ratcomp I^{k_\ell} \succeq I^{k_m + 1} \ratcomp I^{\ell_2} \, .
\]
Since there are at most $6 N + 2$ atoms in this chain, it follows that $I^{\ell_1} \succeq I^{\ell_2}$. To determine the constant of $\rat$-domination, we start with $K(f, s) = 1$ and move along this chain, multiplying by $B_0^s$ for every $\ratcomp$ and by $(3 d)^s$ for every $\succeq$. There are $m - i + 2$ $\ratcomp$'s and $m - i + 1$ $\succeq$'s, so (since $m - i + 1\leq 3N$) we can take
\[
	K(f, s) = (3 d B_0)^{4 N s} \geq (3 d)^{(m - i + 1) s} B_0^{(m - i + 2) s} \, .
\]
Thus, we need only prove (a).

Observe that, from equation \eqref{AutIteration},
\begin{equation} \label{eq1 dom crit}
\rat(I^{j+1}) = \frac{\int_{I^j \cap B} \, (D f)^s \, d\nu}{\absol{I^{j+1}}^s} = \frac{\absol{I^j}^s}{\absol{I^{j+1}}^s} \frac{\int_{I^j \cap B} \, (D f)^s \, d\nu}{\absol{I^j}^s} \, .
\end{equation}

Now, Proposition \ref{prop 5.4 dFG21} implies that
\begin{equation} \label{eq2 dom crit}
(D f(x))^s \leq (3 d)^s \frac{\absol{I^{j+1}}^s}{\absol{I^j}^s} \quad \ttt{for all } x \in I^j \, .
\end{equation}
Combining \eqref{eq1 dom crit} and \eqref{eq2 dom crit}, we get
\begin{equation} \label{eq3 dom crit}
\rat(I^{j+1}) \leq (3 d)^s \rat(I^j)
\end{equation}
which proves (a).
\end{proof}

Before moving forward to the proof of Theorem \ref{general comparability of ratios}, we shall first need a lemma which states, essentially, that {\it long atoms $\rat$-dominate short atoms}. To simplify the proofs of this lemma and of Theorem \ref{general comparability of ratios} below, we shall denote all constants of $\omega$-domination by $K = K(f, s)$.

\begin{lemma} \label{comparability of ratios short-long} Let $n \geq n_0$, $\Delta_1, \Delta_2 \in \mathcal{P}_n \cup \listset{I_n^{q_{n+1}}, I_{n+1}^{q_n}}$. If $\Delta_1$ is a long atom  (or $I_n^{q_{n+1}}$) and $\Delta_2$, a short atom (or $I_{n+1}^{q_n}$), of $\mathcal{P}_n$, then $\Delta_1 \succeq \Delta_2$. Furthermore, if $a_{n+1} \geq 2$ or $a_{n+1} = a_{n+2} = 1$, then 
$\Delta_1^{k_1} \succeq \Delta_2^{k_2}$ for any $0 \leq k_1, k_2 \leq q_{n+1}$. 
\end{lemma}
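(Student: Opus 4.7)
The strategy is to exploit a combinatorial ``wrapping'' observation: the iterate $I_n^{q_{n+1}}$ of the long atom $I_n$ straddles the critical point $c$ and contains the short atom $I_{n+1}$ as a subinterval. Combined with Lemma~\ref{lemma Dfqn} and the Real Bounds, this delivers the base $\rat$-domination $I_n^{q_{n+1}} \succeq I_{n+1}$, from which the first part of the lemma follows by chaining with Lemma~\ref{critical times dominate}.

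Concretely, the endpoints $f^{q_{n+1}}(c)$ and $f^{q_{n+1}+q_n}(c)$ of $I_n^{q_{n+1}}$ lie on opposite sides of $c$: the first is the far endpoint of $I_{n+1}$, and by the standard best-approximation combinatorics of the continued fraction of $\rho$ (transferred from the rigid rotation $R_\rho$ via Yoccoz's conjugacy from Theorem~\ref{yoccoztheorem}), the second lies in the interior of $I_n$ (closer to $c$ than $f^{q_n}(c)$). Thus $I_{n+1} \subset I_n^{q_{n+1}}$. By Lemma~\ref{lemma Dfqn} applied at level $n+1$ and the Real Bounds, $|I_n^{q_{n+1}}| \leq C_1 |I_n| \leq C_1 C\,|I_{n+1}| \eqdef K_0\,|I_{n+1}|$, and hence
\[
\rat(I_{n+1}) = \frac{\nu(I_{n+1} \cap B)}{|I_{n+1}|^s} \leq \frac{\nu(I_n^{q_{n+1}} \cap B)}{|I_{n+1}|^s} \leq K_0^s\,\rat(I_n^{q_{n+1}}),
\]
yielding $I_n^{q_{n+1}} \succeq I_{n+1}$. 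To extend to a general long atom $\Delta_1 = I_n^j$ with $0 \leq j \leq q_{n+1}$, I apply Lemma~\ref{critical times dominate}(b) with $I = \Delta = I_n$, $\ell_1 = j$, $\ell_2 = q_{n+1}$, obtaining $\rat(I_n^{q_{n+1}}) \leq K\,\rat(I_n^j)$, and chain. Similarly, for a general short atom $\Delta_2 = I_{n+1}^k$ with $0 \leq k \leq q_n \leq q_{n+1}$, Lemma~\ref{critical times dominate}(b) with $I = \Delta = I_{n+1}$, $\ell_1 = 0$, $\ell_2 = k$ gives $\rat(I_{n+1}^k) \leq K\,\rat(I_{n+1})$, and a final chain yields $\Delta_1 \succeq \Delta_2$.

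For the second part, the same strategy is carried out one level deeper: the iterates $\Delta_1^{k_1}$ contain appropriate iterates of $I_{n+1}$ (via the analogous wrapping inclusion, now of an iterate of $I_{n+2}$ inside one of $I_{n+1}$), and Lemma~\ref{critical times dominate}(b) at level $n+1$, which admits iterates up to $q_{n+2}$, suffices to chain $\rat$-dominations among all required $I_{n+1}^{\ell}$'s for $\ell \leq q_n + q_{n+1} \leq q_{n+2}$. The combinatorial hypothesis $a_{n+1} \geq 2$ or $a_{n+1} = a_{n+2} = 1$ is precisely what guarantees that the refined wrapping inclusion persists and that all required iterates stay within the permitted range at level $n+1$; in the excluded regime ($a_{n+1}=1$ with $a_{n+2}\geq 2$) the endpoints of $\Delta_1^{k_1+q_{n+1}}$ overshoot $I_{n+1}$ in a way that destroys the direct inclusion. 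The main obstacle is therefore the careful combinatorial verification at level $n+1$ under these hypotheses; the remainder of the argument is a mechanical repetition of the first-part chaining.
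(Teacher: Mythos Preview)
Your treatment of the first assertion is correct and matches the paper's argument essentially verbatim: both use the inclusion $I_{n+1}\subset I_n^{q_{n+1}}$ together with the comparability of their lengths to get $I_n^{q_{n+1}}\succeq I_{n+1}$, and then invoke Lemma~\ref{critical times dominate}(b) on either side to reach arbitrary $\Delta_1,\Delta_2$.

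Your second part, however, is only a sketch, and the sketched route does not obviously close. The assertion ``the iterates $\Delta_1^{k_1}$ contain appropriate iterates of $I_{n+1}$'' would, at best, give an inclusion $\Delta_1^{k_1}=I_n^{j+k_1}\supset I_{n+1}^{q_n+j+k_1}$; but when $a_{n+1}$ is large the ratio $|I_n^{j+k_1}|/|I_{n+1}^{q_n+j+k_1}|$ is unbounded, so this inclusion does \emph{not} yield $\rat$-domination. Moreover, Lemma~\ref{critical times dominate}(b) only gives $I_{n+1}^{\ell_1}\succeq I_{n+1}^{\ell_2}$ for $\ell_1<\ell_2$ (earlier dominates later), so you cannot chain from a \emph{high} iterate $I_{n+1}^{q_n+j+k_1}$ back down to $I_{n+1}$ or to a small $\Delta_2^{k_2}=I_{n+1}^{m+k_2}$. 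The ``mechanical repetition'' you allude to therefore breaks at exactly the point where the real work lies.

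The paper's argument for part~(ii) is structurally different and does not go ``one level deeper''. It first pushes $\Delta_1^{k_1}$ all the way forward to the fixed pivot $I_n^{2q_{n+1}}$ (via Lemma~\ref{critical times dominate} applied twice at level $n$) and reduces everything to the single inequality $I_n^{2q_{n+1}}\succeq I_{n+1}$. This is then established by an explicit case analysis: when $a_{n+1}\geq 2$ one checks directly that $I_{n+1}\subset I_n^{2q_{n+1}}$ with length ratio bounded by $3C$ using the Real Bounds; when $a_{n+1}=a_{n+2}=1$ the inclusion fails, and one instead routes through $I_{n+2}^{q_{n+1}}\subset I_n^{2q_{n+1}}$ and $I_{n+1}\subset I_{n+2}^{q_{n+1}}\cup I_{n+2}^{q_{n+1}+q_{n+2}}$, using Lemma~\ref{comparability of a bunch of images of atoms} for the length comparisons. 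Your outline misses both the fixed pivot $I_n^{2q_{n+1}}$ and this case split; the combinatorial hypothesis enters not to keep iterates ``within the permitted range at level $n+1$'' but to ensure that the pivot $I_n^{2q_{n+1}}$ and $I_{n+1}$ can be directly compared.
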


\begin{proof} We split the proof in two parts: (i) that $\Delta_1 \succeq \Delta_2$; and (ii) that $\Delta_1^{k_1} \succeq \Delta_2^{k_2}$ if $a_{n+1} \geq 2$ or $a_{n+1} = a_{n+2} = 1$.

To prove (i), observe that, from Lemma \ref{critical times dominate}, we have $\Delta_1 \succeq I_n^{q_{n+1}}$ and $I_{n+1} \succeq \Delta_2$. Therefore, (i) will follow if we prove that $I_n^{q_{n+1}} \succeq I_{n+1}$. Since $I_n^{q_{n+1}} \supset I_{n+1}$ (so $\nu(I_{n+1}) \leq \nu(I_n^{q_{n+1}})$), this is a consequence of the fact that these two intervals have comparable lengths (see ~\cite[Prop. 6.1]{dFG:book}).

We now turn to the proof of (ii). From Lemma \ref{critical times dominate}, we get $\Delta_1^{k_1} \succeq I_n^{2 q_{n+1} - 1}$ and $I_{n+1} \succeq \Delta_2^{k_2}$. By applying either Lemma \ref{comparability of ratios 1} or Lemma \ref{critical times dominate}, depending on whether $I_n^{2 q_{n+1} - 1} \cap \crit{f} = \varnothing$ or not, we get $I_n^{2 q_{n+1} - 1} \succeq I_n^{2 q_{n+1}}$, from which it follows that $\Delta_1^{k_1} \succeq I_n^{2 q_{n+1}}$. Thus, it suffices to show that $I_n^{2 q_{n+1}} \succeq I_{n+1}$.

We first consider the case $a_{n+1} \geq 2$. For this end, observe from figure \ref{fig: Last elements of Cn} that
\begin{equation} \label{eq1 comp short long}
I_n^{2 q_{n+1}} = I_{n+1}^{q_{n+1}} \cup I_{n+1} \cup [I_n \setminus (I_{n+1}^{q_n} \cup I_{n+1}^{q_n+q_{n+1}})]
\end{equation}
with the unions disjoint {\it modulo} endpoints. Thus, $I_n^{2 q_{n+1}} \supset I_{n+1}$, which implies that
\begin{equation} \label{eq2 comp short long}
\rat(I_{n+1}) \leq \left(\frac{\absol{I_n^{2 q_{n+1}}}}{\absol{I_{n+1}}}\right)^s \rat(I_n^{2 q_{n+1}}) \, .
\end{equation}

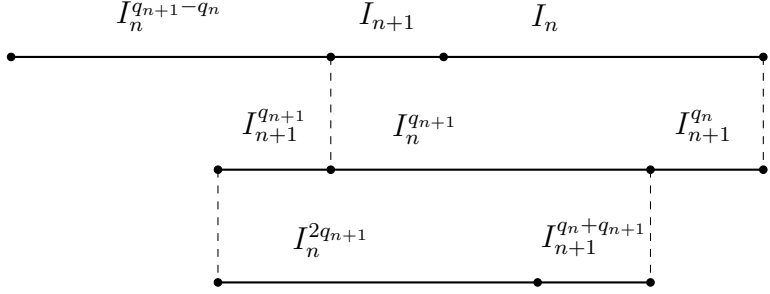
\begin{figure}[!ht]
    \centering
    \begin{tikzpicture}
    
    \begin{scope}[thick]
    
    \draw (-5, 2) -- (5, 2);
    \draw (-2.25, 0.5) -- (5, 0.5);
    \draw (-2.25, -1) -- (3.5, -1);
    
    \end{scope}
    
    \filldraw[color=black] (-5, 2) circle (0.05cm);
    \filldraw[color=black] (-0.75, 2) circle (0.05cm);
    \filldraw[color=black] (0.75, 2) circle (0.05cm);
    \filldraw[color=black] (5, 2) circle (0.05cm);
    \filldraw[color=black] (-2.25, 0.5) circle (0.05cm);
    \filldraw[color=black] (-0.75, 0.5) circle (0.05cm);
    \filldraw[color=black] (3.5, 0.5) circle (0.05cm);
    \filldraw[color=black] (5, 0.5) circle (0.05cm);
    \filldraw[color=black] (-2.25, -1) circle (0.05cm);
    \filldraw[color=black] (2, -1) circle (0.05cm);
    \filldraw[color=black] (3.5, -1) circle (0.05cm);
    
    \draw (-2.9, 2.2) node[anchor=south] {$I_n^{q_{n+1}-q_n}$};
    \draw (0, 2.2) node[anchor=south] {$I_{n+1}$};
    \draw (2.1, 2.2) node[anchor=south] {$I_n$};
    \draw (-1.5, 0.7) node[anchor=south] {$I_{n+1}^{q_{n+1}}$};
    \draw (0.5, 0.7) node[anchor=south] {$I_n^{q_{n+1}}$};
    \draw (4.2, 0.7) node[anchor=south] {$I_{n+1}^{q_n}$};
    \draw (-0.75, -0.8) node[anchor=south] {$I_n^{2 q_{n+1}}$};
    \draw (2.75, -0.8) node[anchor=south] {$I_{n+1}^{q_n+q_{n+1}}$};
    
    \begin{scope}[thin, dashed]
    
    \draw (-0.75, 2) -- (-0.75, 0.5);
    \draw (5, 2) -- (5, 0.5);
    \draw (-2.25, 0.5) -- (-2.25, -1);
    \draw (3.5, 0.5) -- (3.5, -1);
    
    \end{scope}
    
	\end{tikzpicture}
	\caption{Relative positions of the intervals $I_n$, $I_{n+1}$, $I_n^{q_{n+1}-q_n}$, $I_n^{q_{n+1}}$, $I_{n+1}^{q_n}$, $I_{n+1}^{q_{n+1}}$, $I_n^{2 q_{n+1}}$, $I_{n+1}^{q_n+q_{n+1}}$ when $a_{n+1} \geq 2$.}
	\label{fig: Last elements of Cn}
\end{figure}

Now, we use the Real Bounds to bound $\frac{\absol{I_n^{2 q_{n+1}}}}{\absol{I_{n+1}}}$. From \eqref{eq1 comp short long}, we have:
\begin{equation} \label{eq3 comp short long}
\begin{split}
\frac{\absol{I_n^{2 q_{n+1}}}}{\absol{I_{n+1}}} &= \frac{\absol{I_{n+1}^{q_{n+1}}}}{\absol{I_{n+1}}} + \frac{\absol{I_{n+1}}}{\absol{I_{n+1}}} + \frac{\absol{I_n}}{\absol{I_{n+1}}} - \frac{\absol{I_{n+1}^{q_n}}}{\absol{I_{n+1}}} - \frac{\absol{I_{n+1}^{q_n+q_{n+1}}}}{\absol{I_{n+1}}} \\
&< \frac{\absol{I_{n+1}^{q_{n+1}}}}{\absol{I_{n+1}}} + 1 + \frac{\absol{I_n}}{\absol{I_{n+1}}} \\
&\leq C + 1 + C = 1 + 2 C < 3 C
\end{split}
\end{equation}
since $I_{n+1}^{q_{n+1}}, I_{n+1}$ are adjacent atoms of $\mathcal{P}_{n+1}$, $I_n, I_{n+1}$ are adjacent atoms of $\mathcal{P}_n$ and $C > 1$.

Plugging \eqref{eq3 comp short long} into \eqref{eq2 comp short long}, we get
\begin{equation} \label{eq4 comp short long}
\rat(I_{n+1}) \leq (3 C)^s \rat(I_n^{2 q_{n+1}})
\end{equation}
which proves that $I_n^{2 q_{n+1}} \succeq I_{n+1}$.

\begin{figure}[!ht]
    \centering
    \begin{tikzpicture}
    
    \begin{scope}[thick]
    
    \draw (-5, 2) -- (5, 2);
    \draw (-2.25, 0.5) -- (5, 0.5);
    \draw (-2.25, -1) -- (3.5, -1);
    \draw (-2.25, -2.5) -- (1.5, -2.5);
    
    \end{scope}
    
     \begin{scope}[thin, dashed]
    
    \draw (-0.75, 2) -- (-0.75, -2.5);
    \draw (0.75, 2) -- (0.75, -2.5);
    \draw (5, 2) -- (5, 0.5);
    \draw (-2.25, 0.5) -- (-2.25, -1);
    \draw (3.5, 0.5) -- (3.5, -1);
    \draw (-2.25, -1) -- (-2.25, -2.5);
    \draw (0.25, -1) -- (0.25, -2.5);
    
    \end{scope}
    
    \filldraw[color=black] (-5, 2) circle (0.05cm);
    \filldraw[color=black] (-0.75, 2) circle (0.05cm);
    \filldraw[color=black] (0.75, 2) circle (0.05cm);
    \filldraw[color=black] (5, 2) circle (0.05cm);
    \filldraw[color=black] (-2.25, 0.5) circle (0.05cm);
    \filldraw[color=black] (-0.75, 0.5) circle (0.05cm);
    \filldraw[color=black] (3.5, 0.5) circle (0.05cm);
    \filldraw[color=black] (5, 0.5) circle (0.05cm);
    \filldraw[color=black] (-2.25, -1) circle (0.05cm);
    \filldraw[color=black] (0.25, -1) circle (0.05cm);
    \filldraw[color=black] (3.5, -1) circle (0.05cm);
    \filldraw[color=black] (-2.25, -2.5) circle (0.05cm);
    \filldraw[color=black] (-0.75, -2.5) circle (0.05cm);
    \filldraw[color=black] (0.25, -2.5) circle (0.05cm);
    \filldraw[color=black] (1.5, -2.5) circle (0.05cm);

    \draw (-2.9, 2.2) node[anchor=south] {$I_n^{q_{n+1}-q_n}$};
    \draw (0, 2.2) node[anchor=south] {$I_{n+1}$};
    \draw (2.1, 2.2) node[anchor=south] {$I_n$};
    \draw (-1.5, 0.7) node[anchor=south] {$I_{n+1}^{q_{n+1}}$};
    \draw (1.8, 0.7) node[anchor=south] {$I_n^{q_{n+1}}$};
    \draw (4.2, 0.7) node[anchor=south] {$I_{n+1}^{q_n}$};
    \draw (-1, -0.8) node[anchor=south] {$I_n^{2 q_{n+1}}$};
    \draw (2.75, -0.8) node[anchor=south] {$I_{n+1}^{q_n+q_{n+1}}$};
    \draw (-1.5, -2.3) node[anchor=south] {$I_{n+1}^{q_{n+1}}$};
    \draw (-0.2, -2.3) node[anchor=south] {$I_{n+2}^{q_{n+1}}$};
    \draw (1.2, -2.5) node[anchor=north] {$I_{n+2}^{q_{n+1} + q_{n+2}}$};
    
	\end{tikzpicture}
	\caption{Relative positions of the intervals $I_n$, $I_{n+1}$, $I_n^{q_{n+1}-q_n}$, $I_n^{q_{n+1}}$, $I_{n+1}^{q_n}$, $I_{n+1}^{q_{n+1}}$, $I_n^{2 q_{n+1}}$, $I_{n+1}^{q_n+q_{n+1}}$, $I_{n+2}$, $I_{n+2}^{q_{n+1}}$, $I_{n+2}^{q_{n+1} + q_{n+2}}$ when $a_{n+1} = a_{n+2} = 1$. By applying the real bounds and Lemma \ref{comparability of a bunch of images of atoms}, one can see all these intervals have comparable lengths.}
	\label{fig: Last elements of Cn 2}
\end{figure}
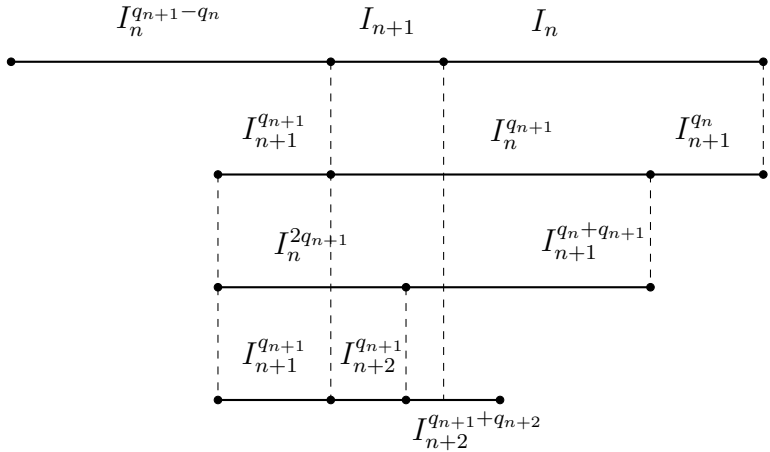

Finally, we address the case $a_{n+1} = a_{n+2} = 1$. Observe from figure \ref{fig: Last elements of Cn 2} that $I_n^{2 q_{n+1}} \supset I_{n+2}^{q_{n+1}}$ and $I_{n+1} \subset I_{n+2}^{q_{n+1}} \cup I_{n+2}^{q_{n+1} + q_{n+2}}$; from Lemma \ref{critical times dominate}, $I_{n+2}^{q_{n+1}} \succeq I_{n+2}^{q_{n+1} + q_{n+2}}$. By applying the Real Bounds and Lemma \ref{comparability of a bunch of images of atoms}, we obtain 
\[
	I_n^{2 q_{n+1}} \succeq I_{n+2}^{q_{n+1}} \succeq I_{n+2}^{q_{n+1}} \cup I_{n+2}^{q_{n+1} + q_{n+2}} \succeq I_{n+1}
\]
which finishes the proof.
\end{proof}

Though we expect Theorem \ref{general comparability of ratios} shall prove more useful in future works, for our purposes we will require the following stronger result, which clearly implies Theorem \ref{general comparability of ratios}.

\begin{theorem} \label{most general comparability of ratios} There exists a constant $B_2 = B_2(f, s) > 1$ with the following property. For any $n \geq n_0$ and $\Delta_1, \Delta_2 \in \mathcal{P}_n$, we have:
\begin{enumerate}
    \item[(a)] If $\Delta_1, \Delta_2$ are both long atoms or both short atoms of $\mathcal{P}_n$, then
    \begin{equation} \label{eq most comp ratios same type}
    B_2^{-1} \, \rat(\Delta_2) \leq \rat(\Delta_1) \leq B_2 \, \rat(\Delta_2) \, .
    \end{equation}
    \item[(b)] If $\Delta_1$ is a short atom and $\Delta_2$ is a long atom of $\mathcal{P}_n$, then
    \begin{equation} \label{eq most comp ratios diff type}
        \rat(\Delta_1) \leq B_2 \, \rat(\Delta_2) \, .
    \end{equation}
\end{enumerate}
Furthermore, if $a_{n+1} \geq 2$ or $a_{n+1} = a_{n+2} = 1$, then $\Delta_1, \Delta_2$ can be respectively replaced in the above inequalities by images $\Delta_1^{k_1}, \Delta_2^{k_2}$, $0 \leq k_1, k_2 \leq q_{n+1} + 1$.
\end{theorem}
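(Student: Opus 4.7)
My plan is to build on the lemmas established earlier in this section: the two-sided comparability Lemma~\ref{comparability of ratios 1}, the one-sided domination Lemma~\ref{critical times dominate}, the long-dominates-short principle of Lemma~\ref{comparability of ratios short-long}, and the refined distortion estimate on $\Delta^\ast$ through $\chdois{\Delta}$ given by Lemma~\ref{Koebe for measure 2}.

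\smallskip

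\textbf{Part (b).} If $\Delta_1$ is short and $\Delta_2$ is long in $\mathcal P_n$, Lemma~\ref{comparability of ratios short-long} yields $\Delta_2\succeq\Delta_1$, which is exactly \eqref{eq most comp ratios diff type} (taking $B_2\geq K(f,s)$). Under the combinatorial hypothesis $a_{n+1}\geq 2$ or $a_{n+1}=a_{n+2}=1$, the ``Furthermore'' clause of Lemma~\ref{comparability of ratios short-long} upgrades this to $\Delta_2^{k_2}\succeq\Delta_1^{k_1}$ for $0\leq k_1,k_2\leq q_{n+1}$. The additional iterate $k=q_{n+1}+1$ is then obtained by one extra step, bounded (depending on whether the step crosses $\crit f$) by either Lemma~\ref{comparability of ratios 1} or Lemma~\ref{critical times dominate}(a), at the cost of only a uniform multiplicative constant.

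\smallskip

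\textbf{Part (a).} Let $\Delta_1=I_n^{\ell_1}$ and $\Delta_2=I_n^{\ell_2}$ with $\ell_1\leq\ell_2$, both long; the short-atom case is identical upon replacing $I_n$ by $I_{n+1}$. Lemma~\ref{critical times dominate}(b) gives $\Delta_1\succeq\Delta_2$ immediately, so the core difficulty is the reverse $\rat(\Delta_1)\leq K\,\rat(\Delta_2)$. My strategy is to iterate $\Delta_2$ forward past $q_{n+1}$, landing on $I_n^{q_{n+1}+\ell_1}=f^{\ell_1}(I_n^{q_{n+1}})$; since $I_n^{q_{n+1}}\subset I_n$ (established inside the proof of Lemma~\ref{comparability of ratios short-long}), applying $f^{\ell_1}$ places $I_n^{q_{n+1}+\ell_1}$ inside $\Delta_1=I_n^{\ell_1}$. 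To control $\rat$ along the chain $\Delta_2\to\cdots\to I_n^{q_{n+1}+\ell_1}$ \emph{two-sidedly}, I would invoke Lemma~\ref{Koebe for measure 2} on the enlarged interval $\Delta_2^\ast$: the intersection-multiplicity bound (Lemma~\ref{bounding mult intersec}(c)) gives two-sided Koebe distortion on $\Delta_2^\ast$ at all non-critical times, while Lemma~\ref{comparability of ratios short-long} applied to $\Delta_2^\ast$ (thought of as a union of an atom with its two neighbours) handles the handful of critical crossings. Combined with the real bounds comparing $|\Delta_1|$ with $|I_n^{q_{n+1}+\ell_1}|$ and the inclusion $I_n^{q_{n+1}+\ell_1}\subset\Delta_1$, this yields a chain of $\rat$-comparabilities of length uniformly bounded in $n$, producing the required $\rat(\Delta_1)\leq K'\,\rat(\Delta_2)$.

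\smallskip

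\textbf{Main obstacle.} The hardest step is this reverse direction. Lemma~\ref{critical times dominate}(a) is fundamentally one-sided --- at each critical time $\rat$ may drop by a factor up to $(3d)^s$, with no \emph{a~priori} lower bound to compensate. The entire purpose of enlarging $\chum\Delta$ to $\chdois\Delta$, and of introducing Lemma~\ref{Koebe for measure 2} in addition to Lemma~\ref{Koebe for measure}, is to replace the one-sided distortion estimate by a two-sided one on $\Delta^\ast$, which is precisely what enables the reverse $\rat$-domination chain to close. The hypothesis $a_{n+1}\geq 2$ or $a_{n+1}=a_{n+2}=1$ is the same combinatorial input that powered the proof of Lemma~\ref{comparability of ratios short-long}: it ensures that the return picture of Figures~\ref{fig: Last elements of Cn} and~\ref{fig: Last elements of Cn 2} is available, so that the iterates up to $q_{n+1}+1$ retain the comparability of adjacent atoms needed for $\chdois{\Delta}$ and $\Delta^\ast$ to do their work.
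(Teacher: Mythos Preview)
Your Part~(b) is fine and matches the paper exactly.

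For Part~(a) you have overcomplicated the argument and introduced an error. You fix an ordering $\ell_1\le\ell_2$, obtain $\Delta_1\succeq\Delta_2$ from Lemma~\ref{critical times dominate}(b), and then treat the reverse inequality $\Delta_2\succeq\Delta_1$ as ``the core difficulty'' requiring two-sided distortion control via $\chdois{\Delta}$ and Lemma~\ref{Koebe for measure 2}. This is not how the paper proceeds, and your route has a gap.

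The paper's argument is symmetric and uses only the one-sided tools already in hand. For \emph{arbitrary} long atoms $\Delta',\Delta''\in\mathcal P_n$ (no ordering assumed) one chains
\[
\Delta'\;\succeq\;I_n^{q_{n+1}}\;\succeq\;I_n\;\succeq\;\Delta'',
\]
where the first and last relations come from Lemma~\ref{critical times dominate}(b), and the middle step $I_n^{q_{n+1}}\succeq I_n$ follows from the inclusion $I_n\subset I_n^{q_{n+1}}\cup I_{n+1}^{q_n}$ together with $I_n^{q_{n+1}}\succeq I_{n+1}^{q_n}$ (Lemma~\ref{comparability of ratios short-long}) and comparability of lengths. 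Applying this with $(\Delta',\Delta'')=(\Delta_1,\Delta_2)$ and then with $(\Delta',\Delta'')=(\Delta_2,\Delta_1)$ gives both directions at once. The ``Furthermore'' clause is the same chain with $I_n^{q_{n+1}}$ replaced by $I_n^{2q_{n+1}}$ (using $I_n\subset I_n^{2q_{n+1}}\cup I_{n+1}^{q_n+q_{n+1}}\cup I_{n+1}^{q_n}$); the short-atom case follows because short atoms at level $n$ are long at level $n{+}1$.

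Your proposed route has a concrete error: you assert $I_n^{q_{n+1}}\subset I_n$, attributing this to the proof of Lemma~\ref{comparability of ratios short-long}. That inclusion is false (cf.\ Figure~\ref{fig: Last elements of Cn}: $I_n^{q_{n+1}}$ contains $I_{n+1}$ and protrudes to the left of $c$); what holds is the reverse-flavoured $I_n\subset I_n^{q_{n+1}}\cup I_{n+1}^{q_n}$. Moreover, Lemma~\ref{Koebe for measure 2} does \emph{not} give two-sided control across critical times---it is still only valid on stretches where $f^j(\chdois\Delta)$ avoids $\crit f$---so invoking it here does not repair the one-sidedness of Lemma~\ref{critical times dominate}(a). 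The paper in fact notes explicitly (just after Lemma~\ref{Koebe for measure 2}) that $\chdois\Delta$ and Lemma~\ref{Koebe for measure 2} are not used in \S\ref{bounds}; they are reserved for the $\Gamma$-ratio estimates in \S\ref{ergodic}.
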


\begin{proof} Observe that (b) is precisely the content of Lemma \ref{comparability of ratios short-long}. Moreover, since all short atoms of $\mathcal{P}_n$ become long atoms in $\mathcal{P}_{n+1}$, it suffices to prove (a) for long $\Delta_1, \Delta_2$.

Assume that $\Delta_1, \Delta_2$ are both long atoms of $\mathcal{P}_n$. We split the proof in two parts: (i) that $\Delta_1 \succeq \Delta_2$; and (ii) that $\Delta_1^{k_1} \succeq \Delta_2^{k_2}$ if $a_{n+1} \geq 2$ or $a_{n+1} = a_{n+2} = 1$. 

We first prove (i). It suffices to show that $\Delta_1$ $\rat$-dominates $\Delta_2$, since then the $\rat$-comparability of the two follows by simply interchanging $\Delta_1$ and $\Delta_2$. Once more, from Lemma \ref{critical times dominate}, $\Delta_1 \succeq I_n^{q_{n+1}}$ and $I_n \succeq \Delta_2$. Since $I_n \subset I_n^{q_{n+1}} \cup I_{n+1}^{q_n}$, $I_n^{q_{n+1}} \succeq I_{n+1}^{q_n}$ (by Lemma \ref{comparability of ratios short-long}) and these intervals have pairwise comparable lengths, we conclude that $I_n^{q_{n+1}} \succeq I_n$. This finishes the proof of (i).

The proof of (ii) is essentially the same: we have $\Delta_1^{k_1} \succeq I_n^{2 q_{n+1}}$ and $I_n \succeq \Delta_2^{k_2}$, so we need only show that $I_n^{2 q_{n+1}} \succeq I_n$. But $I_n \subset I_n^{2 q_{n+1}} \cup I_{n+1}^{q_n + q_{n+1}} \cup I_{n+1}^{q_n}$ and (from Lemma \ref{comparability of ratios short-long})
\begin{equation} \label{eq1 most comp}
I_n^{2 q_{n+1}} \succeq I_{n+1}^{q_n} \succeq I_{n+1}^{q_n + q_{n+1}}
\end{equation}
so, since these intervals have pairwise comparable lengths, we conclude that $I_n^{2 q_{n+1}} \succeq I_n$. This finishes the proof of (ii).
\end{proof}

\section{Ergodicity and uniqueness} \label{ergodic}

In this section, we prove that automorphic measures for multicritical circle maps with irrational rotation number are ergodic (Theorem \ref{ergodicity} below). As a consequence, we will obtain the uniqueness part of Theorem \ref{exist-unique} (we would like to remark that the non-flatness condition on each critical point of $f$ is crucial in order to have uniqueness, see \S \ref{ProofThmC} below). In particular, Lebesgue is the unique $f$-automorphic measure of exponent $1$ (Corollary \ref{lebesgue is unique}).

We further show that this uniqueness remains true (up to a scalar multiple) in the context of continuous linear functionals on $C^0(S^1)$ (Corollary \ref{signed uniqueness}). As we will see in \S \ref{distr}, Corollary \ref{signed uniqueness}, applied to Lebesgue measure ($s = 1$), is the core step towards proving Theorem \ref{no invar distr}.

\medskip

\subsection{The $\Gamma$ ratio} \label{gamma}

We first introduce a bit of notation. Fix some $s > 0$ and $\nu \in \mathcal{A}_s$. For an interval $I \subset S^1$ and a Borel $f$-invariant set $B \subset S^1$, we shall denote by $\Gamma(I; B)$ the ratio
\begin{equation} \label{eq def gamma}
\Gamma(I; B) \eqdef \frac{\nu(I \cap B)}{\nu(I)}.
\end{equation}

Observe that $\Gamma(I; B)$ can be expressed as the quotient of two $\rat$-ratios with respect to $I$ and different invariant sets: in the numerator, we take $B$ as the $f$-invariant set, and in the denominator, we take $S^1$ as the invariant set.

By direct analogy with Lemma \ref{comparability of ratios 1}, we thus obtain the following result:

\begin{lemma} \label{comparability of gammas 1} Let $n \geq 0$ and $\Delta \in \mathcal{P}_n$. Then, for any interval $I \subset \Delta^\ast$ and $0 \leq j < k \leq q_{n+1}$ such that the intervals $f^j(\chdois{\Delta}), \dots, f^{k-1}(\chdois{\Delta})$ do not contain any critical point of $f$, the following holds for all Borel $f$-invariant sets $B \subset S^1$:
\begin{equation} \label{eq comp gammas}
B_1^{-2s} \, \Gamma(I^j; B) \leq \Gamma(I^k; B) \leq B_1^{2s} \, \Gamma(I^j; B).
\end{equation}
\end{lemma}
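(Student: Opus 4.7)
The plan is to mimic the proof of Lemma \ref{comparability of ratios 1}, but exploit the fact that $\Gamma(I;B)$ can be written as a ratio of two $\omega$-type quantities associated to different invariant sets, namely
\[
\Gamma(I;B) \;=\; \frac{\nu(I\cap B)/|I|^s}{\nu(I)/|I|^s}\,,
\]
with the denominator corresponding to the trivial invariant set $B=S^1$. The strategy is to bound numerator and denominator separately using Koebe distortion, and then divide.

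First, since $I\subset \Delta^\ast$, the interval $I^j$ sits inside $f^j(\Delta^\ast)$, and the hypothesis that $f^j(\chdois{\Delta}), \dots, f^{k-1}(\chdois{\Delta})$ avoid $\crit f$ allows me to invoke Lemma \ref{Koebe for measure 2}. The map $f^{k-j}\colon f^j(\Delta^\ast)\to f^k(\Delta^\ast)$ has distortion at most $B_1$, so by the Mean Value Theorem, for all $x\in I^j$,
\[
B_1^{-1}\,\frac{|I^k|}{|I^j|} \;\leq\; Df^{k-j}(x) \;\leq\; B_1\,\frac{|I^k|}{|I^j|}\,.
\]

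Second, since $f$ is a homeomorphism and $B$ is $f$-invariant, $f^{k-j}(I^j\cap B) = I^k\cap B$, so the automorphic property \eqref{AutIteration} gives
\[
\nu(I^k\cap B) \;=\; \int_{I^j\cap B}\,(Df^{k-j})^s\,d\nu \;\in\; \bigl[B_1^{-s}(|I^k|/|I^j|)^s\nu(I^j\cap B),\; B_1^s(|I^k|/|I^j|)^s\nu(I^j\cap B)\bigr]\,.
\]
Applying the same identity with $B=S^1$ gives the analogous two-sided bound for $\nu(I^k)$ in terms of $\nu(I^j)$, with the same factor $(|I^k|/|I^j|)^s$ and the same distortion constant $B_1^s$.

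Finally, taking the quotient $\Gamma(I^k;B) = \nu(I^k\cap B)/\nu(I^k)$, the Euclidean length ratios cancel, and the Koebe factors combine multiplicatively, producing $B_1^{\pm 2s}$ on each side of the resulting estimate. I do not expect any genuine obstacle here: the only thing to be careful about is the use of $\Delta^\ast$ (rather than $\Delta$) and $\chdois{\Delta}$ (rather than $\chum{\Delta}$), which forces the use of Lemma \ref{Koebe for measure 2} with constant $B_1$ in place of Lemma \ref{Koebe for measure} with $B_0$, and the doubling of the exponent (from $B_0^{s}$ to $B_1^{2s}$) coming from estimating numerator and denominator separately.
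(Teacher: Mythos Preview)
Your proposal is correct and follows precisely the approach the paper intends: the paper itself does not give a proof, but introduces Lemma~\ref{comparability of gammas 1} with the remark that $\Gamma(I;B)$ is a quotient of two $\rat$-ratios (with invariant sets $B$ and $S^1$) and states the result ``by direct analogy with Lemma~\ref{comparability of ratios 1}''. Your write-up makes that analogy explicit, correctly replacing Lemma~\ref{Koebe for measure} by Lemma~\ref{Koebe for measure 2} (hence $B_0$ by $B_1$, and $\Delta$ by $\Delta^\ast$), and the doubling of the exponent from $s$ to $2s$ arises exactly as you say, from bounding numerator and denominator separately.
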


\begin{definition} \label{critical time of type 2} Let $\Delta \in \mathcal{P}_n$, $0 \leq k < q_{n+1}$. We shall say that $k$ is a {\it critical time of type 2} for $\Delta$ if $f^k(\chdois{\Delta}) \cap \crit{f} \neq \varnothing$.
\end{definition}

Since $f$ has $N$ critical points $c_1, \dots, c_N$ and the collection $\listset{f^k(\chdois{\Delta})}_{k=0}^{q_{n+1}-1}$ has intersection multiplicity at most $8$ (Lemma \ref{bounding mult intersec}), it follows that, for any $n \geq 0$ and $\Delta \in \mathcal{P}_n$, there are at most $8N$ critical times of type 2 for $\Delta$.

\begin{remark} \label{one critical point from critical time 2} By a direct analogy with Remark \ref{one critical point from critical time}, there is some level $n_1 = n_1(f) \geq n_0(f) \in \N$, depending only on $f$, such that, for all $n \geq n_1$, $\Delta \in \mathcal{P}_n$ and $0 \leq k < q_{n+1}$ a critical time of type 2 for $\Delta$, we have that: (i) $f^k(\chdois{\Delta})$ contains a {\it single} critical point of $f$; and (ii) $f^k(\chdois{\Delta}) \subset U$, where $U$ is the interval about the critical point of $f$ in $f^k(\chdois{\Delta})$ from Proposition \ref{prop 5.4 dFG21}.
\end{remark}

With this terminology in mind, the following corollary is now an immediate consequence of Lemma \ref{comparability of gammas 1}:

\begin{coro} \label{comparability of gammas between critical times} Let $n \geq 0$, $\Delta \in \mathcal{P}_n$, and let $0 \leq k_1 < k_2 < \cdots < k_r < q_{n+1}$ be the critical times of type 2 for $\Delta$. Then, for any interval $I \subset \Delta$, the following holds for all Borel $f$-invariant sets $B \subset S^1$:
\begin{enumerate}
\item[(a)] For any $0 \leq j, \, \ell \leq k_1$, 
\begin{equation} \label{eq comp gammas between crits}
	B_1^{-2s} \, \Gamma(I^j; B) \leq \Gamma(I^\ell; B) \leq B_1^{2s} \, \Gamma(I^j; B) \, ;
\end{equation}
\item[(b)] \eqref{eq comp gammas between crits} also holds for any $k_i + 1 \leq j, \, \ell \leq k_{i+1}$, $1 \leq i < r$;
\item[(c)] \eqref{eq comp gammas between crits} also holds for $k_r + 1 \leq j, \, \ell \leq q_{n+1}$.
\end{enumerate}
\end{coro}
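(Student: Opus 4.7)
The plan is to derive this corollary as a direct consequence of Lemma \ref{comparability of gammas 1}. Recall that Lemma \ref{comparability of gammas 1} provides the two-sided bound $B_1^{-2s}\,\Gamma(I^j;B)\leq \Gamma(I^k;B)\leq B_1^{2s}\,\Gamma(I^j;B)$ for $I\subset \Delta^\ast$ and $0\leq j<k\leq q_{n+1}$, provided the intermediate intervals $f^j(\chdois{\Delta}),\ldots,f^{k-1}(\chdois{\Delta})$ contain no critical point of $f$. Since $\Delta\subset \Delta^\ast$, any interval $I\subset\Delta$ automatically satisfies the first hypothesis, so the whole content of the proof is the verification that the critical-point hypothesis holds in each of the three regimes (a), (b), (c).

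The key observation is that, by Definition \ref{critical time of type 2}, the statement ``$f^i(\chdois{\Delta})$ contains no critical point of $f$'' is precisely the statement that $i$ is \emph{not} a critical time of type 2 for $\Delta$. Hence the hypothesis of Lemma \ref{comparability of gammas 1} reduces to the condition that none of the indices $j,j+1,\ldots,k-1$ belongs to $\{k_1,\ldots,k_r\}$. In case (a), both $j$ and $\ell$ lie in $[0,k_1]$, so any intermediate index is strictly less than $k_1$ and thus avoids the list of critical times. In case (b), both $j$ and $\ell$ lie in $[k_i+1,k_{i+1}]$, so intermediate indices lie strictly between two consecutive critical times. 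Case (c) is analogous, with indices sitting strictly above $k_r$ but not exceeding $q_{n+1}$.

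The last point to dispatch is that the statement of the corollary is symmetric in $j$ and $\ell$, whereas Lemma \ref{comparability of gammas 1} is phrased for $j<k$. This is immediate: given two indices $j,\ell$ in the relevant range, apply the lemma to $(j',\ell')=(\min(j,\ell),\max(j,\ell))$ and read off both inequalities from the two-sided estimate. I expect no genuine obstacle in this argument; the only substantive work was already done in setting up the notions of $\chdois{\Delta}$, critical times of type 2, and in establishing Lemma \ref{comparability of gammas 1} itself. The corollary is thus essentially a bookkeeping step that repackages the lemma in a form suitable for iteration along the orbit of $\Delta$ up to time $q_{n+1}$, with the penalty for each critical time of type 2 deferred to a separate estimate (analogous to the role of Lemma \ref{critical times dominate} in the $\rat$-domination setting).
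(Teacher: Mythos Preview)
Your proposal is correct and follows exactly the paper's approach: the paper simply states that the corollary is ``an immediate consequence of Lemma \ref{comparability of gammas 1}'' without further detail, and your write-up is just the natural unpacking of that remark. Your handling of the symmetry in $j,\ell$ and the verification that intermediate indices avoid the set $\{k_1,\ldots,k_r\}$ in each regime is exactly what is needed.
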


As we did in Lemma \ref{critical times dominate}, we now turn to the problem of understanding the behavior of $\Gamma$ close to a critical point of $f$. 

\begin{lemma} \label{bounding gamma of critical iterates} There exists a constant $B_3 = B_3(f, s) > 1$ with the following property. Let $n \geq n_1$ and $\Delta \in \mathcal{P}_n$. Assume that $n$ is such that either $a_{n+1} \geq 2$ or $a_{n+1} = a_{n+2} = 1$. Then the following holds for all Borel $f$-invariant sets $B \subset S^1$: if $0 \leq j < q_{n+1}$ is a critical time of type 2 for $\Delta$,
\begin{equation} \label{eq bounding critical iterate}
\Gamma(f^{j+1}(\Delta^\ast); B) \leq B_3 \; \Gamma(f^j(\Delta^\ast); B).
\end{equation}
\end{lemma}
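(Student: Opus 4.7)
The plan is to leverage the $f$-automorphic property of $\nu$ combined with the $f$-invariance of $B$ to transform $\Gamma(f^{j+1}(\Delta^*); B)$ into an integral form, and then to exploit the local non-flat behaviour of $f$ near the critical point inside $f^j(\chdois{\Delta})$. Concretely, since $f$ is a homeomorphism and $B$ is $f$-invariant, $f(E \cap B) = f(E) \cap B$ for any Borel $E$; combined with equation \eqref{AutIteration} (applied with $n=1$), this gives
\[
\Gamma\bigl(f^{j+1}(\Delta^*); B\bigr) = \frac{\int_{f^j(\Delta^*) \cap B} (Df)^s \, d\nu}{\int_{f^j(\Delta^*)} (Df)^s \, d\nu}.
\]
By Remark \ref{one critical point from critical time 2}, the interval $f^j(\chdois{\Delta})$ contains a single critical point $c^*$ and lies inside the neighbourhood $U_{c^*}$ from Proposition \ref{prop 5.4 dFG21}. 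Writing $\Delta^* = L \cup \Delta \cup R$ as a union of three $\mathcal{P}_n$-atoms, at most one of $f^j(L), f^j(\Delta), f^j(R)$ contains $c^*$.

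The crux is to choose $P^* \in \{L, \Delta, R\}$ so that $E^* := f^j(P^*)$ is a long atom well separated from $c^*$, in the sense that $\textrm{dist}(c^*, E^*) \gtrsim |E^*|$. Such a choice exists by the real bounds (Theorem \ref{real bounds}) together with the hypothesis on $a_{n+1}$, which guarantees enough long atoms among three consecutive atoms of $\mathcal{P}_n$. With this choice, the non-flat local form of $f$ at $c^*$ (Definition \ref{nonflat}) forces $Df(x) \asymp |E^*|^{d-1}$ uniformly for $x \in E^*$, where $d$ is the degree of $c^*$; hence $(Df)^s$ has oscillation on $E^*$ bounded by a constant depending only on $f$ and $s$, yielding
\[
\frac{\int_{E^* \cap B} (Df)^s \, d\nu}{\int_{E^*} (Df)^s \, d\nu} \lesssim \frac{\nu(E^* \cap B)}{\nu(E^*)} = \Gamma(E^*; B).
\]

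To conclude, I would apply Theorem \ref{most general comparability of ratios} both with invariant set $B$ and with $S^1$, using its ``furthermore'' clause (permitted precisely by the hypothesis on $a_{n+1}$) to extend comparability up to iterate $j+1 \leq q_{n+1} + 1$. Since $P^*$ is long, part (b) of that theorem dominates the $\rat$-ratio of any short atom in $\{L, \Delta, R\}$ by that of $P^*$; combined with the real bounds on atom lengths this yields $\nu(f^j(P)) \lesssim \nu(E^*)$ and $\nu(f^j(P) \cap B) \lesssim \nu(E^* \cap B)$ for every $P \in \{L, \Delta, R\}$, and analogously at iterate $j+1$. Summing over $P$ then gives $\nu(f^j(\Delta^*)) \asymp \nu(E^*)$ and $\nu(f^j(\Delta^*) \cap B) \asymp \nu(E^* \cap B)$, with the same comparabilities at iterate $j+1$. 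Chaining these estimates yields
\[
\Gamma\bigl(f^{j+1}(\Delta^*); B\bigr) \asymp \Gamma\bigl(f(E^*); B\bigr) \lesssim \Gamma(E^*; B) \asymp \Gamma\bigl(f^j(\Delta^*); B\bigr),
\]
with all constants depending only on $f$ and $s$.

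I expect the main obstacle to be the combinatorial verification that one can always find $P^*$ which is simultaneously long and well separated from $c^*$. The hypothesis $a_{n+1} \geq 2$ forces short atoms of $\mathcal{P}_n$ to be separated by at least two long atoms, making the choice straightforward in that regime; the parallel case $a_{n+1} = a_{n+2} = 1$ is more delicate and may require passing to the refinement $\mathcal{P}_{n+1}$. A further subtlety is that Theorem \ref{most general comparability of ratios} provides only one-sided short-by-long domination, which is why insisting that $P^*$ be long is essential to obtain the two-sided $\nu$-mass estimates needed for the chain of comparabilities.
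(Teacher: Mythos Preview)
Your overall architecture is sound and matches the paper's: rewrite $\Gamma(f^{j+1}(\Delta^\ast);B)$ as a ratio of $(Df)^s$-weighted $\nu$-integrals over $f^j(\Delta^\ast)$, locate a ``safe'' long atom on which $Df$ has bounded oscillation, and then use Theorem~\ref{most general comparability of ratios} (with invariant sets $B$ and $S^1$) to transfer the $\Gamma$-estimate back to $\Delta^\ast$. The use of the ``furthermore'' clause of that theorem, and the reason you need the safe atom to be long, are exactly right.

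The gap is the one you flag as your ``main obstacle,'' and it is not merely delicate---it is genuinely unresolvable with $P^\ast\in\{L,\Delta,R\}$. Consider the configuration in which $L$ is short and $\Delta,R$ are long (this occurs already when $a_{n+1}\ge 2$), and $c^\ast$ lies at (or arbitrarily close to) the common endpoint of $f^j(\Delta)$ and $f^j(R)$. Then neither $f^j(\Delta)$ nor $f^j(R)$ satisfies $\operatorname{dist}(c^\ast,E^\ast)\gtrsim |E^\ast|$, so your non-flat estimate $Df\asymp |E^\ast|^{d-1}$ fails on both; and the only well-separated choice $L$ is short, which (as you correctly observe) blocks the two-sided $\nu$-mass comparison via Theorem~\ref{most general comparability of ratios}. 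Passing to $\mathcal{P}_{n+1}$ does not help here, since the obstruction already arises in the regime $a_{n+1}\ge 2$.

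The paper's resolution is precisely why $\chdois{\Delta}$ and ``critical time of type~2'' are introduced: one searches for the safe atom \emph{outside} $\Delta^\ast$, among $\{L,(L)_2\}$ or $\{R,(R)_2\}$. The two wings $(L)_3\cup(L)_2\cup\chum{L}$ and $\chum{R}\cup(R)_2\cup(R)_3$ are disjoint, so the (single) critical point misses one of them, say the left one. Since no two adjacent atoms are short, one of $L,(L)_2$ is long; call it $P$. Then $f^j(\chum{P})$ is critical-free, so Lemma~\ref{Koebe for measure} (Koebe) gives bounded distortion of $f$ on $f^j(P)$ directly---no non-flat local-form computation is needed. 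From there the chaining via Theorem~\ref{most general comparability of ratios} proceeds exactly as you outline. In your bad scenario above, the paper picks $P=(L)_2$, which is long and whose $\chum{}$-neighbourhood is safely away from $c^\ast$.
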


\begin{remark} Note the hypotheses on the combinatorics of $f$ at level $n$: $a_{n+1} \geq 2$ or $a_{n+1} = a_{n+2} = 1$. These are necessary to allow for use of the sharpened version of Theorem \ref{most general comparability of ratios}.
\end{remark}

\begin{proof} We begin with the following observation (see Lemma \ref{comparability of a bunch of images of atoms}): for any one of the atoms $I$ of $\mathcal{P}_n$ that compose $\chdois{\Delta}$ and $0 \leq k \leq q_{n+1}$, we have $I^k \asymp f^k(\Delta^\ast)$. Further note that
\[
	((L)_3 \cup (L)_2 \cup \chum{L}) \cap (\chum{R} \cup (R)_2 \cup (R)_3) = \varnothing
\]
so (from Remark \ref{one critical point from critical time 2}) it cannot be that both $f^j((L)_3 \cup (L)_2 \cup \chum{L})$ and $f^j(\chum{R} \cup (R)_2 \cup (R)_3)$ contain a critical point of $f$. Since the other case is identical, we assume, without loss of generality, that
\[
	f^j((L)_3 \cup (L)_2 \cup \chum{L}) \cap \crit{f} = \varnothing \, .
\]

Now, since no two short atoms of $\mathcal{P}_n$ are adjacent, one of $L, (L)_2$ is a long atom; once more, since the other case is nearly identical, we assume, without loss of generality, that $(L)_2$ is a long atom of $\mathcal{P}_n$. Similarly, one of $\Delta, R$ is a long atom of $\mathcal{P}_n$, so we may assume, without loss of generality, that $R$ is also a long atom of $\mathcal{P}_n$.

It thus follows as a straightforward consequence of Theorem \ref{most general comparability of ratios} that $(L)_2^j \ratcomp f^j(\Delta^\ast)$ and $(L)_2^{j+1} \ratcomp f^{j+1}(\Delta^\ast)$. That is, there exists a constant $K_0 = K_0(f, s)$ such that
\begin{equation} \label{eq1 bounding critical iterate}
\begin{gathered}
K_0^{-1} \, \frac{\nu((L)_2^j \cap B)}{\absol{(L)_2^j}^s} \leq \frac{\nu(f^j(\Delta^\ast) \cap B)}{\absol{f^j(\Delta^\ast)}^s} \leq K_0 \, \frac{\nu((L)_2^j \cap B)}{\absol{(L)_2^j}^s}, \\
K_0^{-1} \, \frac{\nu((L)_2^j)}{\absol{(L)_2^j}^s} \leq \frac{\nu(f^j(\Delta^\ast))}{\absol{f^j(\Delta^\ast)}^s} \leq K_0 \, \frac{\nu((L)_2^j)}{\absol{(L)_2^j}^s}, \\
K_0^{-1} \, \frac{\nu((L)_2^{j+1} \cap B)}{\absol{(L)_2^{j+1}}^s} \leq \frac{\nu(f^{j+1}(\Delta^\ast) \cap B)}{\absol{f^{j+1}(\Delta^\ast)}^s} \leq K_0 \, \frac{\nu((L)_2^{j+1} \cap B)}{\absol{(L)_2^{j+1}}^s}, \\
K_0^{-1} \, \frac{\nu((L)_2^{j+1})}{\absol{(L)_2^{j+1}}^s} \leq \frac{\nu(f^{j+1}(\Delta^\ast))}{\absol{f^{j+1}(\Delta^\ast)}^s} \leq K_0 \, \frac{\nu((L)_2^{j+1})}{\absol{(L)_2^{j+1}}^s}.
\end{gathered}
\end{equation}
Therefore,
\begin{equation} \label{eq2 bounding critical iterate}
\begin{gathered}
K_0^{-2} \, \Gamma((L)_2^j; B) \leq \Gamma(f^j(\Delta^\ast); B) \leq K_0^2 \, \Gamma((L)_2^j; B), \\
K_0^{-2} \, \Gamma((L)_2^{j+1}; B) \leq \Gamma(f^{j+1}(\Delta^\ast); B) \leq K_0^2 \, \Gamma((L)_2^{j+1}; B).
\end{gathered}
\end{equation}

Now, since $f^j(\chum{(L)_2})$ contains no critical point of $f$, from Lemma \ref{Koebe for measure} and the mean value theorem,
\begin{equation} \label{eq3 bounding critical iterate}
B_0^{-1} \, \frac{\absol{(L)_2^{j+1}}}{\absol{(L)_2^j}} \leq D f(y) \leq B_0 \, \frac{\absol{(L)_2^{j+1}}}{\absol{(L)_2^j}} \quad\quad \ttt{for all } y \in (L)_2^j \, .
\end{equation}
Thus,
\begin{equation} \label{eq4 bounding critical iterate}
\Gamma((L)_2^{j+1}; B) = \frac{\int_{(L)_2^j \cap B} \, (D f)^s \, d\nu}{\int_{(L)_2^j} \, (D f)^s \, d\nu} \leq \frac{B_0^s \, \left( \frac{\absol{(L)_2^{j+1}}}{\absol{(L)_2^j}} \right)^s \, \int_{(L)_2^j \cap B} \, d\nu}{B_0^{-s} \, \left( \frac{\absol{(L)_2^{j+1}}}{\absol{(L)_2^j}} \right)^s \, \int_{(L)_2^j} \, d\nu} = B_0^{2s} \, \Gamma((L)_2^j; B).
\end{equation}
Combining equations \eqref{eq2 bounding critical iterate} and \eqref{eq4 bounding critical iterate}, we get:
\begin{equation} \label{eq5 bounding critical iterate}
\Gamma(f^{j+1}(\Delta^\ast); B) \leq K_0^2 \, \Gamma((L)_2^{j+1}; B) \leq K_0^2 \, B_0^{2s} \, \Gamma((L)_2^j; B) \leq B_0^{2s} \, K_0^4 \, \Gamma(f^j(\Delta^\ast); B)
\end{equation}
which is \eqref{eq bounding critical iterate}, with $B_3 \eqdef B_0^{2s} \, K_0^4$.
\end{proof}

Combining Corollary \ref{comparability of gammas between critical times} and Lemma \ref{bounding gamma of critical iterates}, we get the following.

\begin{coro} \label{gammas decrease}
There exists a constant $B_4 = B_4(f, s) > 1$ with the following property. Let $n \geq n_1$ and $\Delta \in \mathcal{P}_n$. Assume that $n$ is such that $a_{n+1} \geq 2$ or $a_{n+1} = a_{n+2} = 1$. Then, for any Borel $f$-invariant set $B \subset S^1$ and $0 \leq \ell_1 < \ell_2 \leq q_{n+1}$,
\begin{equation} \label{eq gammas decrease}
\Gamma(f^{\ell_2}(\Delta^\ast); B) \leq B_4 \; \Gamma(f^{\ell_1}(\Delta^\ast); B).
\end{equation}
\end{coro}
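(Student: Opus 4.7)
The plan is to chain together the bounds from Corollary \ref{comparability of gammas between critical times} and Lemma \ref{bounding gamma of critical iterates} along the orbit segment from time $\ell_1$ to time $\ell_2$. First, I list the type-$2$ critical times of $\Delta$ that lie in $[\ell_1, \ell_2)$, say $\ell_1 \leq k_i < k_{i+1} < \cdots < k_m < \ell_2$ (with the convention that this list may be empty, in which case the conclusion is immediate from Corollary \ref{comparability of gammas between critical times} alone). By Lemma \ref{bounding mult intersec}(c), the total number of type-$2$ critical times in $[0, q_{n+1})$ is at most $8N$, so in particular the length of this sub-list is at most $8N$.

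Next, I insert these times into the chain
$$\ell_1 \;\longrightarrow\; k_i \;\longrightarrow\; k_i + 1 \;\longrightarrow\; k_{i+1} \;\longrightarrow\; k_{i+1} + 1 \;\longrightarrow\; \cdots \;\longrightarrow\; k_m + 1 \;\longrightarrow\; \ell_2.$$
On each ``between-critical'' segment (namely $[\ell_1, k_i]$, each $[k_j + 1, k_{j+1}]$, and $[k_m + 1, \ell_2]$) no type-$2$ critical times occur, so Corollary \ref{comparability of gammas between critical times} applied with $I = \Delta^\ast$ yields a two-sided comparability $\Gamma(f^{a}(\Delta^\ast); B) \leq B_1^{2s}\, \Gamma(f^{b}(\Delta^\ast); B)$ for the relevant endpoints $a, b$ of that segment. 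On each single-step transition $k_j \to k_j + 1$, Lemma \ref{bounding gamma of critical iterates} applies (thanks precisely to the combinatorial hypothesis $a_{n+1} \geq 2$ or $a_{n+1} = a_{n+2} = 1$) and gives the one-sided bound $\Gamma(f^{k_j + 1}(\Delta^\ast); B) \leq B_3\, \Gamma(f^{k_j}(\Delta^\ast); B)$.

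Concatenating these estimates, each of the at most $8N + 1$ block-end comparisons contributes a factor of $B_1^{2s}$ and each of the at most $8N$ critical jumps contributes a factor of $B_3$, yielding
$$\Gamma(f^{\ell_2}(\Delta^\ast); B) \;\leq\; B_1^{2s(8N + 1)}\, B_3^{8N}\; \Gamma(f^{\ell_1}(\Delta^\ast); B),$$
so the claim holds with $B_4 \eqdef B_1^{2s(8N + 1)} B_3^{8N}$, a constant that depends only on $f$ and $s$ (through $N$, $B_1$, and $B_3$).

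There is no real obstacle: the combinatorial bound from Lemma \ref{bounding mult intersec}(c) keeps the chain length uniformly bounded, and Lemmas \ref{comparability of gammas 1} and \ref{bounding gamma of critical iterates} provide all the per-step estimates, reducing the argument to careful bookkeeping. The one mild subtlety is checking that Corollary \ref{comparability of gammas between critical times} may be applied with $I = \Delta^\ast$; this is legitimate because its proof rests on Lemma \ref{comparability of gammas 1}, which is stated precisely for intervals $I \subset \Delta^\ast$. It is also worth noting that the combinatorial hypothesis on $n$ is used only at the critical transitions (through Lemma \ref{bounding gamma of critical iterates}); the between-critical estimates from Corollary \ref{comparability of gammas between critical times} require no such restriction.
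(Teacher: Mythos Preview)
Your proposal is correct and is exactly the approach the paper intends: the paper simply states that the result follows from ``combining Corollary~\ref{comparability of gammas between critical times} and Lemma~\ref{bounding gamma of critical iterates},'' and your chain argument with at most $8N$ critical jumps (bounded via Lemma~\ref{bounding mult intersec}(c)) is the natural elaboration of that sentence. Your observation that Corollary~\ref{comparability of gammas between critical times} may be applied with $I = \Delta^\ast$ because the underlying Lemma~\ref{comparability of gammas 1} is stated for $I \subset \Delta^\ast$ is a valid and useful remark.
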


\smallskip

\subsection{Ergodicity} \label{proof ergodic}

We are now ready to prove the ergodicity of $\nu$ with respect to $f$.

\begin{theorem}[Ergodicity] \label{ergodicity} Let $s > 0$ and let $\nu \in \auto{s}$ be an automorphic measure of exponent $s$ for $f$. Then $\nu$ is ergodic with respect to $f$.
\end{theorem}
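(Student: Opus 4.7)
The plan is to argue by contradiction. Suppose there exists an $f$-invariant Borel set $B\subset S^1$ with $0<\nu(B)<1$; I will contradict the Lebesgue differentiation theorem for $\nu$ by combining it with the uniform comparability of $\Gamma$-ratios across long atoms provided by Theorem~\ref{most general comparability of ratios}. Since $\nu$ has no atoms (Lemma~\ref{no atoms}) and is a Radon measure on $S^1$, the Besicovitch differentiation theorem yields, for $\nu$-a.e.\ $y$, the usual density statement along any family $I_k\ni y$ with bounded eccentricity and $|I_k|\to 0$. Pick density points $y_1\in B$ and $y_2\in S^1\setminus B$ lying outside the countable forward orbit of the critical points, so that $y_i$ is interior to the unique atom $\Delta_n^{(i)}\in\mathcal{P}_n(c)$ containing it at every level. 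By the Real Bounds (Theorem~\ref{real bounds}), $|\Delta_n^{(i),\ast}|\asymp |\Delta_n^{(i)}|\to 0$ and $y_i$ is located at bounded eccentricity in $\Delta_n^{(i),\ast}$, so
\[
\Gamma\bigl(\Delta_n^{(1),\ast};B\bigr)\longrightarrow 1,\qquad \Gamma\bigl(\Delta_n^{(2),\ast};B\bigr)\longrightarrow 0.
\]

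Next I would apply Theorem~\ref{most general comparability of ratios} twice, to the invariant sets $B$ and $S^1\setminus B$. Part~(a), combined on both sets, yields that for every $n\geq n_0$ and every pair of long atoms $\Delta,\Delta'\in\mathcal{P}_n$,
\[
B_2^{-2}\;\leq\;\Gamma(\Delta;B)/\Gamma(\Delta';B)\;\leq\;B_2^{2},
\]
so the long-atom $\Gamma$-values at each level are uniformly equivalent. Part~(b) applied to $S^1$, together with the Real Bounds (which make atoms in a star comparable in length), shows that any short atom in a star $\Delta^\ast$ has $\nu$-mass within a universal factor of an adjacent long atom's mass. Since short atoms are isolated in $\mathcal{P}_n$, every star contains at least one long atom, and that long atom carries a uniform fraction $w\geq c_0=c_0(f,s)>0$ of the total mass $\nu(\Delta^\ast)$.

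The contradiction then comes easily. Writing $\Gamma(\Delta_n^{(1),\ast};B)$ as a convex combination $\sum_j w_j\Gamma(\Delta_j;B)$ over the three atoms composing $\Delta_n^{(1),\ast}$, the relation $\sum_j w_j(1-\Gamma_j)\to 0$ (which follows from convergence to $1$ since each $\Gamma_j\leq 1$) forces $\Gamma_j\to 1$ on every index $j$ whose weight is bounded below by $c_0$; in particular some long atom in $\Delta_n^{(1),\ast}$ has $\Gamma\to 1$. By the uniform long-atom comparability, \emph{every} long atom $\Delta\in\mathcal{P}_n$ then satisfies $\Gamma(\Delta;B)\geq B_2^{-2}/2$ for all sufficiently large $n$. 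But the same weight lower bound guarantees that $\Delta_n^{(2),\ast}$ likewise contains a long atom of weight $\geq c_0$, so
\[
\Gamma\bigl(\Delta_n^{(2),\ast};B\bigr)\;\geq\;c_0\cdot B_2^{-2}/2\;>\;0,
\]
contradicting the convergence of this quantity to $0$. Therefore $\nu(B)\in\{0,1\}$, and $\nu$ is ergodic.

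The main obstacle is the uniform long-atom weight bound $c_0$: when a star mixes atom types (which does happen whenever a long atom sits next to the short atom of its block), one must use the one-sided inequality ``short~$\lesssim$~long'' of Theorem~\ref{most general comparability of ratios}(b) to rule out the possibility that the short atom crowds out the long atom measure-wise; part~(a), which only compares same-type atoms, would not suffice on its own. The Lebesgue differentiation step itself is routine, the required bounded eccentricity being inherited from the Real Bounds.
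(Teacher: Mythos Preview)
Your argument is correct, and it takes a genuinely different route from the paper's proof.

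The paper picks a single density point $x\in S^1\setminus B$ and then \emph{iterates} the star $\Delta_n^\ast(x)$: using the $\Gamma$-machinery of \S\ref{gamma} (critical times of type~2, the enlarged neighbourhoods $\chdois{\Delta}$, Lemma~\ref{Koebe for measure 2}, Lemma~\ref{bounding gamma of critical iterates}, and Corollary~\ref{gammas decrease}), it shows that $\Gamma(f^i(\Delta_n^\ast);B)\le B_4\,\Gamma(\Delta_n^\ast;B)$ for all $0\le i\le q_{n+1}$; since these iterates cover the circle with bounded multiplicity, this forces $\nu(B)\le 4B_4\epsilon$. This requires choosing $n$ with $a_{n+1}\ge 2$ or $a_{n+1}=a_{n+2}=1$, because Lemma~\ref{bounding gamma of critical iterates} needs the sharpened form of Theorem~\ref{most general comparability of ratios}.

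You instead pick \emph{two} density points, one in $B$ and one in $S^1\setminus B$, and compare the $\Gamma$-ratios of long atoms \emph{statically} at level $n$ via parts (a) and (b) of Theorem~\ref{most general comparability of ratios} (applied with the invariant sets $B$ and $B^c$, or equivalently $B$ and $S^1$). This bypasses the whole of \S\ref{gamma}: you use neither $\chdois{\Delta}$, nor Lemma~\ref{Koebe for measure 2}, nor the arithmetic case split on $a_{n+1}$. The only extra step is your uniform weight bound $c_0$, which is a clean consequence of Theorem~\ref{most general comparability of ratios}(b) combined with the Real Bounds (any long atom in a star carries a definite fraction of the star's $\nu$-mass, since its short neighbours are $\omega$-dominated by it and have comparable length).

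What each approach buys: yours is shorter and shows that ergodicity follows already from Theorem~\ref{most general comparability of ratios} without the refinements of \S\ref{gamma}; the paper's route develops a control on $\Gamma(f^i(\Delta^\ast);B)$ along orbits that is sharper than needed here but may be of independent use. One small remark: applying Theorem~\ref{most general comparability of ratios} with the pair $(B,S^1)$ rather than $(B,S^1\setminus B)$ gives the $\Gamma$-comparison in one line, since $\Gamma(\Delta;B)=\omega_B(\Delta)/\omega_{S^1}(\Delta)$; your choice works too, with the same constant $B_2^2$.
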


\begin{proof} Let $B \subset S^1$ be a Borel $f$-invariant set such that $\nu(B) < 1$. Our aim is to show that, in fact, $\nu(B) = 0$. For $x \in S^1$, consider the sequence
\[
	\mathcal{V}(x) = \setbuilder{\Delta^\ast}{n \geq 0, \Delta \in \mathcal{P}_n, x \in \Delta}
\]
of triples of adjacent atoms from the dynamical partitions $\mathcal{P}_n$ such that $x$ is contained in the central atom of the triple. As $n$ increases, the triples of level $n$ in this family shrink to $x$ while mantaining definite space on both sides (by the Real Bounds). If $x \in \Lambda$, then there is a unique atom $\Delta_n(x) \in \mathcal{P}_n$ that contains $x$, and $x$ is contained in its interior. Therefore, for $x \in \Lambda$, $\mathcal{V}(x)$ contains precisely one triple ($\Delta_n^\ast(x)$) of each level $n \geq 0$.

Since $\Lambda \setminus B$ has positive $\nu$-measure, we claim that for any given $\epsilon > 0$ there exist $x \in \Lambda \setminus B$ and $n_2 \geq n_1(f) \geq n_0(f)$ such that, for all $n \geq n_2$,
\begin{equation} \label{lebesgue density epsilon}
    \Gamma(\Delta_n^\ast(x); B) = \frac{\nu(\Delta_n^\ast(x) \cap B)}{\nu(\Delta_n^\ast(x))} < \epsilon\,.
\end{equation}
Indeed, note that since $\nu$ has no atoms and is supported on the whole circle (recall \S \ref{auto}), the map $h \colon S^1 \to S^1$ given by $h(x)=\int_{[0, x]} \, d\nu$ is a circle homeomorphism, which identifies the measure $\nu$ with Lebesgue measure on $S^1$. Thus, the existence of a point $x$ satisfying \eqref{lebesgue density epsilon} follows from the standard Lebesgue Density Theorem.

Now, some necessary distinctions depending on the combinatorics of $f$ must be made. If $\rho$ is the rotation number of $f$, $\rho = [a_0, a_1, \cdots]$, then either we have $a_n = 1$ for every sufficiently large $n$, or $a_n \geq 2$ occurs infinitely often. In any case, we can choose $n \geq n_2$ such that either $a_{n+1} = a_{n+2} = 1$ (in the first case) or $a_{n+1} \geq 2$ (in the latter case). We thus fix some level $n \geq n_2$ that satisfies one of these conditions, so that both \eqref{lebesgue density epsilon} and Corollary \ref{gammas decrease} hold simultaneously.

Observe that the collection $\listset{f^i(\Delta_n^\ast(x))}_{i=0}^{q_{n+1}}$ covers the circle and has intersection multiplicity at most $4$ (see Lemma \ref{bounding mult intersec}), so
\begin{equation} \label{bounding measure of B}
\begin{split}
    \nu(B) &= \nu \left( \bigcup_{i=0}^{q_{n+1}} \, f^i(\Delta_n^\ast(x)) \cap B \right) \leq \sum_{i=0}^{q_{n+1}} \, \nu(f^i(\Delta_n^\ast(x)) \cap B) \\
    &= \sum_{i=0}^{q_{n+1}} \, \nu(f^i(\Delta_n^\ast(x))) \, \Gamma(f^i(\Delta_n^\ast(x)); B)
\end{split}
\end{equation}
and furthermore,
\begin{equation} \label{mult intersect and measure}
\sum_{i=0}^{q_{n+1}} \, \nu(f^i(\Delta_n^\ast(x))) \leq 4
\end{equation}
(see Remark \ref{mult intersec and summability}).

From Corollary \ref{gammas decrease},
\begin{equation} \label{gamma bounds}
\Gamma(f^i(\Delta_n^\ast(x)); B) \leq B_4 \, \Gamma(\Delta_n^\ast(x)); B)
\end{equation}
so, plugging \eqref{gamma bounds}, \eqref{mult intersect and measure} and \eqref{lebesgue density epsilon} into \eqref{bounding measure of B}:
\begin{equation} \label{better bound on measure of B}
\begin{split}
\nu(B) &\leq \sum_{i=0}^{q_{n+1}} \, \nu(f^i(\Delta_n^\ast(x))) \, \Gamma(f^i(\Delta_n^\ast(x)); B) \leq B_4 \, \Gamma(\Delta_n^\ast(x); B) \, \sum_{i=0}^{q_{n+1}} \, \nu(f^i(\Delta_n^\ast(x))) \\
&\leq 4 \, B_4 \, \frac{\nu(\Delta_n^\ast(x) \cap B)}{\nu(\Delta_n^\ast(x))} < 4 \, B_4 \, \epsilon.
\end{split}
\end{equation}
By letting $\epsilon \to 0$, we get $\nu(B) = 0$.

Thus, there is no Borel $f$-invariant set $B \subset S^1$ such that $0 < \nu(B) < 1$, which proves that $\nu$ is ergodic.
\end{proof}

\subsection{Uniqueness} \label{proof unique}

We now show that ergodicity of $f$-automorphic measures of positive exponent implies that there is a unique $f$-automorphic measure of exponent $s$ for each $s > 0$. We further extend this uniqueness statement to finite signed measures (\emph{i.e.}, continuous linear functionals on $C^0(S^1)$).

\begin{proof}[Proof of Theorem \ref{exist-unique}, uniqueness part] Arguing by contradiction, suppose there is some $s > 0$ such that $\auto{s}$ contains two distinct measures, $\mu_s, \nu$. First, suppose $\nu \ll \mu_s$, and let $\psi \in L^1(\nu)$ be the Radon-Nikodym derivative
\[
	\psi = \frac{d\nu}{d\mu_s} \, .
\]
Then, as a simple calculation shows, $\psi \circ f = \psi$ $\mu_s$-almost everywhere, i.e. $\psi$ is $f$-invariant $\mu_s$-almost everywhere. But, from Theorem \ref{ergodicity}, $\mu_s$ is ergodic for $f$, so $\psi$ must be constant $\mu_s$-a.e. Since $\int_{S^1} \, \psi \, d\mu_s = 1$, we conclude that $\psi = 1$ $\mu_s$-almost everywhere, and thus, $\mu_s = \nu$, contradicting our assumption that $\mu_s, \nu$ are distinct.

Finally, if $\nu$ is not absolutely continuous with respect to $\mu_s$, let $\rho = \frac{1}{2}(\mu_s + \nu)$. Then $\rho \in \auto{s}$ (since $\auto{s}$ can be easily verified to be convex) and $\mu_s \ll \rho$, $\nu \ll \rho$. Thus, from the previous case, we must have $\mu_s = \rho = \nu$,
once more in contradiction to our assumption that $\mu_s, \nu$ are distinct.
\end{proof}

Thus, we have now given a complete proof of Theorem \ref{exist-unique}. Correspondingly, for $s > 0$, we shall denote the unique $f$-automorphic measure of exponent $s$ by $\mu_s$.

Since Lebesgue measure is always $f$-automorphic of exponent $1$, we have the following immediate consequence of Theorem \ref{exist-unique}:

\begin{coro} \label{lebesgue is unique} Lebesgue measure is the unique automorphic measure of exponent $1$ for~$f$.
\end{coro}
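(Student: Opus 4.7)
The plan is to simply verify directly that Lebesgue measure belongs to $\auto{1}$ and then invoke the uniqueness part of Theorem \ref{exist-unique}, which was established in the preceding subsection. The entire argument is essentially a one-line application of the change of variables formula, so no real obstacle is anticipated.

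More concretely, I would first observe that $f \colon S^1 \to S^1$ is an orientation-preserving $C^3$ circle homeomorphism, and that $Df$ is a non-negative continuous function on $S^1$ which vanishes precisely on the finite set $\crit{f}$. For any $\phi \in C^0(S^1)$, the standard change of variables $y = f(x)$ (valid because $f$ is a continuous, piecewise-$C^3$ orientation-preserving bijection of $S^1$ with $Df \geq 0$ everywhere) yields
\begin{equation*}
\int_{S^1} \phi \, d\Leb \;=\; \int_{S^1} \phi(y) \, dy \;=\; \int_{S^1} \phi(f(x)) \, Df(x) \, dx \;=\; \int_{S^1} (\phi \circ f) \, (Df)^1 \, d\Leb\,.
\end{equation*}
Comparing with Definition \ref{def automorphic measures}, this is exactly the identity that characterizes an $f$-automorphic measure of exponent $s=1$. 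Hence $\Leb \in \auto{1}$.

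Having produced Lebesgue measure as an element of $\auto{1}$, the conclusion follows at once: by the uniqueness part of Theorem \ref{exist-unique} (already proved above in \S\ref{proof unique}), there is exactly one automorphic measure of exponent $1$, so it must coincide with normalized Lebesgue measure on $S^1$. No further estimates are required. The only mildly delicate point worth mentioning explicitly is justifying the change of variables in the presence of critical points of $f$, but since $f$ is a bona fide homeomorphism of $S^1$ with $Df \in C^0(S^1)$ and $Df \geq 0$, this is a routine one-dimensional substitution (one can, if desired, split $S^1$ into finitely many arcs on which $f$ restricts to a $C^3$ diffeomorphism onto its image and sum the contributions).
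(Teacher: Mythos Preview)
Your proposal is correct and matches the paper's own argument: the paper simply remarks that Lebesgue measure is always $f$-automorphic of exponent $1$ (precisely the change-of-variables computation you wrote out) and then invokes the uniqueness part of Theorem~\ref{exist-unique}. Your explicit justification of the substitution in the presence of critical points is a welcome but inessential addition.
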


We now show that the uniqueness statement of Theorem \ref{exist-unique} (and in particular, Corollary \ref{lebesgue is unique}) remains true (up to a scalar multiple) in the context of continuous linear functionals on $C^0(S^1)$:

\begin{coro} \label{signed uniqueness} Let $s > 0$. If $L \in C^0(S^1)^\ast$ is such that 
\begin{equation} \label{eq signed}
	\innerprod{L}{\phi} = \innerprod{L}{(\phi \circ f)(D f)^s}
\end{equation}
for all $\phi \in C^0(S^1)$, then
\begin{equation} \label{eqeq signed}
L = \innerprod{L}{1} \, \mu_s \, .
\end{equation}
\end{coro}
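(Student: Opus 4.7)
The plan is to identify $L$ with a finite signed Radon measure via the Riesz representation theorem, at which point the functional equation \eqref{eq signed} becomes the measure identity $\lambda = f_\ast((Df)^s\lambda)$, with $\lambda$ denoting the signed measure associated to $L$. Setting $c = \innerprod{L}{1}$ and $\lambda' = \lambda - c\,\mu_s$, and noting that $c\,\mu_s$ satisfies \eqref{eq signed} because $\mu_s \in \auto{s}$, we reduce the corollary to the following claim: any finite signed Radon measure $\lambda'$ on $S^1$ with total mass zero satisfying $\lambda' = f_\ast((Df)^s\lambda')$ must be the zero measure.

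To prove the claim, I would first take the Jordan decomposition $\lambda' = (\lambda')^+ - (\lambda')^-$, with $(\lambda')^\pm$ non-negative finite Radon measures supported on disjoint Borel sets $A^+, A^- \subset S^1$. Applying the operator $T_s\,\cdot\, = f_\ast((Df)^s\,\cdot\,)$, I would observe that $(Df)^s \, (\lambda')^{\pm}$ remains supported on $A^{\pm}$ since multiplication by the non-negative continuous function $(Df)^s$ is absolutely continuous with respect to $(\lambda')^{\pm}$; then, because $f$ is a homeomorphism, the measures $T_s (\lambda')^+$ and $T_s (\lambda')^-$ are supported on the disjoint Borel sets $f(A^+)$ and $f(A^-)$, hence they are mutually singular. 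Writing
\[
(\lambda')^+ - (\lambda')^- \;=\; \lambda' \;=\; T_s\lambda' \;=\; T_s(\lambda')^+ - T_s(\lambda')^-
\]
and invoking the uniqueness of the Jordan decomposition, I conclude that $(\lambda')^+ = T_s(\lambda')^+$ and $(\lambda')^- = T_s(\lambda')^-$ separately.

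Now each of $(\lambda')^\pm$ is a non-negative finite measure satisfying the defining equation of an $f$-automorphic measure of exponent $s$. If $(\lambda')^+(S^1) > 0$, normalizing yields an $f$-automorphic probability measure of exponent $s$, which by the uniqueness part of Theorem \ref{exist-unique} must equal $\mu_s$; the same applies to $(\lambda')^-$ if it has positive mass. However, by Proposition \ref{automorphic measure full support} the measure $\mu_s$ has full topological support on $S^1$, which is incompatible with the mutual singularity of $(\lambda')^+$ and $(\lambda')^-$. Hence at least one of them vanishes, and since $(\lambda')^+(S^1) = (\lambda')^-(S^1)$ from the assumption $\lambda'(S^1) = 0$, both must vanish, so $\lambda' = 0$ as desired.

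The main obstacle is the step establishing that the transfer operator $T_s$ preserves the Jordan decomposition; this is exactly where it becomes crucial that $f$ is a homeomorphism (so that disjoint supports are mapped to disjoint supports) and that $(Df)^s$ is non-negative (so that multiplication does not flip signs or disperse supports). Once that structural point is in place, the argument reduces cleanly to combining the uniqueness in Theorem \ref{exist-unique} with the full-support property of Proposition \ref{automorphic measure full support}.
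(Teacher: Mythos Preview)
Your proof is correct and follows essentially the same route as the paper: identify $L$ with a signed measure via Riesz, take the Jordan decomposition, show that each part separately satisfies the automorphic equation, and then invoke the uniqueness in Theorem~\ref{exist-unique}. The only differences are cosmetic: the paper works directly with $\nu$ rather than first subtracting $c\,\mu_s$, and it verifies that the Jordan parts are individually automorphic via the functional characterization $\int \phi\,d\nu_+ = \sup_{0\le\psi\le\phi}\int\psi\,d\nu$ together with positivity of the operator $\psi\mapsto(\psi\circ f)(Df)^s$, whereas you argue measure-theoretically that $f$, being a bijection, carries disjoint supports to disjoint supports so the Jordan decomposition is preserved by $T_s$. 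Your appeal to full support (Proposition~\ref{automorphic measure full support}) to rule out both Jordan parts being nonzero is fine but not actually needed: once you know $(\lambda')^\pm$ are both multiples of $\mu_s$, mutual singularity already forces one of them to vanish regardless of support.
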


\begin{proof} The proof is reproduced almost verbatim from ~\cite{DY}. From the Riesz Representation Theorem, there is a unique signed finite Radon measure $\nu$ on $S^1$ such that
\[
	\innerprod{L}{\phi} = \int_{S^1} \, \phi \, d\nu
\]
for all $\phi \in C^0(S^1)$. Therefore, it suffices to show that
\begin{equation} \label{eq1 signed}
	\nu = \nu(S^1) \, \mu_s \, .
\end{equation}

First, suppose $\nu$ is positive. Then $\tilde{\nu} \eqdef \frac{\nu}{\nu(S^1)} \in \auto{s}$, so
\[
	\nu = \nu(S^1) \, \tilde{\nu} = \nu(S^1) \, \mu_s \, .
\]

Now, for the general case, let $\nu = \nu_+ - \nu_-$ be the Jordan decomposition of $\nu$. We wish to show that
\begin{equation} \label{eq2 signed}
\begin{gathered}
\int_{S^1} \, \phi \, d\nu_+ = \int_{S^1} \, (\phi \circ f) (D f)^s \, d\nu_+ \, , \\
\int_{S^1} \, \phi \, d\nu_- = \int_{S^1} \, (\phi \circ f) (D f)^s \, d\nu_-
\end{gathered}
\end{equation}
for all $\phi \in C^0(S^1)$. Since the linear operator $U_s \colon C^0(S^1) \to C^0(S^1)$ given by
\[
	U_s(\psi) = (\psi \circ f) (D f)^s
\]
is positive, for all $\phi \in C^0(S^1)$ with $\phi \geq 0$, we have
\begin{equation} \label{eq3 signed}
\begin{split}
\int_{S^1} \, \phi \, d\nu_+ &= \sup_{0 \leq \psi \leq \phi} \, \int_{S^1} \, \psi \, d\nu = \sup_{0 \leq \psi \leq \phi} \,
\int_{S^1} \, (\psi \circ f) (D f)^s \, d\nu \\
&= \sup_{0 \leq \psi \leq U_s(\phi)} \, \int_{S^1} \, \psi \, d\nu = \int_{S^1} \, U_s(\phi) \, d\nu_+
\end{split}
\end{equation}
and similarly for $\nu_-$. It follows that \eqref{eq2 signed} holds for all continuous $\phi \geq 0$; by linearity, it must hold for all $\phi \in C^0(S^1)$. Thus, applying the first case to $\nu_+, \nu_-$, we have
\[
	\nu = \nu_+ - \nu_- = \nu_+(S^1) \, \mu_s - \nu_-(S^1) \, \mu_s = \nu(S^1) \, \mu_s
\]
which finishes the proof.
\end{proof}

\begin{remark} \label{remark Jordan decomp} It follows from \eqref{eqeq signed} that all linear functionals $L \in C^0(S^1)^\ast$ that satisfy \eqref{eq signed} have a {\it definite sign}: if $L = L_+ - L_-$ is its Jordan decomposition, then $L_+ = 0$ or $L_- = 0$, depending on whether $\innerprod{L}{1}$ is negative, positive or zero (and in this last case, $L = 0$).
\end{remark}

\section{Applications} \label{distr}

In this final section we prove Theorem \ref{no invar distr} (\S \ref{prelim inv distr}), Theorem \ref{TeoCexamples} (\S \ref{ProofThmC}) and Theorem \ref{denjoy-koksma} (\S \ref{ProofThmD}). With these purposes, we first review some basic results regarding invariant distributions for dynamical systems on compact manifolds.

\subsection{Cohomological equations and invariant distributions} \label{prelim inv distr}

Let $M$ be a compact smooth manifold. For integer $0 \leq r \leq \infty$, let $C^r(M)$ be the space of $C^r$ functions $u \colon M \to \R$, equipped with its $C^r$ topology. Recall that the $C^r$ topology turns $C^r(M)$ into a Banach space for finite $r$ and $C^\infty(M)$ into a Fr\'echet space, and that a distribution on $M$ is simply an element of the continuous dual space $C^\infty(M)^\ast$; we shall denote the space of distributions on $M$ by $\Distr{M}{}$, and the value of a distribution $T \in \Distr{M}{}$ acting on a function $u \in C^\infty(M)$ by $\innerprod{T}{u}$.

Suppose $T \in \Distr{M}{}$ and $0 \leq k < \infty$ are such that there exists $C > 0$ with
\[
	\absol{\innerprod{T}{u}}  \leq C \normof{u}_k \quad \forall \; u \in C^{\infty}(M).
\]
In this case, $T$ has a unique continuous extension $\tilde{T} \in C^k(M)^\ast$; we say that $T$ has {\it order at most $k$}. In fact, every $T \in C^k(M)^\ast$ is the unique continuous extension of a distribution on $M$. Denoting $C^k(M)^\ast$ by $\Distr{M}{k}$, we have the following chain of inclusions {\it modulo} unique extensions:
\[
	\Distr{M}{0} \hookrightarrow \Distr{M}{1} \hookrightarrow
	\cdots \hookrightarrow \Distr{M}{}.
\]
Observe that the Riesz Representation Theorem naturally identifies $\Distr{M}{0}$ with the space $\mathcal{M}(M)$ of signed finite Radon measures on $M$.

If $T \in \Distr{M}{}$ belongs to $\Distr{M}{k}$ for some finite $k$, we say that $T$ has finite order, and we define its order as the least such $k$. A noteworthy consequence of the compactness of $M$ is that all distributions on $M$ have finite order, i.e.
\[
	\Distr{M}{} = \bigcup_{k = 0}^\infty \, \Distr{M}{k} \, .
\]

Now, let $f \colon M \to M$ be a $C^r$ endomorphism of $M$, $0 \leq r \leq \infty$.

\begin{definition}[$C^\ell$-coboundary] \label{def coboundary} Let $0 \leq \ell \leq \infty$ and $\phi \in C^\ell(M)$. We say that $\phi$ is a {\it $C^\ell$-coboundary} for $f$ if the {\it cohomological equation}
\begin{equation} \label{CohomolEq}
u \circ f - u = \phi
\end{equation}
has a solution $u \in C^\ell(M)$.
\end{definition}

For integer $0 \leq \ell \leq \infty$, the set of $C^\ell$-coboundaries for $f$ forms a vector subspace of $C^\ell(M)$, which we shall denote by $\cobound\left(f, C^\ell(M)\right)$.

\begin{definition}[Invariant distribution] \label{invariant distr} We say that $T \in \Distr{M}{r}$ is {\it $f$-invariant} if 
\begin{equation} \label{eq invar distr}
	\innerprod{T}{u \circ f} = \innerprod{T}{u}
\end{equation}
for all $u \in C^\infty(M)$. 
\end{definition}

\begin{remark}\label{Sec8remauto} Let the manifold $M$ be the unit circle $S^1$. An $f$-automorphic measure $\nu$ of exponent $1$ naturally induces an $f$-invariant distribution $T\in\Distr{S^1}{1}$ by letting
\[
\innerprod{T}{u}=\int_{S^1}u'\,d\nu
\]
for all $u \in C^1(S^1)$. Indeed, note that
\[
\innerprod{T}{u \circ f}=\int_{S^1}(u \circ f)'\,d\nu=\int_{S^1}(u' \circ f)\,Df\,d\nu=\int_{S^1}u'\,d\nu=\innerprod{T}{u}.
\]
Of course, Lebesgue measure (which is automorphic of exponent $1$ for any $C^1$ circle homeo\-morphism) induces the null distribution $\innerprod{T}{u}=0$. However, automorphic measures of exponent $1$ different from Lebesgue provide non-trivial invariant distributions, see \S \ref{ProofThmC} below.
\end{remark}

For all integer $0 \leq k \leq r$, the set $\Distr{f}{k}$ of $f$-invariant distributions of order at most $k$ forms a vector subspace of $\Distr{M}{k}$. In fact, equations \eqref{CohomolEq} and \eqref{eq invar distr} (by unique extension to $C^k$ functions) identify $\Distr{f}{k}$ with the (continuous) annihilator of $\cobound\left(f, C^k(M)\right)$.

Thus, by the Hahn-Banach separation theorem,
\begin{equation} \label{KernelRep}
\closure{\cobound\left(f, C^k(M)\right)}{k} = \bigcap_{T \in \Distr{f}{k}} \ker T
\end{equation}
where $\closure{}{k}$ denotes closure in the $C^k$ topology.

Furthermore, we have the chain of inclusions
\[
	\Distr{f}{0} \hookrightarrow \Distr{f}{1} \hookrightarrow \cdots \hookrightarrow \Distr{f}{r}
\]
and also
\begin{equation} \label{invar distr have finite order}
\Distr{f}{r} = \bigcup_{k = 0}^r \, \Distr{f}{k} \, .
\end{equation}

The following proposition is a simple but crucial consequence of \eqref{KernelRep}.

\begin{prop} \label{DUE criterion} Let $f \colon M \to M$ be a $C^r$ endomorphism of a compact smooth manifold $M$, $0 \leq r \leq \infty$, and let $\mu$ be an $f$-invariant Radon probability measure on $M$. Let $0 \leq k \leq r$ be an integer. Then
\[
	\Distr{f}{k} = \R \mu
\]
if, and only if, the following holds. For any $\phi \in C^k(M)$ with $\int_{M} \, \phi \, d\mu = 0$, there is a sequence $\listset{\phi_n = u_n \circ f - u_n}_{n \geq 1} \subset \cobound\left(f, C^k(M)\right)$ of $C^k$-coboundaries for $f$ converging to $\phi$ in the $C^k$ topology.
\end{prop}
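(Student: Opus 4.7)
The plan is to establish the proposition through a chain of equivalences anchored on the Hahn--Banach identity \eqref{KernelRep}, which is already available from the discussion preceding the statement. First, I would use the fact that $\mu$ is $f$-invariant to observe that every coboundary $\phi = u \circ f - u$ satisfies $\int_M \phi \, d\mu = 0$, so $\cobound(f, C^k(M)) \subset \ker \mu$, where we write $\ker \mu = \{\phi \in C^k(M) : \int_M \phi \, d\mu = 0\}$. Since $\mu \in \Distr{M}{0} \subset \Distr{M}{k}$ is a continuous linear functional on $C^k(M)$, its kernel is a closed hyperplane in the $C^k$ topology, hence $\closure{\cobound(f, C^k(M))}{k} \subset \ker \mu$. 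This observation lets me rephrase the density condition in the statement --- that every $\phi \in \ker \mu$ be a $C^k$-limit of coboundaries --- as the equality $\closure{\cobound(f, C^k(M))}{k} = \ker \mu$.

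Next, applying \eqref{KernelRep} rewrites this equality as
\[
\bigcap_{T \in \Distr{f}{k}} \ker T = \ker \mu.
\]
The proposition is thereby reduced to the purely algebraic assertion that this identity is equivalent to $\Distr{f}{k} = \R \mu$. The forward direction is immediate: if every $f$-invariant $k$-distribution is a scalar multiple of $\mu$, then each nonzero such $T$ has kernel $\ker \mu$ while $\ker 0 = C^k(M)$, so the intersection over all $T \in \Distr{f}{k}$ is precisely $\ker \mu$. For the converse, I would assume the identity and fix an arbitrary $T \in \Distr{f}{k}$. Then $\ker T \supset \bigcap_{S \in \Distr{f}{k}} \ker S = \ker \mu$, and since $\mu \neq 0$ makes $\ker \mu$ a subspace of codimension one in $C^k(M)$, any continuous linear functional vanishing on $\ker \mu$ must be a scalar multiple of $\mu$. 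Hence $T \in \R \mu$; the reverse inclusion $\R \mu \subset \Distr{f}{k}$ follows at once from the $f$-invariance of $\mu$.

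This proof is essentially a formal manipulation once \eqref{KernelRep} is in hand, and no substantive obstacle arises. The only subtle point to keep track of is that the topology appearing in the density condition (the $C^k$ topology) must match the one used to define $\Distr{f}{k}$ --- which it does by construction --- so that the Hahn--Banach duality of \eqref{KernelRep} is genuinely applicable.
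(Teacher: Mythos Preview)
Your proposal is correct and follows precisely the approach the paper indicates: the paper does not spell out a proof but simply remarks that the proposition is ``a simple but crucial consequence of \eqref{KernelRep}'', and your argument is exactly the standard unpacking of that Hahn--Banach identity together with the elementary observation that $\cobound(f,C^k(M))\subset\ker\mu$.
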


With the above criterion at hand, we are ready to prove that Theorem \ref{exist-unique} implies Theorem \ref{no invar distr}. The proof given below is taken almost verbatim from ~\cite{NT}. We reproduce it here for the sake of completeness as well as to indicate the points of the proof in which estimates depending on the bounded variation of $\log Df$ for $C^{1+\ttt{bv}}$-diffeomorphisms must be replaced by estimates suitable for multicritical circle maps and where results from ~\cite{DY} must be replaced by consequences of Theorem \ref{exist-unique}. From Proposition \ref{DUE criterion}, it suffices to show that Theorem \ref{exist-unique} implies the following lemma.

\begin{lemma} \label{no distr crucial lemma} Let $u \in C^1(S^1)$ have zero $\mu$-mean, that is, $\int_{S^1} \, u \, d\mu = 0$. Then there is a sequence $v_n$ of $C^1$ functions $S^1 \to \R$ such that
\begin{equation} \label{eq1 no distr crucial lemma}
v_n \circ f - v_n \longrightarrow u
\end{equation}
and
\begin{equation} \label{eq2 no distr crucial lemma}
(v_n' \circ f) D f - v_n' \longrightarrow u'
\end{equation}
uniformly.
\end{lemma}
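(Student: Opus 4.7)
The plan is to prove Lemma \ref{no distr crucial lemma} via a Hahn-Banach duality argument that leverages the uniqueness of Lebesgue measure as the $f$-automorphic measure of exponent $1$ (Corollary \ref{lebesgue is unique}). Let $C^0_0(S^1) = \{\phi \in C^0(S^1) : \int_{S^1} \phi \, dx = 0\}$, and consider the continuous linear operator $L \colon C^0_0(S^1) \to C^0_0(S^1)$ defined by
\[
L(\phi) \; := \; \phi - (\phi \circ f) \, Df
\]
(it does take values in $C^0_0$, by the change of variables formula). The central claim is that the image $L\bigl(C^0_0(S^1)\bigr)$ is $C^0$-dense in $C^0_0(S^1)$.

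The density will be the main obstacle, and it is here that the two main inputs from the theory of automorphic measures enter. By the Hahn-Banach theorem, it suffices to show that every signed finite Radon measure $\nu$ on $S^1$ annihilating $L(C^0_0)$ must itself be a scalar multiple of Lebesgue. Such a $\nu$ satisfies $\nu - \mathcal{L}^*\nu = C \cdot \mathrm{Leb}$ for some real constant $C$, where $\mathcal{L}^*\nu$ is the signed measure defined by $\int_{S^1} \phi \, d(\mathcal{L}^*\nu) = \int_{S^1} (\phi \circ f)\, Df \, d\nu$. Iterating this relation and using that Lebesgue is $\mathcal{L}^*$-invariant (Corollary \ref{lebesgue is unique}), one obtains $\int_{S^1} Df^n \, d\nu = \nu(S^1) - nC$ for all $n \geq 1$. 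Specializing to $n = q_k$ and invoking the uniform bound $Df^{q_k} \leq C_1$ from Lemma \ref{lemma Dfqn}, the left-hand side stays bounded while the right-hand side grows linearly in $k$ unless $C = 0$. Therefore $\nu = \mathcal{L}^*\nu$, and Corollary \ref{signed uniqueness} now forces $\nu = \nu(S^1) \cdot \mathrm{Leb}$, which indeed annihilates $C^0_0$.

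With the density established, the construction of the $v_n$ is straightforward. For $u \in C^1(S^1)$ with $\int u \, d\mu = 0$, the derivative $u'$ lies in $C^0_0(S^1)$ by periodicity of $u$, so there exist $\phi_n \in C^0_0(S^1)$ with $L(\phi_n) \to u'$ uniformly. Define $v_n(x) := -\int_{x_0}^x \phi_n(t) \, dt$ for a fixed basepoint $x_0 \in S^1$; the vanishing mean of $\phi_n$ ensures that each $v_n$ descends to a well-defined periodic $C^1$ function on $S^1$, with $v_n' = -\phi_n$. Then $(v_n' \circ f) \, Df - v_n' = L(\phi_n) \to u'$ uniformly, which proves \eqref{eq2 no distr crucial lemma}. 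Since this is the derivative of $v_n \circ f - v_n$, the fundamental theorem of calculus yields that $v_n \circ f - v_n \to u + c$ uniformly for some constant $c$; integrating against $d\mu$, and using the $f$-invariance of $\mu$ together with $\int u \, d\mu = 0$, forces $c = 0$, which proves \eqref{eq1 no distr crucial lemma}.
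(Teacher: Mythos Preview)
Your argument is correct in substance, and its overall architecture matches the paper's: use Hahn--Banach together with the signed uniqueness of the exponent-$1$ automorphic measure (Corollary~\ref{signed uniqueness}) to produce $w_n$ with $(w_n\circ f)\,Df - w_n \to u'$, then integrate. The organizational difference is that you work from the outset in the subspace $C^0_0 = \ker(\Leb)$, so your $\phi_n$ automatically have zero Lebesgue mean and you bypass the explicit correction step that the paper performs via Lemma~\ref{no distr lemma prop} (the functions $\hat w_k$). The price is that your annihilator condition is weaker (only tested against $C^0_0$), so you must first rule out the extra constant $C$ in $\nu - \mathcal L^*\nu = C\cdot\Leb$; your iteration argument, combined with the bound $Df^{q_k}\le C_1$ from Lemma~\ref{lemma Dfqn}, does this cleanly. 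In effect, both approaches use the same key inputs (Corollary~\ref{signed uniqueness} and Lemma~\ref{lemma Dfqn}), just in a different order.

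One genuine imprecision: from $(v_n\circ f - v_n)' \to u'$ uniformly, the fundamental theorem of calculus does \emph{not} give $v_n\circ f - v_n \to u + c$ for a single constant $c$. What you actually get is a sequence of constants $c_n$ (e.g.\ $c_n = (v_n\circ f - v_n)(x_0) - u(x_0)$) with $(v_n\circ f - v_n) - u - c_n \to 0$ uniformly. Your final step then works as written: integrating against $\mu$ and using $f$-invariance and $\int u\,d\mu = 0$ gives $c_n \to 0$, hence $v_n\circ f - v_n \to u$. This is exactly how the paper handles it (see \eqref{eq1 prop implies lemma}--\eqref{eq2 prop implies lemma}); just replace ``some constant $c$'' by ``a sequence $c_n$'' and the proof is complete.
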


Let $u \in C^1(S^1)$, $\int_{S^1} \, u \, d\mu = 0$, be fixed. The construction of the sequence $v_n$ from Lemma \ref{no distr crucial lemma} shall be derived as a consequence of the following fact.

\begin{prop} \label{no distr crucial prop} There exists a sequence $\listset{w_n}_{n \geq 1} \subset C^0(S^1)$ such that 
\begin{equation} \label{eq no distr crucial prop}
(w_n \circ f) D f - w_n \longrightarrow u'
\end{equation} 
uniformly and, for all $n \geq 1$, $\int_{S^1} \, w_n \, d\Leb = 0$.
\end{prop}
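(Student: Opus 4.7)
The plan is to use Hahn--Banach duality together with the uniqueness statement of Corollary \ref{signed uniqueness} (applied at exponent $s=1$) and the control on $Df^{q_n}$ provided by Lemma \ref{lemma Dfqn}. Set $V_0 := \{v \in C^0(S^1) : \int_{S^1} v\, d\Leb = 0\}$ and define the bounded linear operator $T \colon C^0(S^1) \to C^0(S^1)$ by $T(w) = (w \circ f) Df - w$. A change of variables shows $\int_{S^1} T(w)\, d\Leb = 0$, so $T$ maps into $V_0$; in particular the restriction $T|_{V_0} \colon V_0 \to V_0$ is well defined. Since $u \in C^1(S^1)$ is periodic, $u' \in V_0$, and the proposition reduces to showing that $u' \in \overline{T(V_0)}$, where the closure is taken in the uniform topology.

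By the Hahn--Banach theorem, it suffices to show that any $\tilde L \in V_0^*$ which annihilates $T(V_0)$ must itself vanish. Given such a $\tilde L$, lift it to any $L \in C^0(S^1)^*$; any two lifts differ by a scalar multiple of Lebesgue measure (since the annihilator of $V_0$ in $C^0(S^1)^*$ is precisely $\R \cdot \Leb$). Decomposing $w \in C^0(S^1)$ as $w = w_0 + c$ with $w_0 \in V_0$ and $c = \int_{S^1} w\, d\Leb$, and using both that $L \circ T = 0$ on $V_0$ and that $T(1) = Df - 1 \in V_0$, one computes
\[
L((w \circ f) Df) - L(w) \;=\; \lambda \int_{S^1} w\, d\Leb \,,
\]
where the constant $\lambda := \tilde L(Df - 1)$ depends only on $\tilde L$ (and not on the chosen lift, since $Df - 1 \in V_0$).

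Let $U_1 w := (w \circ f) Df$. Iterating the displayed identity by replacing $w$ with $U_1^{k-1} w$ for $k = 1, \ldots, n$, and using that $\int_{S^1} U_1 w \, d\Leb = \int_{S^1} w\, d\Leb$ at each step, yields $L(U_1^n w) = L(w) + n\lambda \int_{S^1} w\, d\Leb$ for every $n \geq 1$. Specializing $w = 1$ gives $L(Df^n) = L(1) + n\lambda$. Taking $n = q_k$ and invoking Lemma \ref{lemma Dfqn}, the left-hand side is bounded by $\|L\| \cdot C_1$ uniformly in $k$, whereas the right-hand side diverges unless $\lambda = 0$. Hence $\lambda = 0$, and $L$ now satisfies the full automorphic identity on $C^0(S^1)$. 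By Corollary \ref{signed uniqueness} at $s=1$ we conclude $L = L(1) \cdot \Leb$, which restricts to zero on $V_0$; that is, $\tilde L = 0$.

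The main obstacle I anticipate is precisely the lifting step, which gives rise to the cocycle defect $\lambda \int_{S^1} w\, d\Leb$: a functional on $V_0$ does not automatically extend to one satisfying the automorphic identity on all of $C^0(S^1)$, so Corollary \ref{signed uniqueness} cannot be applied directly. Invoking Lemma \ref{lemma Dfqn} along the subsequence of return times $q_k$ is exactly what is needed to kill this defect and close the argument; note that without the built-in constraint that $w_n$ have zero Lebesgue mean, one could not work inside $V_0$ and this defect term would not even appear.
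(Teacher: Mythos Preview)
Your argument is correct. Both the paper's proof and yours rest on the same two pillars --- Corollary~\ref{signed uniqueness} at $s=1$ and the bound $\|Df^{q_k}\|_{C^0}\le C_1$ from Lemma~\ref{lemma Dfqn} --- but they are packaged differently.

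The paper proceeds in two stages. First, it applies Hahn--Banach on the full space $C^0(S^1)$ to obtain a sequence $\bar w_n$ with $(\bar w_n\circ f)Df-\bar w_n\to u'$, with no constraint on $\int\bar w_n\,d\Leb$; Corollary~\ref{signed uniqueness} is invoked directly at this step, since any separating functional already satisfies the automorphic identity on all of $C^0$. Second, the zero-mean constraint is restored \emph{a posteriori}: a separate Lemma~\ref{no distr lemma prop} builds, explicitly, the averages $\hat w_k=1-\tfrac{1}{q_k}\sum_{i=0}^{q_k-1}Df^i$, uses Lemma~\ref{lemma Dfqn} to show $(\hat w_k\circ f)Df-\hat w_k\to Df-1$, and these are combined with the $\bar w_n$ to cancel the means.

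You instead run Hahn--Banach directly on the hyperplane $V_0$. The price is that a separating functional on $V_0$ need not extend to one satisfying the full automorphic identity; the obstruction is the single scalar $\lambda=\tilde L(Df-1)$. Your iteration trick $L(Df^{q_k})=L(1)+q_k\lambda$, bounded via Lemma~\ref{lemma Dfqn}, kills $\lambda$ cleanly and then Corollary~\ref{signed uniqueness} applies. This is more economical: it bypasses the explicit construction of Lemma~\ref{no distr lemma prop} and the subsequent splicing argument, at the cost of a slightly more delicate duality setup. The paper's route has the advantage that Lemma~\ref{no distr lemma prop} is an independent concrete statement (reused later in Lemma~\ref{DK key lemma}), whereas your growth argument is tailored to this single proposition.
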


Indeed, assume that Proposition \ref{no distr crucial prop} is true, and for each $n \geq 1$, let $v_n \colon S^1 \to \R$ be defined by
\begin{equation*}
v_n(x) = \int_{[0, x]} \, w_n(y) \, dy\,,
\end{equation*}
where $[0, x]$ is the positively oriented closed circle interval with endpoints $0, x$.

Observe that, since $\int_{S^1} \, w_n \, d\Leb = 0$, $v_n$ is well-defined as a $\Z$-periodic function from $\R$ to $\R$ (\emph{i.e.}, $v_n \in C^0(S^1)$). Furthermore, $v_n \in C^1(S^1)$ and $v_n' = w_n$, so
\begin{equation*}
(v_n' \circ f) Df - v_n' = (w_n \circ f) Df - w_n \, .
\end{equation*}
Thus, Proposition \ref{no distr crucial prop} implies that the sequence $v_n$ we have just defined satisfies \eqref{eq2 no distr crucial lemma}. It remains to show that the $v_n \circ f - v_n$ also converge uniformly to $u$.

Well, for any $x \in S^1$, we have
\begin{equation} \label{eq1 prop implies lemma}
\begin{split}
v_n(f(x)) - v_n(x) - u(x) &= \int_{[0, f(x)]} \, w_n(y) \, dy - \int_{[0, x]} \, w_n(y) \, dy \\ 
&\quad - \left(u(0) + \int_{[0, x]} \, u'(y) \, dy \right) \\
&= \int_{[0, x]} \, \left[ \left( w_n(f(y)) D f(y) - w_n(y) \right) - u'(y) \right] \, dy - c_n
\end{split}
\end{equation}
where $c_n \eqdef u(0) - \int_{[0, f(0)]} \, w_n(y) \, dy$. Thus,
\begin{equation} \label{eq2 prop implies lemma}
\normof{(v_n \circ f - v_n + c_n) - u}_{C^0} \leq \normof{[(w_n \circ f) D f - w_n] - u'}_{C^0} \, .
\end{equation}

Now, from Proposition \ref{no distr crucial prop}, the right-hand side in \eqref{eq2 prop implies lemma} converges to $0$, so the sequence $\listset{v_n \circ f - v_n + c_n}_{n \geq 1}$ converges uniformly to $u$. Consequently, since $\mu$ is $f$-invariant,
\begin{equation}
c_n = \int_{S^1} \, (v_n \circ f - v_n + c_n) \, d\mu \longrightarrow \int_{S^1} \, u \, d\mu = 0
\end{equation}
so we conclude that \eqref{eq1 no distr crucial lemma} holds for the sequence $\listset{v_n}$ as well. This finishes the proof of Lemma \ref{no distr crucial lemma}, assuming Proposition \ref{no distr crucial prop}.

With the knowledge that Proposition \ref{no distr crucial prop} implies Lemma \ref{no distr crucial lemma} (which in turn implies Theorem \ref{no invar distr}), we now show that Theorem \ref{exist-unique} implies this proposition. First, a technical lemma, which consists of Proposition \ref{no distr crucial prop} in the special case $u=f-\Id$.

\begin{lemma} \label{no distr lemma prop} There exists a sequence $\listset{\hat{w}_k}_{k \geq 1} \subset C^0(S^1)$, with $(\hat{w}_k \circ f)Df-\hat{w}_k$ converging uniformly to $D f - 1$, such that, for all $k \geq 1$, $\int_{S^1} \, \hat{w}_k \, d\Leb = 0$.
\end{lemma}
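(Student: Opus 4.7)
The plan is to prove Lemma \ref{no distr lemma prop} by a Hahn--Banach duality argument whose key inputs are Corollary \ref{signed uniqueness} and the uniform bound from Lemma \ref{lemma Dfqn}. Let $V = \setbuilder{w \in C^0(S^1)}{\int_{S^1} w \, d\Leb = 0}$ and define the linear operator $T \colon C^0(S^1) \to C^0(S^1)$ by $Tw = (w \circ f) Df - w$. A change of variables shows $\int_{S^1} Tw \, d\Leb = 0$ for every $w \in C^0(S^1)$, so $T$ maps into $V$, and we have the ``trivial'' identity $Df - 1 = T(1)$; the obstruction is that $1 \notin V$. The goal is to show that $Df - 1$ belongs to the $C^0$-closure of the restricted image $T(V)$, after which picking $\hat{w}_k \in V$ with $\normof{T \hat{w}_k - (Df - 1)}_{C^0} < 1/k$ immediately yields the lemma.

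By the Hahn--Banach separation theorem, it suffices to prove that every $L \in C^0(S^1)^\ast$ annihilating $T(V)$ also annihilates $Df - 1$. Let $\nu$ be the signed finite Radon measure representing such an $L$. The vanishing of $L$ on $T(V)$ is equivalent to $T^\ast L \in V^\perp$, and since $V$ has codimension one in $C^0(S^1)$ (with complement spanned by constants), $V^\perp = \R \Leb$. Hence there exists $\alpha \in \R$ with
\[
\int_{S^1} \bigl[(w \circ f) Df - w \bigr] \, d\nu \;=\; \alpha \int_{S^1} w \, d\Leb \qquad \text{for every } w \in C^0(S^1),
\]
which is an inhomogeneous version of the signed automorphic equation for $\nu$.

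The crux of the argument is to show that $\alpha = 0$. I would iterate this identity, using at each step that Lebesgue is $f$-automorphic of exponent $1$ (Corollary \ref{lebesgue is unique}) to handle the $\Leb$ term. A straightforward induction yields
\[
\int_{S^1} (w \circ f^n) \, Df^n \, d\nu \;=\; \int_{S^1} w \, d\nu \;+\; n \alpha \int_{S^1} w \, d\Leb
\]
for every $n \geq 1$ and every $w \in C^0(S^1)$. Specializing to $w \equiv 1$ and $n = q_k$ gives $\int_{S^1} Df^{q_k} \, d\nu = \nu(S^1) + q_k \alpha$. By Lemma \ref{lemma Dfqn}, the left-hand side is bounded in absolute value by $C_1 \, \absol{\nu}(S^1)$ uniformly in $k$, and since $q_k \to \infty$ this forces $\alpha = 0$.

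With $\alpha = 0$, the functional $L$ satisfies the hypothesis \eqref{eq signed} of Corollary \ref{signed uniqueness} for $s = 1$, hence $L = L(1) \, \Leb$, and therefore $L(Df - 1) = L(1) \int_{S^1} (Df - 1) \, d\Leb = 0$, completing the Hahn--Banach step. The hard part is spotting the inhomogeneous iteration formula and exploiting Lemma \ref{lemma Dfqn} along the subsequence $q_k$ to annihilate the constant $\alpha$; once this is done, the uniqueness statement of Corollary \ref{signed uniqueness} takes over and the argument closes cleanly.
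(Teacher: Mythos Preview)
Your proof is correct but follows a genuinely different route from the paper. The paper is entirely constructive: it simply sets
\[
\hat{w}_k \;=\; 1 - \frac{1}{q_k}\sum_{i=0}^{q_k-1} Df^i,
\]
checks directly that $\int_{S^1}\hat{w}_k\,d\Leb = 0$, and computes via a telescoping sum that $(\hat{w}_k\circ f)Df - \hat{w}_k - (Df-1) = \frac{1}{q_k}(1 - Df^{q_k})$, which tends to $0$ uniformly by Lemma~\ref{lemma Dfqn}. Your argument is instead a duality existence proof, relying on Corollary~\ref{signed uniqueness} (signed uniqueness of exponent-$1$ automorphic measures) together with Lemma~\ref{lemma Dfqn} to kill the obstruction constant $\alpha$. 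Both proofs ultimately hinge on the same bound $\|Df^{q_k}\|_{C^0}\le C_1$, but the paper's version is more elementary in that it needs nothing about automorphic measures at this stage and actually produces the sequence; your version is non-constructive but conceptually explains the lemma as a direct manifestation of the uniqueness of $\mu_1=\Leb$, and in effect anticipates the Hahn--Banach step that the paper carries out later in Proposition~\ref{no distr crucial prop}.
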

\begin{proof} For $k \geq 1$, let $\hat{w}_k \in C^0(S^1)$ be defined by
\begin{equation} \label{eq1 no distr lemma prop}
\hat{w}_k = 1 - \frac{1}{q_k} \sum_{i = 0}^{q_k - 1} D f^i \, .
\end{equation}

Observe that
\[
\int_{S^1} \, D f^i \, d\Leb = 1\,,
\]
so
\begin{equation} \label{eq2 no distr lemma prop}
\int_{S^1} \, \hat{w}_k \, d\Leb = 1 - \frac{1}{q_k} \sum_{i = 0}^{q_k - 1} \, \int_{S^1} \, D f^i \, d\Leb = 1 - 1 = 0.
\end{equation}
Furthermore,
\begin{equation} \label{eq3 no distr lemma prop}
\begin{split}
&\quad \normof{\left[(\hat{w}_k \circ f) D f - \hat{w}_k\right] - (D f - 1)}_{C^0} \\
& = \normof{\left[\left( Df - \frac{1}{q_k} \sum_{i = 0}^{q_k - 1} (D f^i \circ f) D f \right) - 1 + \frac{1}{q_k} \sum_{i = 0}^{q_k - 1} D f^i \right] - D f + 1}_{C^0} \\
&= \frac{1}{q_k} \normof{1 - D f^{q_k}}_{C^0} \leq \frac{1 + \normof{D f^{q_k}}_{C^0}}{q_k} \, .
\end{split}
\end{equation}

Now, from Lemma \ref{lemma Dfqn},
\begin{equation} \label{eq4 no distr lemma prop}
\normof{\left[(\hat{w}_k \circ f) D f - \hat{w}_k\right] - (D f - 1)}_{C^0} \leq \frac{1 + \normof{D f^{q_k}}_{C^0}}{q_k} \leq 
\frac{1 + C_1}{q_k} \longrightarrow 0
\end{equation}
since $q_k \longrightarrow \infty$. Equations \eqref{eq4 no distr lemma prop} and \eqref{eq2 no distr lemma prop} prove the lemma.
\end{proof}

\begin{remark} \label{rem Dfqn distr} The main difference between the proofs of Lemma \ref{no distr lemma prop} above (in the critical case) and the corresponding lemma in ~\cite[p. 317]{NT} (in the diffeomorphism case) is the use of Lemma \ref{lemma Dfqn} to bound $\normof{D f^{q_n}}_{C^0}$, instead of the standard Denjoy inequality (see \cite[Section 3.2]{dFG:book}). Furthermore, in our case the $\hat{w}_k$ are not only continuous, but in fact $C^2$, since we require $f$ to be at least $C^3$.
\end{remark}

The proof of Proposition \ref{no distr crucial prop} given below depends essentially on the crucial fact that, if $L \in C^0(S^1)^\ast$ satisfies
\begin{equation} \label{no distr automorphic functional}
\innerprod{L}{(\phi \circ f) D f - \phi} = 0
\end{equation}
for all $\phi \in C^0(S^1)$, then $L$ is a scalar multiple of Lebesgue measure. In the diffeomorphism case, this fact is a consequence of ~\cite[Th\'eor\`eme 1]{DY}, while in the critical case, it follows from Theorem \ref{exist-unique} (recall Corollary \ref{signed uniqueness}).

\begin{proof}[Proof of Proposition \ref{no distr crucial prop}] The proof will result from two claims.

{\it Claim \#1:} There is a sequence $\listset{\bar{w}_n}_{n \geq 1} \subset C^0(S^1)$ such that
\begin{equation} \label{eq1 no distr prop}
(\bar{w}_n \circ f) D f - \bar{w}_n \longrightarrow u'
\end{equation}
uniformly. 

Indeed, consider the continuous linear operator $U_1 \colon C^0(S^1) \to C^0(S^1)$ given by
\begin{equation} \label{eq2 no distr prop}
U_1 \, w = (w \circ f) D f - w
\end{equation}
and let $M$ be the image of $U_1$. If no sequence $\bar{w}_n$ satisfying \eqref{eq1 no distr prop} exists, then $u' \notin \closure{M}{0}$, so the Hahn-Banach separation theorem implies the existence of a linear functional $L \in C^0(S^1)^\ast$ such that $L$ is identically null on $M$ and $\innerprod{L}{u'} = 1$. But the fact that $L$ is null on $M$ is easily seen to be equivalent to \eqref{no distr automorphic functional}, so $L$ must be a multiple of Lebesgue measure; this contradicts the fact that $\int_{S^1} \, u' \, d\Leb = 0$ (since $u$ is $\Z$-periodic). Thus, a sequence $\listset{\bar{w}_n}_{n \geq 1}$ satisfying \eqref{eq1 no distr prop} must exist. For each $n \geq 1$, let
\[
c_n \eqdef \int_{S^1} \, \bar{w}_n \, d\Leb\,, \quad\quad \tilde{w}_n \eqdef \bar{w}_n - c_n\,.
\]
and choose $k_n \in \N$ such that
\[
	\absol{c_n} \normof{\left[(\hat{w}_{k_n} \circ f) D f - \hat{w}_{k_n}\right] - (D f - 1)}_{C^0} \leq 2^{-n} 
\]
(Lemma \ref{no distr lemma prop} guarantees that this is possible). Finally, define
\begin{equation} \label{eq3 no distr prop}
w_n \eqdef \tilde{w}_n + c_n \hat{w}_{k_n} = \bar{w}_n + c_n (\hat{w}_{k_n} - 1) \, .
\end{equation}
Observe that $\int_{S^1} \, w_n \, d\Leb = 0$.

{\it Claim \# 2:}
\begin{equation} \label{eq4 no distr prop}
(w_n \circ f) D f - w_n \longrightarrow u'
\end{equation}
uniformly. Observe that proving claim \# 2 will finish the proof.

To prove the claim, let $\epsilon > 0$ be arbitrary, and choose $n_0 \in \N$ such that $2^{-n_0} < \frac{\epsilon}{2}$ and
\[
	\normof{(\bar{w}_n \circ f) D f - \bar{w}_n - u'}_{C^0} < \frac{\epsilon}{2}
\]
for all $n \geq n_0$.

Thus, for all $n \geq n_0$,
\begin{equation} \label{eq5 no distr prop}
\begin{split}
&\quad \normof{(w_n \circ f) D f - w_n - u'}_{C^0} \\
&\leq \normof{(\bar{w}_n \circ f) D f - \bar{w}_n - u'}_{C^0} + \absol{c_n} \normof{\left[(\hat{w}_{k_n} - 1) \circ f \right] D f - (\hat{w}_{k_n} - 1)}_{C^0} \\
&< \frac{\epsilon}{2} + \absol{c_n} \normof{\left[(\hat{w}_{k_n} \circ f) D f - \hat{w}_{k_n}\right] - (D f - 1)}_{C^0} \\
&\leq \frac{\epsilon}{2} + 2^{-n} < \frac{\epsilon}{2} + \frac{\epsilon}{2} = \epsilon
\end{split}
\end{equation}
which proves the claim.
\end{proof}

We have thus proved Theorem \ref{no invar distr}.

\subsection{Wandering intervals and invariant distributions}\label{ProofThmC} Before entering the proof of Theorem \ref{denjoy-koksma}, let us briefly explain why Theorem \ref{TeoCexamples} holds.

For any given irrational number $\rho\in(0,1)$, Hall was able to construct in \cite{Hall} a $C^{\infty}$ homeo\-morphism $f \colon S^1 \to S^1$, with rotation number $\rho(f)=\rho$, having a wandering interval~$I$ (\emph{i.e.}, $I$ is an open interval such that $f^n(I)$ is disjoint from $f^m(I)$ whenever $n \neq m$ in $\mathbb{Z}$). These examples, so-called \emph{Hall's examples}, present a single critical point $c$ which is \emph{flat}: the successive derivatives of $f$ (of all orders) vanish at $c$. Note that this critical point necessarily belongs to the invariant Cantor set of $f$ (otherwise, a $C^{\infty}$ perturbation supported on the wandering interval containing $c$ would produce a $C^{\infty}$ diffeomorphism with irrational rotation number and wandering intervals). In particular, $f^n \colon I \to f^n(I)$ is a diffeomorphism for all $n\in\mathbb{Z}$.

To these Hall's examples, we will apply the following general remark.

\begin{lemma}\label{leminhaconvmonot} Let $f \colon S^1 \to S^1$ be an orientation-preserving homeomorphism with a wandering interval $I \subset S^1$ such that $f^n \colon I \to f^n(I)$ is a $C^1$ diffeomorphism for all $n\in\mathbb{Z}$. Then, the series
\[
\sum_{n\in\mathbb{Z}}Df^n(x)
\]
is finite for Lebesgue almost every $x \in I$.
\end{lemma}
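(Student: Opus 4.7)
The plan is to apply Tonelli's theorem (or monotone convergence on positive terms) to interchange the integral over $I$ with the summation over $n \in \mathbb{Z}$, and then use the wandering property of $I$ to bound the resulting series.

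First I would observe that, since $f^n \colon I \to f^n(I)$ is an orientation-preserving $C^1$ diffeomorphism for every $n \in \mathbb{Z}$, the change of variables formula yields
\[
\int_I Df^n(x)\,dx \;=\; \bigl|f^n(I)\bigr|
\]
for each $n \in \mathbb{Z}$. Summing these positive quantities and invoking Tonelli's theorem,
\[
\int_I \sum_{n \in \mathbb{Z}} Df^n(x)\,dx \;=\; \sum_{n \in \mathbb{Z}} \bigl|f^n(I)\bigr|.
\]
Since $I$ is a wandering interval, the intervals $\{f^n(I)\}_{n \in \mathbb{Z}}$ are pairwise disjoint and all contained in $S^1$, so
\[
\sum_{n \in \mathbb{Z}} \bigl|f^n(I)\bigr| \;\leq\; \bigl|S^1\bigr| \;<\; \infty.
\]
Consequently, the non-negative function $x \mapsto \sum_{n \in \mathbb{Z}} Df^n(x)$ is Lebesgue integrable on $I$, and therefore finite at Lebesgue-almost every point of $I$.

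This argument is almost entirely formal and no step presents a genuine obstacle; the only point requiring mild care is making sure the change-of-variables identity $\int_I Df^n = |f^n(I)|$ is applied correctly for negative $n$ (for which $Df^n$ is understood as the derivative of $f^{n} = (f^{-1})^{|n|}$ on $I$, which is $C^1$ by hypothesis and orientation-preserving since $f$ is). No use of the smoothness, criticality, or rotation-number properties of $f$ is needed for this lemma: the wandering hypothesis alone supplies the summability $\sum_n |f^n(I)| \leq 1$ that drives the whole argument.
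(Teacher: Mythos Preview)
Your proof is correct and follows essentially the same route as the paper: swap the sum and the integral by monotone convergence (Tonelli), use the change-of-variables $\int_I Df^n = |f^n(I)|$, bound $\sum_n |f^n(I)| \leq 1$ via the wandering hypothesis, and conclude almost-everywhere finiteness from integrability. The paper's version phrases the final step through the level sets $A_\ell = \{x \in I : \sum_n Df^n(x) \geq \ell\}$ and Markov's inequality ($\Leb(A_\ell) \leq 1/\ell$), but this is just an explicit rendering of ``integrable implies finite a.e.''---your direct formulation is equivalent and slightly more streamlined.
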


\begin{proof} Fix some $\ell\in\mathbb{N}$, and consider the set
\[
A_{\ell}=\big\{x \in I:\,\sum_{n\in\mathbb{Z}}Df^n(x)\geq\ell\big\}\,.
\]
We claim that $\Leb(A_{\ell}) \leq 1/\ell$, where $\Leb$ denotes the Lebesgue measure on $S^1$. Indeed, by the Monotone Convergence Theorem (recall here that $Df$ is non-negative on the whole circle),
\[
\int_{A_{\ell}}\,\sum_{n\in\mathbb{Z}}Df^n\,\,d\Leb\leq\int_{I}\,\sum_{n\in\mathbb{Z}}Df^n\,\,d\Leb=\sum_{n\in\mathbb{Z}}\,\int_{I}\,Df^n\,\,d\Leb\,.
\]
But
\[
\sum_{n\in\mathbb{Z}}\,\int_{I}\,Df^n\,\,d\Leb=\sum_{n\in\mathbb{Z}}\,\big|f^{n}(I)\big| \leq 1,
\]
since $I$ is a wandering interval and $f^n \colon I \to f^n(I)$ is a $C^1$ diffeomorphism for all $n\in\mathbb{Z}$. Thus, we have that
\[
\int_{A_{\ell}}\,\sum_{n\in\mathbb{Z}}Df^n\,\,d\Leb \leq 1\,,
\]
and then $\Leb(A_{\ell}) \leq 1/\ell$ for all $\ell \geq 1$, as claimed. The claim clearly implies the lemma.
\end{proof}

\medskip

\begin{proof}[Proof of Theorem \ref{TeoCexamples}] Let $f \colon S^1 \to S^1$ be a Hall's example as above, having a wandering interval $I \subset S^1$. Pick some $x \in I$ such that $S=\sum_{n\in\mathbb{Z}}Df^n(x)$ is finite (recall that, by Lemma \ref{leminhaconvmonot}, this series is finite for Lebesgue almost every $x$ outside the $f$-invariant Cantor set). Following \cite[Section 3.1]{DY}, we consider the probability measure
\[
\nu=\frac{1}{S}\,\sum_{n\in\mathbb{Z}}Df^n(x)\,\delta_{f^n(x)}\,.
\]
We see at once that $\nu$ is $f$-automorphic of exponent $1$ (note, in particular, that the \emph{uniqueness} part of Theorem \ref{exist-unique} also breaks down if we remove the non-flatness condition on the critical points of $f$). Now we consider $T\in\Distr{S^1}{1}$ given by
\[
\innerprod{T}{u}=\int_{S^1}u'\,d\nu.
\]
As explained in Remark \ref{Sec8remauto} above, the distribution $T$ is $f$-invariant. Finally, to prove that $T$ is \emph{not} a scalar multiple of the unique $f$-invariant probability measure $\mu$ is straightforward (compare \cite[Section 3]{NT}). Indeed, let $u \colon S^1 \to \mathbb{R}$ be of class $C^1$, supported on the wandering interval $I$, and such that $x$ is not a critical point of $u$. Then, on one hand, we have
\[
\innerprod{T}{u}=\int_{S^1}u'\,d\nu=\frac{1}{S}\,\,u'(x) \neq 0.
\]
On the other hand, since the support of $u$ is disjoint from the non-wandering set of $f$, we certainly have $\int_{S^1}u\,d\mu=0$. This finishes the proof of Theorem \ref{TeoCexamples}.
\end{proof}

\subsection{Denjoy-Koksma inequality improved}\label{ProofThmD} We finish this paper by proving that Theorem \ref{exist-unique} implies Theorem \ref{denjoy-koksma}. In fact, we will present two different proofs of Theorem \ref{denjoy-koksma}. The first proof works only when the observable $\phi$ is of class $C^1$, whereas the second works in the general case, \emph{i.e.}, when $\phi$ is absolutely continuous with respect to Lebesgue. The former follows \cite[pages 513--514]{AK}, and relies on the absence of invariant distributions of order $1$ (obtained in Theorem \ref{no invar distr}), while the latter follows \cite[pages 379--381]{NavasIMRN23}, and it only uses the ergodicity of the Lebesgue measure under a multicritical circle map (as established in Theorem \ref{ergodicity}). 

\begin{proof}[Proof of Theorem \ref{denjoy-koksma} for $C^1$ observables] For any given $\phi \in C^1(S^1)$ note that $\phi-\int_{S^1}\phi\,d\mu$ belongs to $\ker\mu$. Combining Theorem \ref{no invar distr} with Proposition \ref{DUE criterion} we have that, for any given $\varepsilon>0$, there exists $u \in C^1(S^1)$ such that
\[
\left\|\big(u \circ f-u\big)-\big(\phi-\int_{S^1}\phi\,d\mu\big)\right\|_{C^1}\leq\frac{\varepsilon}{2}\,.
\]
Let $\tilde{\phi} \in C^1(S^1)$ be given by $\tilde{\phi}=u \circ f-u+\int_{S^1}\phi\,d\mu$\,, so that $\big\|\tilde{\phi}-\phi\big\|_{C^1}\leq\varepsilon/2$. Since $\mu$ is $f$-invariant, we have $\int_{S^1}\tilde{\phi}\,d\mu=\int_{S^1}\phi\,d\mu$. Now for any given $x \in S^1$,
\begin{align*}
\left|\sum_{i=0}^{q_n-1}\phi\big(f^i(x)\big)-q_n\int_{S^1}\phi\,d\mu\right|&\leq\left|\sum_{i=0}^{q_n-1}\big(\phi-\tilde{\phi}\big)\big(f^i(x)\big)-q_n\int_{S^1}\big(\phi-\tilde{\phi}\big)\,d\mu\right|\\
&+\left|\sum_{i=0}^{q_n-1}\tilde{\phi}\big(f^i(x)\big)-q_n\int_{S^1}\tilde{\phi}\,d\mu\right|.
\end{align*}
Let us estimate both terms at the right side of this inequality. On one hand, by the standard Denjoy-Koksma inequality,
\[
\left|\sum_{i=0}^{q_n-1}\big(\phi-\tilde{\phi}\big)\big(f^i(x)\big)-q_n\int_{S^1}\big(\phi-\tilde{\phi}\big)\,d\mu\right|\leq\big\|\tilde{\phi}-\phi\big\|_{C^1}\leq\varepsilon/2\,.
\]
On the other hand,
\begin{align*}
\sum_{i=0}^{q_n-1}\tilde{\phi}\big(f^i(x)\big)-q_n\int_{S^1}\tilde{\phi}\,d\mu&=\sum_{i=0}^{q_n-1}\left[u\big(f^{i+1}(x)\big)-u\big(f^{i}(x)\big)+\int_{S^1}\phi\,d\mu\right]-q_n\int_{S^1}\tilde{\phi}\,d\mu\\
&=u\big(f^{q_n}(x)\big)-u(x)+q_n\left(\int_{S^1}\phi\,d\mu-\int_{S^1}\tilde{\phi}\,d\mu\right)\\
&=u\big(f^{q_n}(x)\big)-u(x)\,. 
\end{align*}
In particular,
\[
\left|\sum_{i=0}^{q_n-1}\tilde{\phi}\big(f^i(x)\big)-q_n\int_{S^1}\tilde{\phi}\,d\mu\right|\leq\|u\|_{C^1}\,\|f^{q_n}-\Id\|_{C^0}\,.
\]
By minimality of $f$, we can choose $n_0\in\mathbb{N}$ such that $\|u\|_{C^1}\,\|f^{q_n}-\Id\|_{C^0}<\varepsilon/2$ for all $n \geq n_0$. Therefore, $\left|\sum_{i=0}^{q_n-1}\phi\big(f^i(x)\big)-q_n\int_{S^1}\phi\,d\mu\right|<\varepsilon$ for all $x \in S^1$ and $n \geq n_0$. Since $\varepsilon$ is arbitrary, this finishes the proof.
\end{proof}

Let us now give a proof of Theorem \ref{denjoy-koksma} that works in general, following \cite[Section 2]{NavasIMRN23}. With this purpose, we will need the following lemma.

\begin{lemma} \label{DK key lemma} If $v \in L^1(\Leb)$ is a Lebesgue-integrable function on the circle such that $\int_{S^1} \, v \, d\Leb = 0$, then there exists a sequence $v_n$ of Lebesgue-integrable functions on the circle such that $\int_{S^1} \, v_n \, d\Leb = 0$ for all $n$ and
\[
	(v_n \circ f) D f - v_n \longrightarrow v
\]
in the $L^1$ sense.
\end{lemma}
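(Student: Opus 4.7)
The plan is a standard Hahn-Banach duality argument, with the crucial input being the ergodicity of Lebesgue measure under $f$ granted by Theorem~\ref{ergodicity} and Corollary~\ref{lebesgue is unique}. Let $V=\{w\in L^1(\Leb):\int_{S^1}w\,d\Leb=0\}$ and define the continuous linear operator $U\colon L^1(\Leb)\to L^1(\Leb)$ by $U(w)=(w\circ f)Df-w$. The first observation is that $U(V)\subseteq V$: indeed, since $f$ is an orientation-preserving absolutely continuous circle homeomorphism, the pullback measure $f^{\ast}\Leb$ equals $Df\cdot\Leb$, so the change-of-variables formula $\int h\,Df\,d\Leb=\int (h\circ f^{-1})\,d\Leb$ holds for every $h\in L^1(\Leb)$; applied with $h=w\circ f$ this yields $\int(w\circ f)Df\,d\Leb=\int w\,d\Leb$, whence $\int U(w)\,d\Leb=0$. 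The goal then reduces to proving that $U(V)$ is dense in $V$ in the $L^1$ topology; given $v\in V$, density will furnish $v_n\in V$ with $U(v_n)\to v$ in $L^1$, which is precisely what is claimed.

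To prove the density, by Hahn-Banach (identifying $L^1(\Leb)^{\ast}$ with $L^{\infty}(\Leb)$) it suffices to show that any $\phi\in L^{\infty}(\Leb)$ satisfying
\[
\int_{S^1}\phi\,[(w\circ f)Df-w]\,d\Leb=0\quad\text{for all }w\in V
\]
must be Leb-essentially constant (constants are precisely the annihilator of $V$). Applying the same change of variables to the first term, the identity rewrites as $\int_{S^1}(\phi\circ f^{-1}-\phi)\,w\,d\Leb=0$ for all $w\in V$, so $\phi\circ f^{-1}-\phi$ is constant Leb-a.e., say equal to $c\in\R$. Iterating gives $\phi\circ f^{-n}-\phi=nc$ Leb-a.e., which is impossible for $c\neq 0$ because $\phi\in L^{\infty}$; hence $c=0$ and $\phi$ is $f$-invariant modulo Leb-null sets.

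Finally, Lebesgue measure is $f$-automorphic of exponent $1$, so Corollary~\ref{lebesgue is unique} identifies it with $\mu_1$ and Theorem~\ref{ergodicity} says it is ergodic under $f$. Consequently every $f$-invariant $L^{\infty}$ function is constant, so $\phi$ is constant, concluding the Hahn-Banach step and therefore the proof.

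The main subtlety (rather than a true obstacle) is the change-of-variables identity $\int h\,Df\,d\Leb=\int(h\circ f^{-1})\,d\Leb$ in the presence of critical points where $Df$ vanishes and $f^{-1}$ is merely H\"older; this is guaranteed by the absolute continuity of the $C^3$ homeomorphism $f$ (equivalently, by the identity $f^{\ast}\Leb=Df\cdot\Leb$ between Radon measures), and no fine behaviour near $\crit{f}$ is needed beyond that.
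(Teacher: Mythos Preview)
Your argument is correct and is, at its core, the same Hahn--Banach/ergodicity strategy as the paper's proof, but you organise the endgame more economically. The paper first shows that $v$ lies in the $L^1$-closure of $U(L^1)$ (annihilator $\Rightarrow$ $f$-invariant $\Rightarrow$ constant, contradiction with $\int v\,d\Leb=0$), which yields approximants $\bar w_n\in L^1$ whose means need not vanish; it then invokes the auxiliary sequence $\hat w_k$ from Lemma~\ref{no distr lemma prop} (built from the bound $\|Df^{q_k}\|_{C^0}\leq C_1$) to correct the means. You instead restrict $U$ to the closed hyperplane $V=\ker\Leb$ from the start, so the approximants automatically have zero mean; the price is that the annihilator condition only gives $\phi\circ f^{-1}-\phi=c$ constant rather than $0$, but you dispose of $c$ with the elementary observation that $\phi\circ f^{-n}-\phi=nc$ is unbounded unless $c=0$. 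This bypasses Lemma~\ref{no distr lemma prop} entirely and hence does not use the bound on $\|Df^{q_n}\|_{C^0}$ at this stage. The paper's route has the (minor) advantage of running exactly parallel to the $C^0$ argument in Proposition~\ref{no distr crucial prop}, while yours is shorter and uses only the ergodicity of Lebesgue.
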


\begin{proof} Consider the continuous linear operator $U \colon L^1(\Leb) \to L^1(\Leb)$ given by
$U w = (w \circ f) D f - w$, and let $M$ be the image of $U$. First, assume that $v \notin \closure{M}{}$; then, by the Hahn-Banach theorem, there exists $L \in L^1(\Leb)^\ast$ such that $L$ is identically null on $M$ and $\innerprod{L}{v} = 1$. By identification of $L^1(\Leb)^\ast$ with $L^\infty(\Leb)$, there exists an $L^\infty$ function $\phi$ such that
\[
	\innerprod{L}{w} = \int_{S^1} \, \phi \, w \, d\Leb
\]
for all $w \in L^1(\Leb)$. But then, for all $\psi = U w \in M$,
\begin{equation*}
\begin{split}
	0 &= \int_{S^1} \, \phi \, \left[ (w \circ f) D f - w \right] \, d\Leb
	= \int_{S^1} \, (\phi \circ f^{-1}) \, w \, d\Leb - \int_{S^1} \, \phi \, w \, d\Leb\,,
\end{split}
\end{equation*}
where we have used the fact that Lebesgue measure is $f$-automorphic of exponent $1$. 

Since the previous equality holds for all Lebesgue-integrable $w$, we conclude that $\phi$ is $f$-invariant $\Leb$-almost everywhere. As proved in Section \ref{proof ergodic}, any multicritical circle map with irrational rotation number is ergodic with respect to the Lebesgue measure. Therefore, $\phi$ must be almost everywhere constant, \emph{i.e.}, there exists some constant $\beta$ such that $\phi = \beta$ $\Leb$-almost everywhere. But then
\[
1 = \innerprod{L}{v} = \int_{S^1} \, v \, \phi \, d\Leb = \beta \, \int_{S^1} \, v \, d\Leb\,,
\]
contradicting the fact that $v \in \ker \Leb$. Thus, $v \in \closure{M}{}$, and there is some sequence $\bar w_n$ of $L^1$ functions for which 
\[
	(\bar w_n \circ f) D f - \bar w_n \longrightarrow v
\]
in the $L^1$ sense.

Now, recall the functions $\hat{w}_k$ from Lemma \ref{no distr lemma prop}: we have that
\[
	(\hat{w}_k \circ f) Df - \hat{w}_k \longrightarrow D f - 1
\]
uniformly. If we define $c_n \eqdef \int_{S^1} \, \bar w_n \, d\Leb$, then the desired sequence $w_n$ is given by
\[
	w_n \eqdef \bar w_n - c_n + c_n \, \hat{w}_{k_n}\,,
\]
where the $k_n$ are chosen as in the proof of Proposition \ref{no distr crucial prop}.
\end{proof}

\begin{proof}[Proof of Theorem \ref{denjoy-koksma}] For any given $\phi \in AC(S^1)$, we have that its derivative $v \eqdef D \phi$ exists $\Leb$-almost everywhere, and furthermore, $\int_{S^1} \, v \, d\Leb = 0$ (since $\phi$ is $\Z$-periodic). Let $\epsilon > 0$ be fixed. By Lemma \ref{DK key lemma}, there exists $w \in L^1(\Leb)$, $w \in \ker \Leb$, such that
\begin{equation} \label{eq1 DK}
	u \eqdef v - \left[ (w \circ f) D f - w \right]
\end{equation}
satisfies
\begin{equation} \label{eq2 DK}
	\int_{S^1} \, \absol{u} \, d\Leb \leq \frac{\epsilon}{2}.
\end{equation}

Now, let $\psi, \xi \in AC(S^1)$ be given by
\begin{equation*}
	\xi(x) = \int_{[0, x]} \, u \, d\Leb - \int_{S^1} \, \phi \, d\mu, \quad \psi(x) = \int_{[0, x]} \, w \, d\Leb\,,
\end{equation*}
so that $\int_{S^1} \, \xi \, d\mu = 0$ and
\begin{equation} \label{eq3 DK}
\xi = \phi - (\psi \circ f) + \psi - \int_{S^1} \, \phi \, d\mu\,,
\end{equation}
or equivalently,
\begin{equation} \label{eq4 DK}
\phi = \xi + (\psi \circ f) - \psi + \int_{S^1} \, \phi \, d\mu.
\end{equation}

By a telescoping sum, we have
\begin{equation} \label{eq5 DK}
\sum_{i=0}^{q_n-1} \, \phi \circ f^i - q_n \int_{S^1} \, \phi \, d\mu = \sum_{i=0}^{q_n-1} \, \xi \circ f^i + \psi \circ f^{q_n} - \psi.
\end{equation}
We now proceed to estimate both terms on the right hand side of the above equation. On the one hand, by the standard Denjoy-Koksma inequality,
\begin{equation} \label{eq6 DK}
\normof{\sum_{i=0}^{q_n-1} \, \xi \circ f^i}_{C^0} \leq \varia(\xi) \leq \normof{D \xi}_{L^1(\Leb)} = \normof{u}_{L^1(\Leb)} \leq \frac{\epsilon}{2}.
\end{equation}
On the other hand, by the minimality of $f$,
\begin{equation} \label{eq7 DK}
\normof{\psi \circ f^{q_n} - \psi}_{C^0} \leq \frac{\epsilon}{2}
\end{equation}
for sufficiently large $n$. By the triangle inequality, equations \eqref{eq5 DK}-\eqref{eq7 DK} together show that, for sufficiently large $n$,
\begin{equation} \label{eq8 DK}
\normof{\sum_{i=0}^{q_n-1} \, \phi \circ f^i - q_n \int_{S^1} \, \phi \, d\mu}_{C^0} \leq \epsilon.
\end{equation}
Since $\epsilon$ is arbitrary, this concludes the proof of Theorem \ref{denjoy-koksma}.
\end{proof}

\section*{Acknowledgements}
We would like to thank Andr\'es Navas and Michele Triestino for pointing to us the recent paper \cite{NavasIMRN23}. 


\bibliography{CircleDyn}

\bibliographystyle{amsplain}


\end{document}